\newcounter{countclaim}
\date{} \textwidth 16cm \textheight 22cm \topmargin 0 cm \hoffset
\newtheorem{theorem}{Theorem}[section]
\newtheorem{lemma}[theorem]{Lemma}
\newtheorem{definition}[theorem]{Definition}
\newtheorem{corollary}[theorem]{Corollary}
\title{The rank of a complex unit gain graph\\ in terms of the matching number}
\author{Shengjie  He$^{\rm a}$, Rong-Xia Hao$^{\rm a}$\footnote{Corresponding author.
Emails: he1046436120@126.com (Shengjie  He), rxhao@bjtu.edu.cn (Rong-Xia Hao), fengming.dong@nie.edu.sg  (Fengming Dong)}, Fengming Dong$^{\rm b}$\\
{\small\em $^{\rm a}$Department of Mathematics, Beijing Jiaotong University, Beijing,
100044, China}\\
{\small\em $^{\rm b}$National Institute of Education,
  Nanyang Technological University, Singapore}\\
}
\begin{document}
\baselineskip 0.55cm \maketitle

\begin{abstract}

A complex unit gain graph (or ${\mathbb T}$-gain graph) is a triple
$\Phi=(G, {\mathbb T}, \varphi)$ (or $(G, \varphi)$ for short) consisting
of a simple graph $G$, as the underlying graph of $(G, \varphi)$, the set
of unit complex numbers $\mathbb{T}= \{ z \in C:|z|=1 \}$
and a gain function $\varphi: \overrightarrow{E} \rightarrow \mathbb{T}$ with
the property that $\varphi(e_{i,j})=\varphi(e_{j,i})^{-1}$.
In this paper, we prove that
$2m(G)-2c(G) \leq r(G, \varphi) \leq 2m(G)+c(G)$,
where $r(G, \varphi)$, $m(G)$ and $c(G)$ are the rank of the Hermitian
adjacency matrix $H(G, \varphi)$, the matching number and the cyclomatic number
of $G$, respectively. Furthermore, the complex unit gain graphs
$(G, \mathbb{T}, \varphi)$ with $r(G, \varphi)=2m(G)-2c(G)$ and $r(G, \varphi)=2m(G)+c(G)$
are characterized. These results generalize the corresponding known results about
undirected graphs, mixed graphs and signed graphs. Moreover,
we show that $2m(G-V_{0}) \leq r(G, \varphi) \leq 2m(G)+b(G)$
holds for any subset $V_0$ of $V(G)$ such that $G-V_0$ is acyclic and
$b(G)$ is the minimum integer $|S|$ such that $G-S$ is bipartite
for $S \subset V(G)$.

\end{abstract}

{\bf Keywords}: Complex unit gain graph; rank; matching number; cyclomatic number.

{\bf MSC}: 05C50

\section{Introduction}

Throughout this paper, we only consider the simple graphs, i.e., without multiedges and loops.
Let $G=(V,E)$ be a simple graph with
$V=\{v_{1}, v_{2}, \cdots, v_{n} \}$.
Whenever $v_iv_j\in E$,
let $e_{i,j}$ denote the ordered pair $(v_i,v_j)$.
Thus $e_{i,j}$ and $e_{j,i}$ are considered to be distinct.
Let $\overrightarrow{E}$ denote the set of
$\{e_{i,j},e_{j,i}: v_iv_j\in E\}$.
Clearly $|\overrightarrow{E}|=2|E|$.
Let $\mathbb{T}$ be the set of all
complex numbers $z$ with $|z|=1$
and let $\varphi$ be an arbitrary  mapping $\varphi:
\overrightarrow{E} \rightarrow \mathbb{T}$ such that  $\varphi(e_{i,j})=\varphi(e_{j,i})^{-1}$
whenever $v_iv_j\in E$.
We call $\Phi=(G, \mathbb{T}, \varphi)$
a complex unit gain graph.
%=\overline{\varphi(e_{ji})}$.
For convenience, we write $(G, \varphi)$ for
a complex unit gain graph $\Phi=(G, \mathbb{T}, \varphi)$ in this paper. We refer to \cite{BONDY} for undefined terminologies and notations.

\iffalse
A $complex$ $unit$ $gain$ $graph$ (or $\mathbb{T}$-gain graph) is a graph with the additional structure that
each orientation of an edge is given a complex unit, called a $gain$, which is the inverse of the
complex unit assigned to the opposite orientation. For a simple graph $G$ with with vertex set $V(G)=\{ v_{1}, v_{2}, \cdots, v_{n} \}$,
let $\overrightarrow{E}$ be the set of oriented edges, it is obvious that this set contains two copies of each edge with opposite directions. We write $e_{ij}$ for the oriented edge from $v_{i}$ to $v_{j}$. The circle group, which is denoted by $\mathbb{T}= \{ z \in C:|z|=1 \}$, is a subgroup of the multiplicative group of all
nonzero complex numbers $\mathbb{C}^{\times}$.
A complex unit gain graph is
a triple $\Phi=(G, \mathbb{T}, \varphi)$ consisting of a graph $G$, $\mathbb{T}= \{ z \in C:|z|=1 \} $ is a subgroup of the multiplicative group of all nonzero complex numbers $\mathbb{C}^{\times}$ and a gain function $\varphi: \overrightarrow{E} \rightarrow \mathbb{T}$, where $G$ is the underlying graph of $\Phi$ and   $\varphi(e_{ij})=\varphi(e_{ji})^{-1}=\overline{\varphi(e_{ji})}$.
For convenience, we write $(G, \varphi)$ for a complex unit gain graph $\Phi=(G, \mathbb{T}, \varphi)$ in this paper. We refer to \cite{BONDY} for undefined terminologies and notation.
\fi

The {\it adjacency matrix} $A(G)$ of $G$ is the $n \times n$ matrix $(c_{i,j})$,
where $c_{i,j}=1$ whenever $v_iv_j\in E$,
and $c_{i,j}=0$  otherwise.
%whose $(i, j)$-entry equals to1 if vertices $v_{i}$ and $v_j$ are adjacent and 0 otherwise.
The adjacency matrix associated to the complex unit gain graph $(G, \varphi)$ is the $n \times n$ complex matrix $H(G, \varphi)=(a_{i,j})$, where
$a_{i,j}=\varphi(e_{i,j})$ whenever $v_iv_j\in E$,
and $a_{i,j}=0$  otherwise.

%if $v_{i}$ is adjacent to $v_{j}$, otherwise $a_{ij}=0$. It is obvious to see
Observe that $H(G, \varphi)$ is Hermitian
and its eigenvalues are
real \cite{WLG}.
The rank of the complex unit gain graph $(G, \varphi)$
is defined to be the rank of the matrix $H(G, \varphi)$,
denoted by $r(G,\varphi)$.
Thus, %the rank of the matrix $H(G, \varphi)$, denoted by
$r(G,\varphi)=p^{+}(G, \varphi)+n^{-}(G, \varphi)$,
where $p^{+}(G, \varphi)$ (resp. $n^{-}(G, \varphi)$),
called the
{\it positive inertia index}
(resp. the {\it negative inertia index})
of $(G, \varphi)$,
is the number of positive eigenvalues
(resp. negative eigenvalues) of
$H(G, \varphi)$.

The value $c(G)=|E(G)|-|V(G)|+\omega(G)$, is called the {\it cyclomatic number} of a graph $G$,
where $\omega(G)$ is the number of connected components of $G$.
A set of pairwise independent edges of $G$ is called a $matching$, while a matching with the maximum
cardinality is a $maximum$ $matching$ of $G$. The $matching$ $number$ of $G$, denoted by $m(G)$,
is the cardinality of a maximum matching of $G$.
For a complex unit gain graph $(G, \varphi)$, the matching number and cyclomatic number
of $(G, \varphi)$  are defined to be the matching number and cyclomatic number of its underlying graph, respectively. Denote by $P_n$ and $C_n$ a path and cycle on $n$ vertices,
respectively.

In recent years, the study on the complex unit gain graphs has received more and more attentions.
Lu et al. \cite{WLG} studied the relation between the rank of a complex unit gain graph and the rank of its underlying graph.
In \cite{YGH}, the positive inertia and negative inertia of a complex unit gain cycle were characterized by Yu et al.
In \cite{FYZUNIT}, Wang et al.
investigated the determinant of the Laplacian matrix of a complex unit gain graph.
In \cite{REFF},
Reff generalized some fundamental concepts from spectral graph theory to complex unit gain graphs and defined the adjacency, incidence and Laplacian matrices of them.

The study on the relation between the rank of graphs and other topological structure parameters has been a popular subject in the graph theory.
Mohar \cite{MOHAR} and Li \cite{LXL} introduced the Hermitian adjacency matrix of a mixed graph and presented some basic properties of the rank of the mixed graphs independently.
In \cite{WANGLONG}, Wang et al. characterized the relation among the rank and the matching number and the independence number of an undirected graph.
 In \cite{WDY},
Ma et al. investigated the relation between the skew-rank of an oriented graph and the matching number of its underlying graph.
The relation between the rank of a mixed graph and the
matching number was discussed by Li et al. \cite{LSC}.
%He et al. \cite{HSJ} characterized the relation among the rank,the matching number and the cyclomatic number of a signed graph.
Huang et al. \cite{HLSC} researched the relation between the skew-rank of an oriented graph and its independence number.
For other research of the rank of a graph, one may be referred to those in \cite{BEVI,GUT,MAH2,MOHAR2,RULA,WONGEUR}.

\iffalse

If the gain of every edge is 1 in $(G, \varphi)$, then the adjacency matrix $H(G, \varphi)$ is exactly the adjacency matrix $A(G)$ of the underlying graph $G$. It is obvious that a simple graph is assumed as a
complex unit gain graph with all positive gain 1's.
The $positive$ $inertia$ $index$, denoted by $p^{+}(G, \varphi)$, and the $negative$ $inertia$ $index$, denoted by $n^{-}(G, \varphi)$, of a complex unit gain graph $(G, \varphi)$ are defined to be the number of positive eigenvalues and negative eigenvalues of $H(G, \varphi)$ including multiplicities, respectively.
The $rank$ of a complex unit gain graph $(G, \varphi)$,
written as $r(G, \varphi)$, is defined to be the rank of $H(G, \varphi)$.
Obviously, $r(G, \varphi)=p^{+}(G, \varphi)+n^{-}(G, \varphi)$. Moreover, it can be easily find that the
Hermitian adjacency matrix of mixed graphs and the adjacency matrix of signed graphs are two special two forms
of the Hermitian adjacency matrix of complex unit gain graphs.
\fi

It is obvious that an undirected graph $G$ is just a complex unit gain graph $\Phi=(G,{\mathbb T},\varphi)$ with $\varphi(\overrightarrow{E})\subseteq  \{1\}$.
In \cite{WANGLONG}, Wang et al. researched the the relation among the rank, the matching number and the cyclomatic number of an undirected graph.

\begin{theorem}[\cite{WANGLONG}]\label{un1}
Let $G$ be a simple connected undirected graph. Then
$$
2m(G)-2c(G) \leq r(G) \leq 2m(G)+c(G).
$$
\end{theorem}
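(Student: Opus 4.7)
The plan is to prove both bounds simultaneously by induction on $|V(G)|+c(G)$, reducing from $G$ to a strictly smaller graph in each step.

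\textbf{Base case ($c(G)=0$).} $G$ is a tree, so the classical identity $r(G)=2m(G)$ holds and both bounds are immediate. This identity itself follows from a sub-induction on $|V(G)|$ using a pendant reduction: if $v$ is a pendant with unique neighbor $u$, row/column operations on $A(G)$ yield $r(G)=r(G-u-v)+2$, and $m(G)=m(G-u-v)+1$ since some maximum matching can be assumed to contain the pendant edge $uv$.

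\textbf{Inductive step.} If $G$ still has a pendant vertex, I apply the same pendant identity; since $c(G-u-v)\le c(G)$, the induction hypothesis on $G-u-v$ (applied componentwise, after extending the theorem additively across components) yields both inequalities for $G$, with slack when $c$ strictly drops. Otherwise $\delta(G)\ge 2$ and $G$ contains a cycle; I pick an edge $e$ on some cycle and set $G':=G-e$, which is still connected with $c(G')=c(G)-1$. The induction hypothesis supplies $2m(G')-2c(G')\le r(G')\le 2m(G')+c(G')$, and I combine this with the standard estimates $|r(G)-r(G')|\le 2$ (Weyl's inequality, since $A(G)-A(G')$ has rank two) and $m(G)-1\le m(G')\le m(G)$.

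The main obstacle is the $\delta(G)\ge 2$ case: the naive combination above leaves the upper bound $r(G)\le 2m(G)+c(G)+1$, off by one exactly when $m(G')=m(G)$. Closing this gap is the technical heart of the proof. It requires a careful selection of $e$ --- ideally an edge lying in every maximum matching of $G$, which forces $m(G')=m(G)-1$ --- or, when no cycle admits such an edge (as on odd cycles, where the bound is tight with equality), a refinement of the spectral estimate to $r(G)\le r(G')+1$ via a null-vector analysis of $A(G')$ at the endpoints of $e$. The lower bound $r(G)\ge 2m(G)-2c(G)$ is handled by an analogous case analysis. This delicate combinatorial/spectral interplay on cycles is the step I expect to absorb most of the work.
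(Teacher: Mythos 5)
Your base case and pendant reduction are fine, but both halves of the inductive step for graphs with $\delta(G)\ge 2$ have genuine gaps, and the two fixes you defer are exactly where the argument breaks. For the upper bound, the off-by-one you identify is real: on an odd cycle $C_n$ one has $r(C_n)=n=2m+c$ while $r(P_n)=n-1$ and $m(P_n)=m(C_n)$, so you genuinely need $r(G)\le r(G-e)+1$ in the bad case, and neither of your proposed remedies is carried out (an edge lying in every maximum matching need not exist on a cycle, and the ``null-vector analysis'' is a placeholder, not a proof). The paper sidesteps induction entirely for this direction: it writes $(-1)^k a_k$ as the sum of the $k\times k$ principal minors of the adjacency matrix, observes that the minor indexed by $S$ vanishes unless $G[S]$ has an elementary spanning subgraph (all components $K_2$ or cycles), and that the existence of such a subgraph forces $|S|\le 2m(G)+c(G)$; hence $a_k=0$ for all $k>2m(G)+c(G)$ and the rank bound follows with no case analysis on cycles at all.

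For the lower bound your situation is worse than you state: edge deletion gives only $r(G)\ge r(G-e)-2$, and combined with $m(G-e)\ge m(G)-1$ and $c(G-e)=c(G)-1$ the induction yields $r(G)\ge 2m(G)-2c(G)-2$, off by \emph{two}, which an ``analogous case analysis'' does not obviously repair. The correct move (and the paper's) is to delete a vertex $x$ lying on a cycle rather than an edge: since $G-x$ is an induced subgraph, $r(G-x)\le r(G)$ with no loss whatsoever, while $m(G-x)\ge m(G)-1$ and $c(G-x)\le c(G)-1$, so summing the induction hypothesis over the components of $G-x$ gives $r(G)\ge r(G-x)\ge 2(m(G)-1)-2(c(G)-1)=2m(G)-2c(G)$ exactly. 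The one-sided monotonicity of rank under passage to induced subgraphs is the key fact your edge-deletion scheme cannot access, and it is what makes the paper's induction (on $c(G)$ alone) close cleanly.
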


Let $T_G$ be the graph obtained from $G$
by contracting each cycle of $G$ into a vertex (called a {\it cyclic vertex}).
The the lower bound and the upper bound for $r(G)$ of the undirected graphs
$G$ in Theorem~\ref{un1} are obtained in \cite{SONG} and \cite{WANGL}, respectively.

\begin{theorem}[\cite{SONG}]\label{un2}
Let $G$ be a simple connected undirected graph.
Then $r(G)=2m(G)-2c(G)$ if and only if  all the following conditions hold for $G$:

{\em(i)} the cycles (if any) of $G$ are pairwise vertex-disjoint;

{\em(ii)} the length of each cycle (if any) of $G$ is a multiple of 4;

{\em(iii)} $m(T_{G})=m(G-O(G))$, where $O(G)$ is the set of vertices in cycles of $G$.
\end{theorem}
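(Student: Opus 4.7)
The plan is to prove both directions by induction on $|V(G)|$, using pendant-vertex pruning as the main reduction tool. Set $d(G) := r(G) - 2m(G) + 2c(G)$, which is nonnegative by Theorem~\ref{un1}; the goal is to characterise $d(G) = 0$. The key identities I would rely on are $r(G) = r(G-u-v) + 2$ together with $m(G) = m(G-u-v) + 1$ whenever $u$ is a pendant vertex with neighbor $v$, the tree identity $r(T) = 2m(T)$, and the explicit cycle ranks $r(C_n) = n - 2$ if $4 \mid n$, $r(C_n) = n - 1$ if $n$ is odd, and $r(C_n) = n$ if $n \equiv 2 \pmod 4$, so that $d(C_n) = 0$ precisely when $4 \mid n$.

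For sufficiency, assume (i)--(iii). The first step is to show that every pendant vertex $u$ of $G$ has its neighbor $v$ outside $O(G)$: if $v \in C_i$, then $u$ would be a pendant of $T_G$ attached to the cyclic vertex of $C_i$, yet $u$ is isolated in $G - O(G)$, so a maximum matching of $G - O(G)$ could be extended in $T_G$ by the edge to that cyclic vertex, forcing $m(T_G) \geq m(G-O(G)) + 1$ and contradicting (iii). Hence one may prune the pair $(u,v)$: the graph $G' := G - u - v$ satisfies $c(G') = c(G)$ and has the same cycle set as $G$, and enjoys $T_{G'} = T_G - u - v$ and $G' - O(G') = (G - O(G)) - u - v$, so all three conditions transfer to $G'$. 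Induction then gives $d(G) = d(G') = 0$. In the pendant-free base case every vertex of $G$ has degree at least two; condition (i) together with the connectedness of $G$ forces $G = C_{4k}$, and then $d(G) = 0$ by direct computation.

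For necessity I argue by contrapositive: assume $d(G) = 0$ and deduce (i)--(iii). The same pendant-pruning shows that if $(u,v)$ is a pendant pair with $v \notin O(G)$, then $d$ and the status of each of (i)--(iii) transfer unchanged to $G' = G - u - v$, so I may induct. The subtle step is when every pendant of $G$ has its neighbor on a cycle, or when $G$ has no pendant at all: a local analysis (using the explicit rank of a $C_{4k}$ with one pendant edge, and of two cycles linked by a bridge or a path) shows that each such configuration contributes strictly to $d$, so the assumption $d(G) = 0$ forces a structure in which the cycles are pairwise vertex-disjoint, each is a $C_{4k}$, and no bridge or path joins two cycles -- in turn delivering (i), (ii), and (iii).

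The main obstacle is the necessity of condition (iii). Its failure is purely matching-theoretic, and translating it into the spectral inequality $r(G) > 2m(G) - 2c(G)$ requires a careful alternating-path argument across the offending cyclic vertex together with precise rank control of a $C_{4k}$ attached by an extra edge to an otherwise fully matched forest. Identifying and analysing the correct induced subgraph that concretely witnesses $d > 0$ in this scenario is the principal technical step, and the one place where the argument cannot be absorbed into routine pendant-pruning.
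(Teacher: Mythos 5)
Your sufficiency argument follows essentially the same route the paper takes for its generalization (Theorem~\ref{T50}): use (iii) to show no pendant neighbour lies on a cycle, prune pendant pairs, and induct. But your base case is wrong as stated: a connected, pendant-free graph satisfying (i) need \emph{not} be a single cycle --- two vertex-disjoint $4$-cycles joined by a path has minimum degree $2$ and pairwise disjoint cycles. What eliminates such graphs is (iii), not (i) plus connectedness: if $G$ is pendant-free and its cycles are disjoint, every leaf of the tree $T_G$ is a cyclic vertex, so $G-O(G)$ is obtained from $T_G$ by deleting a set containing all of its pendant vertices, and Lemma~\ref{L20}(ii) then forces $m(G-O(G))<m(T_G)$, contradicting (iii). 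That step must be added. (Also, $r(C_n)=n$, not $n-1$, for odd $n$; this happens not to affect your conclusion that $d(C_n)=0$ iff $4\mid n$.)

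The genuine gap is in the necessity direction, and you concede as much. Three things are missing. First, you give no mechanism for proving (i): two cycles that \emph{share} a vertex are not covered by a local analysis of ``two cycles linked by a bridge or a path.'' The paper's device is that lower-optimality forces every inequality in the chain proving $r(G)\ge 2m(G)-2c(G)$ to be an equality, so for any vertex $u$ on a cycle one gets $r(G-u)=r(G)$, $m(G-u)=m(G)-1$ and $c(G-u)=c(G)-1$; by Lemma~\ref{L23}(iii) the last equality already implies $u$ lies on exactly one cycle, and it also shows $u$ cannot be quasi-pendant (Lemma~\ref{L520}), which is what legitimizes your pendant-pruning in this direction. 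Second, for (ii) one must reduce to the unicyclic case (deleting cycle vertices preserves lower-optimality by the same tightness argument) and then show that for a lower-optimal unicyclic graph the coefficient $a_{2m}$ of the characteristic polynomial can vanish only if $4\mid l$; this determinant computation (the analogue of Lemma~\ref{L540}) is exactly the ``alternating-path / rank control'' step you defer, and it is not routine. Third, the necessity of (iii) rests on the structural fact that a lower-optimal graph admits a maximum matching using no edge between $O(G)$ and $V(G)\setminus O(G)$ (Lemma~\ref{L57}); you identify this as the principal technical obstacle but do not supply it. As it stands, the proposal establishes only the sufficiency half (after repairing the base case), while the necessity half remains a plan rather than a proof.
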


\begin{theorem}[\cite{WANGL}]\label{un3}
Let $G$ be a simple connected undirected graph.
Then $r(G)=2m(G)+c(G)$ if and only if  all the following conditions hold for $G$:

{\em(i)} the cycles (if any) of $G$ are pairwise vertex-disjoint;

{\em(ii)} each cycle (if any) of $G$ is odd;

{\em(iii)} $m(T_{G})=m(G-O(G))$, where $O(G)$ is the set of vertices in cycles of $G$.
\end{theorem}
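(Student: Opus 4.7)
The approach is induction on the cyclomatic number $c(G)$, using the upper bound $r(G) \leq 2m(G) + c(G)$ from Theorem~\ref{un1}. The base case $c(G) = 0$ is a tree, where all three conditions hold vacuously (since $O(G) = \emptyset$ and $T_G = G$), and the classical identity $r(T) = 2m(T)$ for trees matches $2m(G) + c(G)$ exactly. The inductive step will revolve around peeling off one odd cycle at a time.

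For the ``if'' direction, assuming (i)--(iii) hold, I would pick any cycle $C$ of length $2k+1$. Since $C$ is vertex-disjoint from the other cycles by (i), the graph $G' = G - V(C)$ again satisfies (i)--(iii), with $c(G') = c(G) - 1$. Condition (iii), unpacked, says that a maximum matching of $G$ can be chosen to use exactly $k$ edges of $C$ together with a maximum matching of $G - O(G)$, avoiding any edge from $C$ to $G-V(C)$, so $m(G) = m(G') + k$. The spectral counterpart is a key lemma I would establish: under these conditions, $r(G) = r(G') + (2k+1)$. Given this identity, the inductive hypothesis $r(G') = 2m(G') + c(G')$ immediately upgrades to $r(G) = 2m(G) + c(G)$.

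For the ``only if'' direction, I would establish each condition in sequence by contrapositive. If some cycle $C$ has even length $2\ell$, then $r(A(C)) \in \{2\ell-2, 2\ell\}$ while $2m(C) + c(C) = 2\ell + 1$; combining the vertex-deletion bound $r(G) \leq r(G-V(C)) + 2|V(C)|$ with Theorem~\ref{un1} applied to $G - V(C)$, together with $m(G) \geq m(G-V(C)) + \ell$ and $c(G) = c(G-V(C)) + 1$, yields $r(G) \leq 2m(G) + c(G) - 1$, a contradiction. If two cycles share a vertex, a similar comparison on the common $\theta$-type subgraph forces a strict inequality which propagates to $G$. With (i) and (ii) in force, the observation that any matching of $T_G$ lifts to one of $G$ by adding a near-perfect matching on each odd cycle gives $m(G) \geq m(T_G) + \sum_i k_i$; equating this with the formula $r(G) = 2m(G) + c(G)$ derived through the forest/cycle block decomposition pins down $m(T_G) = m(G - O(G))$, which is precisely (iii).

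The main obstacle I anticipate is the spectral lemma $r(G) = r(G - V(C)) + (2k+1)$ for a peeled odd cycle $C$ under condition (iii): the naive subadditivity gives only the weaker bound $r(G) \leq r(G-V(C)) + 2(2k+1)$, and equality requires a careful Schur-complement argument showing that the off-diagonal block connecting $V(C)$ to $V(G) \setminus V(C)$ is absorbed by the nonsingular matrix $A(C_{2k+1})^{-1}$ without enlarging the rank of the complementary block. Condition (iii) is exactly what ensures that this Schur complement of $A(G-V(C))$ has the same rank as $A(G-V(C))$ itself, closing the induction. The remaining bookkeeping, in particular handling cycles whose attachment vertices are shared with edges of the tree part outside $O(G)$, will need to be tracked through the matching extension in (iii) but does not present a conceptual difficulty.
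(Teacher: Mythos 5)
First, a point of reference: the paper does not prove this statement itself---it is quoted from \cite{WANGL}---but its Theorem~\ref{T40} generalizes it to complex unit gain graphs, and that proof specializes to the present case. The paper's route is quite different from yours: sufficiency is obtained by showing that the coefficient $a_{2m+c}$ of the characteristic polynomial is nonzero (every elementary subgraph on $2m(G)+c(G)$ vertices must contain all $c(G)$ cycles as components, all of these are odd, and all resulting terms carry the same sign), and necessity is an induction on the order via pendant-vertex deletion (Lemmas~\ref{L042}, \ref{L043}, \ref{L044}), not by peeling off whole cycles.

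The central gap in your proposal is the lemma $r(G)=r(G-V(C))+(2k+1)$. You reduce it to showing that the Schur complement $A_{G'}-B^{T}A_C^{-1}B$ has the same rank as $A_{G'}$ and assert that condition (iii) ``is exactly what ensures'' this, but you give no mechanism, and none is apparent: $A_C^{-1}$ for an odd cycle is a dense matrix with nonzero diagonal (e.g.\ $A(C_3)^{-1}=\tfrac12 J-I$), so the Schur complement acquires nonzero diagonal entries at every neighbour of $V(C)$ in $G'$ and is not a perturbation of $A_{G'}$ whose rank is visibly controlled by a combinatorial matching condition. Without this lemma the ``if'' direction does not close. Second, your contrapositive argument for condition (ii) is arithmetically wrong as stated: combining $r(G)\le r(G-V(C))+2|V(C)|$ with $r(G-V(C))\le 2m(G-V(C))+c(G)-1$ and $m(G-V(C))\le m(G)-\ell$ yields only $r(G)\le 2m(G)+c(G)+2\ell-1$, which is \emph{weaker} than the upper bound, not a contradiction; the crude vertex-deletion inequality loses $2$ per deleted vertex and cannot detect the rank deficiency of an even cycle. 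The argument for condition (i) (``a strict inequality on the $\theta$-subgraph propagates to $G$'') suffers from the same defect---rank deficiency of an induced subgraph does not propagate to $G$ through subadditivity alone, which is exactly why the paper needs the dedicated Lemma~\ref{L044} (no pendant vertices and $c(G)\ge 2$ implies not upper-optimal, proved via the matching-coverage Lemma~\ref{L24}) to dispose of intersecting cycles. To repair your plan you would either have to prove the rank-additivity lemma honestly or switch to the coefficient/pendant-vertex machinery the paper uses.
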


A $mixed$ $graph$ $\widetilde{G}$ is a graph where both directed and undirected edges may exist.
The Hermitian-adjacency matrix of a mixed graph $\widetilde{G}$ of order $n$ is the $n \times n$ matrix $H(\widetilde{G}) = (h_{kl})$, where $h_{kl}=-h_{lk}=\mathbf{i}$ if
$v_{k} \rightarrow v_{l}$, where $\mathbf{i}$ is the imaginary number unit and $h_{kl}=h_{lk}=1$ if $v_{k}$ is connected to $v_{l}$ by an undirected edge, and $h_{kl}=0$ otherwise.
For a mixed cycle $\widetilde{C}$, the $signature$ of $\widetilde{C}$, denoted by $\eta(\widetilde{C})$, is defined as $|f-b|$, where $f$ denotes the number of forward-oriented edges and $b$ denotes the number of backward-oriented edges of $\widetilde{C}$. It is obvious that a mixed graph $\widetilde{G}$ is just a complex unit gain graph $\Phi=(G,{\mathbb T},\varphi)$ with $\varphi(\overrightarrow{E})\subseteq  \{1, \boldsymbol{i}, -\boldsymbol{i} \}$.

For mixed graphs, the relation among the rank, the matching number and the cyclomatic number was investigated independently in \cite{LSC} and \cite{TFL}, respectively.

\begin{theorem}[\cite{LSC,TFL}]\label{mixed1}
Let $\widetilde{G}$ be a connected mixed graph. Then
$$
2m(G)-2c(G) \leq r(\widetilde{G}) \leq 2m(G)+c(G).
$$
\end{theorem}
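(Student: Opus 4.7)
The plan is to prove both inequalities simultaneously by strong induction on $|V(G)|$; the base case $|V(G)|=1$ is trivial. I would first record three standard preliminaries: (i) Cauchy interlacing for the Hermitian matrix $H(\widetilde G)$ yields $r(\widetilde G)-2 \le r(\widetilde G - v) \le r(\widetilde G)$ for every vertex $v$; (ii) the pendant rank identity $r(\widetilde G) = r(\widetilde G - u - v) + 2$ whenever $u$ has degree $1$ with unique neighbor $v$, obtained by a single block-elimination step using the lone nonzero entry in row $u$ of $H(\widetilde G)$; and (iii) the combinatorial identities $m(G) = m(G - u - v) + 1$ for a pendant edge $uv$ (some maximum matching must use it) together with $c(G') \le c(G)$ for any induced subgraph $G'$ of $G$.

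For the inductive step I would split on the structure of $G$. Isolated vertices can be deleted without changing any of $r$, $m$, $c$. If $G$ has a pendant vertex $u$ with neighbor $v$, then applying induction to $\widetilde G - u - v$ and chaining (i)--(iii) gives
\[
r(\widetilde G) = r(\widetilde G - u - v) + 2 \le 2m(G - u - v) + c(G - u - v) + 2 \le 2(m(G)-1) + c(G) + 2 = 2m(G) + c(G),
\]
and the symmetric computation produces $r(\widetilde G) \ge 2m(G) - 2c(G)$, so both bounds close in this case.

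The remaining and substantive case is when $G$ has minimum degree at least $2$. If $G=C_n$ is a single cycle, both bounds reduce to elementary checks on the spectra of mixed cycles, whose eigenvalues lie in $[-2,2]$ and are completely determined by the signature $\eta(\widetilde C)$. Otherwise $G$ has a nontrivial block structure, and I would look for a cyclic end block $C$ attached to the rest of $G$ through a single cut vertex $z$. Inside $C$ I would delete a two-vertex ear $\{v,w\}$ with $v$ a degree-$2$ vertex of $G$ and $w\ne z$ a $C$-neighbor of $v$; this drops $c$ and $m$ by exactly $1$ each, and because $v$ is a pendant of $\widetilde G - w$ the pendant identity (ii) applies in the form $r(\widetilde G - w) = r(\widetilde G - v - w) + 2$. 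One further interlacing step then closes both bounds in the same way as the pendant case.

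The main obstacle is precisely this end-block reduction. The naive single-vertex deletion of a cycle vertex $v$ loses a unit of slack in the upper bound, because $c(G - v) = c(G) - 1$ while $r(\widetilde G)$ may exceed $r(\widetilde G - v)$ by $2$; the two-vertex ear deletion is what repairs this. The delicate structural content is verifying that a usable pair $(v,w)$ can always be found inside some cyclic end block, and handling the residual case in which every block is $2$-connected of minimum degree $\ge 3$ (for example $K_4$) via an ear decomposition of one such block, accounting for how each added ear of length $\ell$ increases $c$ by $1$, $m$ by $\lfloor \ell/2\rfloor$, and $r$ by at most $2\lfloor \ell/2\rfloor + 2$ in the Hermitian setting. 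Once this structural lemma and the pendant identity (ii) are in place, both inequalities of Theorem~\ref{mixed1} close simultaneously by induction, generalising the argument of \cite{WANGLONG} from undirected to mixed graphs.
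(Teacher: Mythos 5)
Your lower-bound argument is sound and is essentially what the paper does for its more general Theorem~\ref{T30} (of which Theorem~\ref{mixed1} is the special case $\varphi(\overrightarrow{E})\subseteq\{1,\boldsymbol{i},-\boldsymbol{i}\}$): delete a vertex on a cycle, so that $c$ drops by at least $1$, $m$ by at most $1$, and $r$ does not increase, then induct. The upper bound, however, has a genuine gap that your own ``unit of slack'' remark identifies but does not repair. In the minimum-degree-$\geq 2$ case your two-vertex ear deletion gives at best $r(\widetilde G)\le r(\widetilde G-v-w)+4$ by interlacing, while $m$ drops by exactly $1$ and $c$ by exactly $1$ (when $d(v)=d(w)=2$ and connectivity is preserved), so the induction only yields $r(\widetilde G)\le 2m(G)+c(G)+1$, still off by one. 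The triangle already exhibits the obstruction: $r=3=2m+c$ is tight, yet interlacing only certifies $r(C_3)\le r(K_2)+2=4$, so no chain of vertex deletions controlled by interlacing can close the bound. Moreover the identity you invoke, $r(\widetilde G-w)=r(\widetilde G-v-w)+2$, is not the pendant identity: Lemma~\ref{L13} deletes the pendant vertex \emph{together with its neighbour}, and deleting a pendant vertex alone can leave the rank unchanged (e.g.\ the centre-adjacent pendant of $P_3$). The same off-by-one persists in your ear-decomposition fallback, where $r$ may grow by $2\lfloor \ell/2\rfloor+2$ while $2m+c$ grows by only $2\lfloor \ell/2\rfloor+1$.

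The upper bound is not proved by deletion in the paper or in \cite{LSC,TFL}; it is proved globally via the characteristic polynomial. Writing $P_{(G,\varphi)}(\lambda)=\lambda^{n}+a_{1}\lambda^{n-1}+\cdots+a_{n}$, the coefficient $(-1)^{k}a_{k}$ is the sum of the $k\times k$ principal minors of the Hermitian adjacency matrix, and the principal minor indexed by $S$ vanishes unless $G[S]$ admits an elementary spanning subgraph $F$ (every component $K_2$ or a cycle). Any such $F$ satisfies $|S|=|V(F)|\le 2m(F)+c(F)\le 2m(G)+c(G)$, so $a_{k}=0$ for all $k>2m(G)+c(G)$; since $H(\widetilde G)$ is Hermitian, its rank equals the largest $k$ with $a_{k}\ne 0$, giving $r(\widetilde G)\le 2m(G)+c(G)$. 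You should replace your inductive upper-bound argument with this counting argument, or at least restrict the induction to the pendant-vertex reduction (where Lemma~\ref{L13} is an exact equality) and handle the pendant-free core by the polynomial argument.
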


\begin{theorem}[\cite{LSC,TFL}]\label{mixed2}
Let $\widetilde{G}$ be a connected mixed graph.
Then $r(\widetilde{G})=2m(G)-2c(G)$ if and only if  all the following conditions hold for $\widetilde{G}$:

{\em(i)} the cycles (if any) of $\widetilde{G}$ are pairwise vertex-disjoint;

{\em(ii)} each cycle $\widetilde{C_{l}}$ of $\widetilde{G}$ is even with $\eta(\widetilde{C_{l}}) \equiv   l \, (\rm{mod} \ 4)$;

{\em(iii)} $m(T_{G})=m(G-O(G))$, where $O(G)$ is the set of vertices in cycles of $\widetilde{G}$.
\end{theorem}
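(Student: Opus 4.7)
The plan is to prove Theorem \ref{mixed2} by induction on $|V(G)|$, handling necessity and sufficiency simultaneously. Three preliminary facts drive the argument. First, any mixed forest $\widetilde{F}$ is balanced, so $H(\widetilde{F})$ is unitarily similar via a diagonal gauge transformation to $A(F)$, and for a forest $r(A(F))=2m(F)$ by the classical König-type formula. Second, a direct spectral computation for a mixed cycle $\widetilde{C_l}$ yields $r(\widetilde{C_l})=l-2$ if $l$ is even with $\eta(\widetilde{C_l})\equiv l\pmod{4}$, and $r(\widetilde{C_l})=l$ otherwise, so condition (ii) is precisely when a cycle attains its lower bound $l-2=2m(C_l)-2c(C_l)$. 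Third, I will rely on the interlacing-type inequality $r(\widetilde{G})-2\leq r(\widetilde{G}-v)\leq r(\widetilde{G})$, the matching-deletion bound $m(G)-1\leq m(G-v)\leq m(G)$, and the pendant-reduction identities: if $v$ is a pendant adjacent to $u$, then $r(\widetilde{G})=r(\widetilde{G}-v-u)+2$, $m(G)=m(G-v-u)+1$, and $c(G)=c(G-v-u)$.

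The base case $|V(G)|\leq 1$ is trivial, and if $\widetilde{G}$ is a forest ($c(G)=0$) the first ingredient gives $r(\widetilde{G})=2m(G)$ while (i)--(iii) hold vacuously. In the inductive step, if $\widetilde{G}$ has a pendant vertex $v$ with neighbor $u$, the pendant identities reduce the problem to $\widetilde{G}-v-u$: the equality $r(\widetilde{G})=2m(G)-2c(G)$ is equivalent to the corresponding equality for $\widetilde{G}-v-u$, and conditions (i)--(iii) transfer (up to verifying that $m(T_G)=m(G-O(G))$ is preserved, which follows from $c(G)$ being unchanged and $u,v$ lying on no cycle). Induction disposes of this sub-case, so we may assume $\widetilde{G}$ has minimum degree $\geq 2$ and $c(G)\geq 1$.

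For sufficiency, assume (i)--(iii) and pick any cycle $\widetilde{C_l}$; by (i) it is vertex-disjoint from all other cycles of $\widetilde{G}$. Choose two consecutive vertices $u_1,u_2\in V(\widetilde{C_l})$ using (iii) so that $\{u_1,u_2\}$ is saturated by some maximum matching of $G$; then $\widetilde{G}'=\widetilde{G}-u_1-u_2$ satisfies $c(G')=c(G)-1$ and $m(G')=m(G)-1$. Conditions (i)--(iii) are inherited by $\widetilde{G}'$ (the broken cycle becomes a path $P_{l-2}$ contributing nothing to $c$), and $r(\widetilde{G})=r(\widetilde{G}')+2$ follows by combining the second ingredient (cycle $\widetilde{C_l}$ contributes $l-2$ to the rank, while the surviving $\widetilde{P_{l-2}}$ contributes $l-4$ after further pendant reductions, a net difference of $2$) with the interlacing inequality. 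Induction on $\widetilde{G}'$ then yields $r(\widetilde{G})=2m(G)-2c(G)$. For necessity, assume $r(\widetilde{G})=2m(G)-2c(G)$ and suppose one of (i)--(iii) fails. If (i) fails, two cycles share a vertex $w$; deleting $w$ drops $c(G)$ by at least $2$ but drops $r(\widetilde{G})$ by at most $2$, contradicting tightness of the bound on $\widetilde{G}-w$. If (ii) fails for some $\widetilde{C_l}$, the second ingredient gives $r(\widetilde{C_l})>l-2$; a localized rank computation around that cycle propagates this excess to $\widetilde{G}$. If (iii) fails, a matching-theoretic argument shows that every maximum matching of $G$ leaves some cycle with additional pendant-like rank contribution beyond $l-2$, again contradicting tightness.

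The main obstacle is condition (iii) and its combinatorial bookkeeping. The identity $m(T_G)=m(G-O(G))$ is the exact compatibility condition that allows a maximum matching of $G$ to be assembled from a near-perfect matching on each cycle together with a maximum matching of $G-O(G)$, using at most one matching edge attaching each cycle to the tree part. Verifying that (iii) is preserved under both pendant reduction and the cycle-breaking reduction $\widetilde{G}\mapsto\widetilde{G}-u_1-u_2$, and that its failure in the necessity direction genuinely forces a strict rank inequality, requires carefully tracking alternating paths between cycle attachment points and leaves of $T_G$. This matching-extension argument—rather than the linear algebra of interlacing and gauge transformations—is where most of the delicate work lies.
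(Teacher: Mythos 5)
The paper does not prove Theorem \ref{mixed2} directly (it is quoted from the literature); the relevant comparison is the paper's proof of its generalization, Theorem \ref{T50}, together with Lemmas \ref{L520}, \ref{L540}, \ref{L54}, \ref{L55} and \ref{L57}. Your overall architecture (induction, pendant reduction $r\mapsto r-2$, $m\mapsto m-1$, $c\mapsto c$, a spectral computation for the isolated cycle, and treating $m(T_G)=m(G-O(G))$ as the matching-compatibility condition) is the same as the paper's, and your three preliminary ingredients are correct. But there are genuine gaps.

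First, your cycle-breaking reduction in the sufficiency direction is arithmetically inconsistent with the statement you are trying to prove. With $c(G')=c(G)-1$ and $m(G')=m(G)-1$ the inductive target for $\widetilde{G}'$ is $2m(G')-2c(G')=2m(G)-2c(G)$, which is the \emph{same} number as the target for $\widetilde{G}$; so you need $r(\widetilde{G})=r(\widetilde{G}')$, not $r(\widetilde{G})=r(\widetilde{G}')+2$ as you claim. With your identity the induction would yield $r(\widetilde{G})=2m(G)-2c(G)+2$. Moreover, no actual proof of any exact rank identity under deletion of two cycle vertices is offered: interlacing only gives $r(\widetilde{G})-4\le r(\widetilde{G}')\le r(\widetilde{G})$, and the heuristic that the cycle ``contributes $l-2$'' while the leftover path ``contributes $l-4$'' presupposes the cycle is a separate component. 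The paper avoids this entirely: its sufficiency induction deletes a pendant vertex of $T_G$ together with its neighbour (never breaking a cycle), with the isolated cycle as the base case.

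Second, the necessity direction is asserted rather than proved at exactly the points where the real work lies. The statement ``$r(\widetilde{C_l})>l-2$ propagates to $\widetilde{G}$'' does not follow from monotonicity of rank under induced subgraphs ($r(\widetilde{C_l})\le r(\widetilde{G})$ gives no contradiction with $r(\widetilde{G})=2m-2c$ by itself), and ``a matching-theoretic argument shows\dots'' for the failure of (iii) is a placeholder. The paper's engine here is the equality analysis of its lower-bound proof: if $r(\widetilde{G})=2m(G)-2c(G)$ then for \emph{every} vertex $u$ on a cycle one gets $r(\widetilde{G})=r(\widetilde{G}-u)$, $m(G)=m(G-u)+1$, $c(G)=c(G-u)+1$, and $\widetilde{G}-u$ is again extremal (Lemma \ref{L520}); this simultaneously forces the cycles to be vertex-disjoint (via Lemma \ref{L23}) and drives the induction. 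Condition (ii) is then extracted by a characteristic-polynomial/elementary-subgraph computation for unicyclic graphs (Lemma \ref{L540}), and (iii) by the pendant-cycle structure and the existence of a maximum matching avoiding all cycle--tree edges (Lemmas \ref{L55} and \ref{L57}). None of these steps is present in your sketch, so as written the proof does not go through.
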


\begin{theorem}[\cite{LSC,TFL}]\label{mixed3}
Let $\widetilde{G}$ be a connected mixed graph.
Then $r(\widetilde{G})=2m(G)+c(G)$ if and only if all the following conditions hold for $\widetilde{G}$:

{\em(i)} the cycles (if any) of $\widetilde{G}$ are pairwise vertex-disjoint;

{\em(ii)} each cycle of $\widetilde{G}$ is odd with even signature;

{\em(iii)} $m(T_{G})=m(G-O(G))$, where $O(G)$ is the set of vertices in cycles of $\widetilde{G}$.
\end{theorem}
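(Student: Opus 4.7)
The plan is to prove Theorem~\ref{mixed3} by induction on the cyclomatic number $c(G)$, in both the sufficiency and necessity directions, aided by two reduction lemmas that I would establish first. The first reduction says that if $u$ is a pendant vertex with neighbor $v$, then $r(\widetilde{G})=r(\widetilde{G}-u-v)+2$, $m(G)=m(G-u-v)+1$ and $c(G)=c(G-u-v)$; this follows by Schur-complementing the nonsingular $2\times 2$ block on rows/columns $\{u,v\}$ of $H(\widetilde{G})$. The second reduction says that if $\widetilde{C}$ is an odd cycle of length $2k+1$ with even signature that is vertex-disjoint from every other cycle of $G$, and if some maximum matching of $G$ contains exactly $k$ edges of $C$, then $r(\widetilde{G})=r(\widetilde{G}-V(C))+(2k+1)$; this is because the principal submatrix $H(\widetilde{C})$ is invertible (rank $2k+1$) when $C$ is odd with $\eta$ even, and the matching hypothesis aligns with the structure of $H(\widetilde{G}-V(C))$.

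For sufficiency, the base case $c(G)=0$ reduces to the tree identity $r(\widetilde{T})=2m(T)$, which is standard. In the inductive step, I would pick any cycle $C$ (which by (i) is vertex-disjoint from all others); conditions (ii), (iii) guarantee the hypotheses of the second reduction. Applying it together with the arithmetic identity
\[
2m(G)+c(G)=2\bigl(m(G-V(C))+k\bigr)+\bigl(c(G-V(C))+1\bigr)
\]
and the inductive hypothesis on $\widetilde{G}-V(C)$, the equality $r(\widetilde{G})=2m(G)+c(G)$ follows.

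For necessity, assume $r(\widetilde{G})=2m(G)+c(G)$. First I would show that cycles must be vertex-disjoint: if two cycles shared a vertex or an edge, I would isolate a small subgraph (a theta graph or a pair of cycles glued at a vertex) and check by direct computation that this forces a strict inequality $r<2m+c$ locally, which propagates to the whole graph via the bound $r(\widetilde{G})\le r(\widetilde{G}-x)+2$ and the complementary bound $2m(G)+c(G)\ge 2m(G-x)+c(G-x)+2$. Once the cycles are disjoint, the equality forces each cycle $\widetilde{C}$ to contribute exactly $|V(C)|$ to $r(\widetilde{G})$ (against $2m(C)+c(C)$). Using the known mixed-cycle rank formulas (rank $l$ iff $l$ is odd with even $\eta$, or $l$ is even with $\eta\equiv 2\pmod 4$; and the latter gives $2m(C)+c(C)=l+1>r(\widetilde{C})$), only the odd-with-even-signature case survives, yielding (ii). Condition (iii) is then forced by tracking equality in the standard matching inequality $m(G)\ge m(G-O(G))+\sum_{\text{cycles}}\lfloor |V(C)|/2\rfloor$ throughout the inductive reduction.

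I expect the main obstacle to be the two-cycle intersection case in the necessity proof: classifying the possible intersection patterns (sharing a vertex, sharing an edge, or sharing a longer path) and showing in each that the relevant principal submatrix of $H(\widetilde{G})$ has deficient rank relative to $2m+c$, uniformly in the gain. A secondary technical point is correctly handling condition (iii) in the induction: one has to argue that after stripping a pendant cycle, the contracted-tree identity $m(T_G)=m(G-O(G))$ transfers to $m(T_{G-V(C)})=m((G-V(C))-O(G-V(C)))$, which requires that the maximum matching of $G$ chosen in reduction~(R2) be coherent with the matchings in $G-O(G)$ and $T_G$.
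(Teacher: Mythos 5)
First, note that the paper does not actually prove Theorem~\ref{mixed3}: it is quoted from \cite{LSC,TFL} as background, and within this paper it arises only as the special case $\varphi(\overrightarrow{E})\subseteq\{1,\boldsymbol{i},-\boldsymbol{i}\}$ of Theorem~\ref{T40} (the condition $Re(\varphi(C_{l},\varphi))\neq 0$ becomes ``$l$ odd with $\eta(\widetilde{C_{l}})$ even''). So the right comparison is with the proof of Theorem~\ref{T40} in Section~5, and against that your proposal has a genuine gap: your second reduction lemma is false as stated. Take the ``paw'': an all-undirected triangle $v_{1}v_{2}v_{3}$ with a pendant vertex $v_{4}$ attached to $v_{1}$. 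Here $C$ is odd with even signature, vertex-disjoint from every other cycle, $k=1$, and the unique maximum matching $\{v_{2}v_{3},v_{1}v_{4}\}$ contains exactly $k$ edges of $C$; yet $r(\widetilde{G})=4$ while $r(\widetilde{G}-V(C))+3=0+3=3$. Your justification fails exactly at the Schur complement: since $H(\widetilde{C})$ is invertible, $r(H)=|V(C)|+r\bigl(H(\widetilde{G}-V(C))-B^{*}H(\widetilde{C})^{-1}B\bigr)$, and the correction term $B^{*}H(\widetilde{C})^{-1}B$ does not vanish --- for an odd cycle with even signature the relevant diagonal entry $(H(\widetilde{C})^{-1})_{xx}$ is nonzero (it equals $-1/2$ for the triangle), so the Schur complement is not $H(\widetilde{G}-V(C))$. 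A correct version of this reduction needs the full strength of hypothesis (iii), and proving it then is essentially as hard as the theorem itself; the paper sidesteps this entirely by proving sufficiency through the Sachs-type expansion of the characteristic-polynomial coefficient $a_{2m+c}$, showing that every elementary subgraph on $2m+c$ vertices must contain all $c$ cycles and that their contributions $2^{c}\prod_{j}a_{j}$ cannot cancel (Lemma~\ref{L045}).

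Two further points. Your first reduction asserts $c(G)=c(G-u-v)$ unconditionally, which is false when the quasi-pendant vertex $v$ lies on a cycle; that $v$ cannot lie on a cycle is itself something to prove (under hypothesis (iii) it is Lemma~\ref{L53}, and under upper-optimality it is the content of Lemma~\ref{L042}), and it is precisely what makes the pendant-stripping induction close. Also, your mechanism for forcing (i) in the necessity direction does not work as written: for $x$ a common vertex of two cycles the chain $r(\widetilde{G})\le r(\widetilde{G}-x)+2\le 2m(G-x)+c(G-x)+2\le 2m(G)+c(G)$ yields only equality constraints, not a contradiction, and a ``local'' rank deficiency of an induced theta subgraph does not propagate upward, since $m$ and $c$ of the ambient graph both grow. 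The paper instead proves that a connected graph with no pendant vertex and $c(G)\ge 2$ is never upper-optimal (Lemma~\ref{L044}), via the matching-coverage count of Lemma~\ref{L24}; vertex-disjointness of the cycles then falls out of the induction rather than being attacked head-on.
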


\iffalse
$G$ is a graph with pairwise vertex-disjoint cycles (if any),
let $T_{G}$  be the acyclic graph obtained from $G$ by contracting each cycle of $G$ into a vertex (called a {\it cyclic vertex}).

Moreover, denoted by $[T_{G}]$, the subgraph of $T_{G}$
induced by all non-cyclic vertices.
\fi

%In this paper, we will generalize
%Theorems~\ref{KT1}, \ref{KT7} and~\ref{KT4}.

A $signed$ $graph$ $(G, \sigma)$ consists of a simple graph $G=(V, E)$, referred to as its underlying graph,
and a mapping $\sigma: E  \rightarrow  \{ +, - \}$, its edge labelling. Let $C$ be a cycle of $(G, \sigma)$. The $sign$ of $C$ is defined by $\sigma(C)=\prod_{e \in C}\sigma(e)$.
A cycle $C$ is said to be positive or negative if $\sigma(C)=+$ or $\sigma(C)=-$, respectively.
It is obvious that a signed graph $(G, \sigma)$ is just a complex unit gain graph $\Phi=(G,{\mathbb T},\varphi)$ with $\varphi(\overrightarrow{E})\subseteq  \{1,-1\}$.

In~\cite{HSJ}, He et al.
characterized the relation among the rank, the matching number and the cyclomatic number of a signed graph.

%For any complex unit gain graph $\Phi=(G,{\mathbb T},\varphi)$,
%if $\varphi(\overrightarrow{E})\subseteq  \{1,-1\}$,

\begin{theorem}[\cite{HSJ}]\label{signed1}
Let $(G, \sigma)$ be a connected signed graph. Then
$$
2m(G)-2c(G) \leq r(G, \sigma) \leq 2m(G)+c(G).
$$
\end{theorem}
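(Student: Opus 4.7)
The plan is to prove both inequalities by induction on $|E(G)|$, exploiting that a signed graph is the specialization of a complex unit gain graph with $\varphi(\overrightarrow{E})\subseteq\{+1,-1\}$ and that diagonal $\pm 1$ switching is a rank-preserving unitary congruence of $H(G,\sigma)$. The base case is $c(G)=0$: here $G$ is a tree, any signature is switching-equivalent to the all-$+1$ one, so $r(G,\sigma)=r(G)$, and the classical tree formula $r(G)=2m(G)$ collapses both target inequalities to equality.

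For the inductive step the key ingredient is the rank-$2$ perturbation bound $|r(G,\sigma)-r(G-e,\sigma)|\leq 2$ for any $e\in E(G)$, paired with $c(G-e)=c(G)-1$ whenever $e$ lies on a cycle. A pendant reduction disposes of vertices of degree $1$ first: if $v$ is pendant with neighbor $u$ then $r(G,\sigma)=r(G-\{u,v\},\sigma)+2$ and $m(G)=m(G-\{u,v\})+1$, while an elementary component/edge count yields $c(G-\{u,v\})\leq c(G)$, so both bounds for $G$ follow from those for $G-\{u,v\}$ by induction. I may therefore assume $\delta(G)\geq 2$, in which case $G$ contains a cycle.

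For the lower bound I would fix any maximum matching $M$ of $G$; since $M$ cannot cover every edge of a cycle, some cycle $C$ contains an edge $e\notin M$, whence $m(G-e)=m(G)$ and $c(G-e)=c(G)-1$. Combining the inductive hypothesis with the perturbation bound yields
\[
r(G,\sigma)\;\geq\;r(G-e,\sigma)-2\;\geq\;2m(G)-2c(G),
\]
as required.

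The upper bound is the main obstacle. Applying the perturbation bound to an arbitrary cycle edge only gives $r(G,\sigma)\leq 2m(G-e)+c(G)+1$, which exceeds the target by $1$ whenever $m(G-e)=m(G)$, so the choice of deletion must be made with care. My approach, once $\delta(G)\geq 2$, is to locate a cycle $C$ and an edge $e\in E(C)$ belonging to every maximum matching of $G$, so that $m(G-e)=m(G)-1$ and the induction closes as $r(G,\sigma)\leq 2m(G)+c(G)-1$. Establishing the existence of such an edge is a purely matching-theoretic statement about $G$, analogous to the arguments used for undirected graphs in~\cite{WANGLONG,WANGL} and for mixed graphs in~\cite{LSC,TFL}, and this verification is the chief technical hurdle of the proof.
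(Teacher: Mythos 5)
Your pendant reduction and your lower-bound step are fine (the lower bound is even a mild variant of the paper's route: the paper's proof of Theorem~\ref{T30} deletes a \emph{vertex} lying on a cycle and uses $r(G-x,\varphi)\le r(G,\varphi)$, $m(G-x)\ge m(G)-1$, $c(G-x)\le c(G)-1$, whereas you delete a cycle \emph{edge} missed by a maximum matching; both close the induction identically). The genuine gap is in the upper bound, and it sits exactly where you flagged the ``chief technical hurdle'': the edge you need does not exist in general. Take $G=C_4$ with any signature: every maximum matching is a perfect matching, the two perfect matchings partition $E(C_4)$, and hence no edge of the (unique) cycle lies in every maximum matching. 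The same failure occurs for every even cycle, for $K_4$, and for any matching-covered graph with at least two perfect matchings --- precisely the pendant-free graphs your reduction funnels you into. So the inductive step $r(G,\sigma)\le r(G-e,\sigma)+2\le 2m(G)-2+c(G)-1+2$ cannot be launched, and the claimed matching-theoretic lemma is false, not merely unverified.

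The paper sidesteps this entirely: for the upper bound it does not induct at all. Writing $P_{(G,\varphi)}(\lambda)=\lambda^n+a_1\lambda^{n-1}+\cdots+a_n$, it observes that $(-1)^k a_k$ is a sum of principal minors $\det H(G[S],\varphi)$ with $|S|=k$, that such a minor vanishes unless $G[S]$ admits an elementary spanning subgraph (Lemma~\ref{le3-1}), and that an elementary graph $F$ on $k$ vertices (components $K_2$ or cycles) satisfies $k\le 2m(F)+c(F)\le 2m(G)+c(G)$; hence $a_k=0$ for all $k>2m(G)+c(G)$, which forces $r(G,\varphi)\le 2m(G)+c(G)$. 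If you insist on an inductive argument, the viable dichotomy is the one the paper uses later (in the proof of Lemma~\ref{L044}, via Lemma~\ref{L24}): in a pendant-free graph, either some cycle vertex $u$ satisfies $c(G-u)\le c(G)-3$, in which case delete the vertex $u$ and use $r(G,\sigma)\le r(G-u,\sigma)+2$ together with $m(G-u)\ge m(G)-1$; or else $c(G-u)\ge c(G)-2$ for every cycle vertex, and then at most $c(G)-1$ vertices are exposed by a maximum matching, so $2m(G)+c(G)\ge |V(G)|+1>r(G,\sigma)$ holds trivially. Either repair replaces your false edge-selection lemma.
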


\begin{theorem}[\cite{HSJ}]\label{signed2}
Let $(G, \sigma)$ be a connected signed graph.
Then $r(G, \sigma)=2m(G)-2c(G)$ if and only if  all the following conditions hold for $(G, \sigma)$:

{\em(i)} the cycles (if any) of $(G, \sigma)$ are pairwise vertex-disjoint;

{\em(ii)} for each cycle (if any) $C_{q}$ of $(G, \sigma)$, either $ q \equiv 0 \ (\rm{mod} \ 4)$ and $\sigma(C_{q})=+$ or $ q \equiv 2 \ (\rm{mod} \ 4)$ and $\sigma(C_{q})=-$;

{\em(iii)} $m(T_{G})=m(G-O(G))$, where $O(G)$ is the set of vertices in cycles of $(G, \sigma)$.
\end{theorem}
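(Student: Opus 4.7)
The plan is to mirror the strategy used for Theorems~\ref{un2} and~\ref{mixed2}, arguing by induction on $|V(G)|$ with two standard reductions as the workhorses. The first is the \emph{pendant reduction}: if $x$ is a pendant vertex of $G$ with unique neighbor $y$, then
\[
r(G,\sigma)=r(G-x-y,\sigma')+2,\quad m(G)=m(G-x-y)+1,\quad c(G)=c(G-x-y),
\]
where $\sigma'$ is the restriction of $\sigma$. This makes the equality $r(G,\sigma)=2m(G)-2c(G)$ and each of the conditions (i)--(iii) transfer cleanly between $G$ and $G-x-y$. The second is an \emph{isolated-cycle computation}: from the explicit eigenvalues of $H(C_q,\sigma)$ one verifies that $r(C_q,\sigma)=q-2$ exactly when either $q\equiv 0\pmod 4$ with $\sigma(C_q)=+$ or $q\equiv 2\pmod 4$ with $\sigma(C_q)=-$; in every other case (including all odd $q$) one has $r(C_q,\sigma)=q$, which strictly exceeds $2m(C_q)-2c(C_q)$. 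This is precisely the dichotomy encoded in condition~(ii).

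For \emph{sufficiency}, assume (i)--(iii). Condition~(iii) lets us pick a maximum matching $M$ of $G$ that restricts to a maximum matching of $G-O(G)$ and uses no edge inside any cycle. Applying the pendant reduction iteratively along $M$ peels off all non-cyclic vertices, decreasing $r$ and $2m$ by the same amount while leaving $c$ unchanged. The residue is the vertex-disjoint union (by (i)) of the cycles of $G$, each of the ``right type'' (by (ii)), and the isolated-cycle computation gives that each residual $C_{q_i}$ contributes $q_i-2=2m(C_{q_i})-2$ to the rank. Summing yields $r(G,\sigma)=2m(G)-2c(G)$ exactly.

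For \emph{necessity}, assume $r(G,\sigma)=2m(G)-2c(G)$ and argue the contrapositive for each of (i)--(iii). If (i) fails, two cycles share a vertex; a principal-submatrix plus Cauchy interlacing argument then produces too many nonzero eigenvalues and contradicts equality. If (iii) fails, a matching-augmenting path through a cyclic vertex increases $2m$ by $2$ without a compensating change in $r$ or $c$, again contradicting equality. If (ii) fails for some cycle $C_q$, we first apply the pendant reduction to reduce $G$ to a disjoint union of cycles; the isolated-cycle computation on the offending $C_q$ then forces its residual rank to strictly exceed $2m(C_q)-2c(C_q)$, and hence $r(G,\sigma)>2m(G)-2c(G)$.

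I expect the main obstacle to be the necessity of (ii): a single ``wrong-type'' signed cycle in $G$ gives only local extra eigenvalues, and a priori these might cancel against contributions from edges outside that cycle. The remedy is to choose the pendant reduction sequence so that it respects the cycle structure throughout, so that after all reductions one is left with exactly the vertex-disjoint union of the cycles of $G$, at which point the isolated-cycle computation becomes decisive.
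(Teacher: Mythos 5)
This statement is quoted from \cite{HSJ} and is the signed specialization of Theorem~\ref{T50}, whose proof in Section~4 follows the same skeleton you propose (induction, pendant reduction, and the rank computation for an isolated cycle from Lemma~\ref{L12}). Your sufficiency argument is essentially the paper's, modulo one point you gloss over: the pendant reduction only leaves $c$ unchanged if the quasi-pendant neighbour $y$ is not on a cycle, which must be proved (Lemma~\ref{L53} derives it from condition~(iii)), and the reduction does not literally ``peel off all non-cyclic vertices'' but rather is applied inductively to the components that remain, which is harmless.

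The genuine gaps are all in the necessity direction. First, your claim that pendant reductions reduce a lower-optimal $G$ to a disjoint union of its cycles is not something you can assume: a priori a lower-optimal graph may have a \emph{pendant cycle} and no pendant vertex at all (every leaf of $T_G$ cyclic), and then your reduction never starts. The paper must handle this as a separate case, deleting the pendant cycle rather than the trees, via Lemma~\ref{L55}(vi); that lemma in turn rests on the fact that deleting a vertex \emph{on a cycle} of a lower-optimal graph preserves lower-optimality (Lemma~\ref{L520}(ii)), a reduction entirely absent from your sketch. You also need, before any pendant reduction, that the neighbour of a pendant vertex never lies on a cycle; in the necessity direction this cannot be read off from (iii) (which is what you are trying to prove) and is instead Lemma~\ref{L520}(v), extracted from the equality case of the inequality chain in the proof of Theorem~\ref{T30}. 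Second, your treatment of the necessity of (iii) --- ``a matching-augmenting path through a cyclic vertex increases $2m$ by $2$ without a compensating change in $r$ or $c$'' --- is not an argument: if $m(T_G)>m(G-O(G))$ there is no direct eigenvalue or rank consequence to exploit, and the paper needs the full inductive two-case analysis together with Lemma~\ref{L57} (existence of a maximum matching avoiding all edges joining a cycle to its complement) and the matching identities of Lemma~\ref{L55}(ii)--(iv) to close this case. Third, for (i) the mechanism is not ``interlacing produces too many nonzero eigenvalues'' but the observation that a vertex $u$ on two cycles gives $c(G-u)\le c(G)-2$ (Lemma~\ref{L23}), which forces strict inequality in the chain $r(G,\sigma)\ge r(G-u,\sigma)\ge 2m(G-u)-2c(G-u)\ge 2m(G)-2c(G)+2$; this is recoverable from your sketch but is a different argument from the one you name. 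Your closing remark correctly identifies necessity of (ii) as the hard point, but ``choose the reduction sequence to respect the cycle structure'' is precisely the content that still has to be supplied.
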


\begin{theorem}[\cite{HSJ}]\label{signed3}
Let $(G, \sigma)$ be a connected signed graph.
Then $r(G, \sigma)=2m(G)+c(G)$ if and only if  all the following conditions hold for $(G, \sigma)$:

{\em(i)} the cycles (if any) of $(G, \sigma)$ are pairwise vertex-disjoint;

{\em(ii)} each cycle (if any) of $(G, \sigma)$ is odd;

{\em(iii)} $m(T_{G})=m(G-O(G))$, where $O(G)$ is the set of vertices in cycles of $(G, \sigma)$.
\end{theorem}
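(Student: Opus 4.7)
The plan is to prove Theorem~\ref{signed3} by splitting into the $(\Leftarrow)$ and $(\Rightarrow)$ directions and using induction on the cyclomatic number $c(G)$. I would begin with three preliminary observations: (a) for a tree $T$ with any signing $\sigma$, gauge-switching signs vertex-by-vertex reduces $(T,\sigma)$ to the all-$+$ signing, whence $r(T,\sigma)=r(T)=2m(T)$; (b) for an odd cycle $C_q$, the matrix $H(C_q,\sigma)$ is nonsingular under every signing, hence $r(C_q,\sigma)=q=2\lfloor q/2\rfloor+1=2m(C_q)+c(C_q)$, while for an even cycle $C_{2k}$ one has $r(C_{2k},\sigma)\le 2k$, which is strictly less than $2m(C_{2k})+c(C_{2k})=2k+1$; (c) the vertex-deletion interlacing $r(G,\sigma)-2\le r(G-v,\sigma|_{G-v})\le r(G,\sigma)$ holds in general, and becomes an identity with $+2$ when $v$ is a pendant vertex deleted together with its unique neighbour.

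For the $(\Leftarrow)$ direction, assuming (i)--(iii) hold, I would partition $V(G)$ into $O(G)$ and $V(G)\setminus O(G)$, noting that $F:=G-O(G)$ is a forest (by (i) and (ii)) and that condition (iii) forces $m(G)=m(F)+\sum_i\frac{q_i-1}{2}$, where $q_1,\ldots,q_k$ are the lengths of the odd cycles. I would then induct on $k=c(G)$: pick an odd cycle $C^{(k)}$, delete an edge $e$ of that cycle chosen so as to leave $m(G)$ unchanged (possible by (iii)), apply the induction hypothesis to the resulting graph (which still satisfies the analogues of (i)--(iii) with cyclomatic number $k-1$), and then re-insert $e$ using the rank-increment analysis from observation~(b) to recover the $+1$ contributed by closing the odd cycle. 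Summing contributions produces $r(G,\sigma)=2m(F)+\sum_i q_i=2m(G)+c(G)$.

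For the $(\Rightarrow)$ direction, I would argue contrapositively that failure of any of (i), (ii), (iii) yields $r(G,\sigma)<2m(G)+c(G)$. If (i) fails, two cycles intersect; a case analysis on the overlap (a single shared vertex, a shared path, or two internally disjoint paths forming a theta-subgraph) produces an induced signed subgraph whose rank sits strictly below its $2m+c$ bound, and the deficit propagates to $G$ via observation~(c). If (ii) fails, some cycle is even, and observation~(b) shows that this cycle contributes at most $q_i$ rather than $q_i+1$ to the upper bound, again yielding strict inequality. If (iii) fails, i.e.\ $m(T_G)>m(G-O(G))$, then one can lift a maximum matching of $T_G$ to a matching of $G$ using cycle edges in a way that conflicts with the full-rank contribution of the odd cycles, forcing a linear dependence in $H(G,\sigma)$ and hence a rank drop.

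The main obstacle will be the treatment of condition (iii) in the $(\Rightarrow)$ direction. Conditions (i) and (ii) are essentially local to cycles and can be refuted by exhibiting explicit small signed subgraphs with provably low rank, but condition (iii) is a global constraint coupling the forest $G-O(G)$ to the contracted tree $T_G$. Translating the matching-number mismatch $m(T_G)-m(G-O(G))>0$ into a concrete drop in $\mathrm{rank}(H(G,\sigma))$, most likely by constructing alternating-path kernel vectors supported across the cycle--tree interfaces, is the place where the most delicate combinatorial-and-linear-algebraic bookkeeping is required.
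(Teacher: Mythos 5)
This statement is quoted from \cite{HSJ}; the paper does not reprove it directly but obtains it as the special case $\varphi(\overrightarrow{E})\subseteq\{1,-1\}$ of Theorem~\ref{T40} (for a signed graph every odd cycle automatically has $Re(\varphi(C_l,\varphi))=\pm 1\neq 0$, so condition (ii) there collapses to ``$l$ is odd''). Measured against that proof, your plan has two genuine gaps, one in each direction.

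In the $(\Leftarrow)$ direction, the step ``re-insert $e$ \ldots to recover the $+1$ contributed by closing the odd cycle'' is not justified and is the entire difficulty. Adding one edge to a signed graph is a rank-$2$ Hermitian perturbation, so the rank of $H(G,\sigma)$ can go up by $0$, $1$ or $2$ (or even down); your observation~(b) only computes the rank of an isolated cycle and says nothing about how closing a cycle inside a larger graph changes the rank. The paper avoids edge surgery altogether: it shows that every elementary subgraph on $2m+c$ vertices must contain \emph{all} $c$ cycles (a counting argument using $k\ge c$), and then proves that the coefficient $a_{2m+c}$ of the characteristic polynomial is a nonzero multiple of $2^{c}\prod_j Re(\varphi(C_j,\varphi))$ via the non-cancellation Lemma~\ref{L045}; since all summands carry the same sign, $a_{2m+c}\neq 0$ and the lower bound $r\ge 2m+c$ follows. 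Some such determinant-expansion (or an honest rank-increment lemma for closing an odd cycle attached to the rest of the graph by a single vertex) is needed; your induction as stated does not close.

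In the $(\Rightarrow)$ direction, the contrapositive-by-bad-subgraph strategy does not work, because a rank deficit in an induced subgraph does not propagate upward. The only inequalities available are $r(G,\sigma)\le r(G-v,\sigma)+2$, $m(G-v)\ge m(G)-1$ and $c(G-v)\le c(G)$; chaining them from an induced subgraph $H$ with $r(H,\sigma)<2m(H)+c(H)$ back up to $G$ only yields $r(G,\sigma)\le 2m(G)+c(G)+\text{(slack)}$, not a strict inequality for $G$. This is precisely why the paper argues in the forward direction by induction on $|V(G)|$: Lemma~\ref{L044} (built on Lemma~\ref{L24}) shows an upper-optimal graph with $c\ge 2$ must have a pendant vertex, Lemma~\ref{L042} shows its quasi-pendant neighbour lies on no cycle and that deleting the pair preserves upper-optimality, and the unicyclic case is settled separately (Lemma~\ref{L043}), with (iii) extracted along the way by tracking $m(T_G)$ and $m(G-O(G))$ under the same deletions. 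You correctly identify (iii) as delicate, but (i) and (ii) are not ``essentially local'' either, for the same propagation reason; all three conditions have to be carried through a global induction rather than refuted by exhibiting a small subgraph.
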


In this paper,
we will find a lower bound and an upper bound for
$r(G,\varphi)$ in terms of $c(G)$ and $m(G)$,
where $c(G)$ and $m(G)$ are the
cyclomatic number and the matching number of $G$
respectively.
%its underlying graph are given.
Moreover, the properties of the extremal graphs which attain the lower and upper bounds are investigated.
Our results generalize the corresponding results about undirected graphs, mixed graphs and signed graphs, which were obtained in \cite{WANGLONG}, \cite{LSC} and \cite{HSJ}, respectively.
Our main results are the following Theorems \ref{T30}, \ref{T50}, \ref{T40} and \ref{T60}.

The following Theorem \ref{T30} generalizes Theorems \ref{un1}, \ref{mixed1} and \ref{signed1}.

\begin{theorem}\label{T30}
For any connected complex unit gain graph $\Phi=(G,{\mathbb T},\varphi)$, we have
%Let $(G, \varphi)$ be a connected complex unit gain graph. Then
$$
2m(G)-2c(G) \leq r(G, \varphi) \leq 2m(G)+c(G).
$$
\end{theorem}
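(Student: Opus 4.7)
The plan is to prove both bounds by induction on $|V(G)|$; the statement is additive over vertex-disjoint unions (since $c$, $m$, and $r$ are all additive on components, with $H(G,\varphi)$ block-diagonal across components), so it suffices to work with arbitrary, not necessarily connected, graphs. The base case $|V(G)|=1$ is trivial since $r(G,\varphi)=m(G)=c(G)=0$.

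In the inductive step, if $G$ has an isolated vertex $w$, then $r(G,\varphi)$, $m(G)$, and $c(G)$ are all preserved under deletion of $w$, and induction on $G-w$ closes this subcase. The main case is pendant removal. Suppose $u$ is a pendant vertex of $G$ with unique neighbor $v$; the heart of the argument lies in the three identities
\[
r(G,\varphi)=r(G-u-v,\varphi)+2,\qquad m(G)=m(G-u-v)+1,\qquad c(G)\ge c(G-u-v).
\]
The rank identity is a Hermitian Schur complement at the $2\times 2$ principal block indexed by $\{u,v\}$: this block has off-diagonal entries $\varphi(e_{u,v})$ and $\overline{\varphi(e_{u,v})}$, so it has determinant $-1$ and is invertible. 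Because $u$'s only neighbor is $v$, the cross block $B$ linking $\{u,v\}$ to the remaining vertices has its $u$-row zero, and a short calculation shows $B^{*}A^{-1}B=0$, so the Schur complement is exactly $H(G-u-v,\varphi)$. The matching identity is the standard fact that some maximum matching must saturate the pendant $u$ through $uv$; the cyclomatic inequality holds because $u$ lies on no cycle (so removing $u$ preserves $c$) while removing $v$ can only destroy cycles. Substituting into the inductive bounds on $G-u-v$ yields both $r(G,\varphi)\le 2m(G)+c(G)$ and $r(G,\varphi)\ge 2m(G)-2c(G)$ with no slack.

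When $\delta(G)\ge 2$, the graph contains a cycle. If $G$ itself is a cycle $C_n$, its spectrum is $\{2\cos((2\pi k+\arg\alpha)/n):0\le k\le n-1\}$ where $\alpha$ is the gain product around the cycle, and a direct check gives $r(G,\varphi)\in\{n-2,n-1,n\}$, which lies in $[2m(G)-2c(G),2m(G)+c(G)]$ for either parity of $n$. Otherwise I would search for a vertex $v$ of degree at least $3$ on a cycle which is not a cut vertex: then $r(G,\varphi)\le r(G-v,\varphi)+2$ by Hermitian interlacing, $m(G-v)\ge m(G)-1$, and $c(G-v)=c(G)-\deg_G(v)+1\le c(G)-2$, so the inductive hypothesis on $G-v$ closes the step with room to spare in both directions.

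The hard part will be the final subcase, in which $\delta(G)\ge 2$ but every vertex of degree at least $3$ on a cycle is a cut vertex, so the non-cut choice is unavailable and the vertex-deletion argument falls short by one. I plan to resolve this by analyzing the block-tree of $G$: each non-trivial block is $2$-connected, and within any such block a maximum-degree vertex is non-cut in the block; an ear-decomposition or block-by-block reduction should then reduce matters to the $2$-connected setting where the above vertex-deletion argument applies, provided one can combine the block-level rank contributions with the global matching and cyclomatic invariants consistently.
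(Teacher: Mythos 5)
Your lower-bound argument is sound and is essentially the paper's: delete any vertex $v$ on a cycle, use $r(G,\varphi)\ge r(G-v,\varphi)$, $m(G-v)\ge m(G)-1$ and $c(G-v)\le c(G)-1$, and induct (for this direction you never need $v$ to be non-cut or of degree at least $3$, so your ``hard subcase'' does not even arise there). The pendant-vertex Schur-complement step and the explicit cycle computation are also correct.

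The genuine gap is in the upper bound, in precisely the subcase you defer. Your induction step certifies $r(G,\varphi)\le 2m(G)+c(G)$ only if some vertex $v$ satisfies $2\bigl(m(G)-m(G-v)\bigr)+\bigl(c(G)-c(G-v)\bigr)\ge 2$, i.e.\ either $c$ drops by at least $2$, or $m$ drops by $1$. A non-cut vertex of degree at least $3$ on a cycle guarantees the first alternative, but such a vertex need not exist, and when it does not, none of the vertices your scheme ever examines may work. Concretely, take two triangles joined by a path of length two: $T_1$ with vertex $x$, $T_2$ with vertex $y$, and edges $xp$, $py$. Here $\delta(G)=2$, $G$ is not a cycle, and the only vertices of degree at least $3$ are the cut vertices $x$ and $y$; deleting $x$ gives $m(G-x)=m(G)=3$ and $c(G-x)=c(G)-1=1$, so the inductive estimate degrades to $2m(G)+c(G)+1$. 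The vertex whose deletion does close the step is $p$, which has degree $2$ and lies on no cycle --- a vertex your case analysis never considers. Your proposed repair via the block tree is only a hope as written: neither $r(H(G,\varphi))$ nor $m(G)$ is additive over blocks sharing cut vertices, and a vertex that is non-cut inside its block can still be a cut vertex of $G$, so ``reducing to the $2$-connected setting'' is exactly the unproved step. The paper avoids induction entirely for this direction: it expands the coefficient $a_k$ of the characteristic polynomial as a sum of principal minors $\det H(G[S],\varphi)$, observes that each vanishes unless $G[S]$ has an elementary spanning subgraph $F$ (all components $K_2$ or cycles), and that any such $F$ forces $|S|\le 2m(F)+c(F)\le 2m(G)+c(G)$; hence $a_k=0$ for all $k>2m(G)+c(G)$ and the rank bound follows. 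You should either adopt that global counting argument or substantially enlarge your deletion case analysis (e.g.\ permitting deletion of degree-$2$ vertices off cycles and of cut vertices whose removal separates several cycle-containing pieces) before the upper bound can be considered proved.
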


Let $(C_{n}, \varphi)$ ($n \geq 3$) be a complex unit gain cycle for $C_{n}=v_{1}v_{2} \cdots v_{n}v_{1}$ and
$\varphi(C_{n}, \varphi)=\varphi(v_{1}v_{2} \cdots v_{n}v_{1} )=\varphi(v_{1}v_{2})\varphi(v_{2}v_{3})  \cdots \varphi(v_{n-1}v_{n})\varphi(v_{n}v_{1})$,
where $Re(x)$ be the real part of a complex number $x$.

The following Theorem \ref{T50} generalizes Theorems \ref{un2}, \ref{mixed2} and \ref{signed3}.

\begin{theorem}\label{T50}
For any connected complex unit gain graph $\Phi=(G,{\mathbb T},\varphi)$,
%Let $(G, \varphi)$ be a connected complex unit gain graph. Then
$r(G, \varphi)= 2m(G)-2c(G)$
if and only if all the following conditions hold:
%for $(G, \varphi)$:
\begin{enumerate}
\item[(i)] the cycles (if any) of $(G, \varphi)$ are pairwise vertex-disjoint;

\item[(ii)] for each cycle (if any) $(C_{l}, \varphi)$ of $(G, \varphi)$, $\varphi(C_{l}, \varphi)=(-1)^{\frac{l}{2}}$ and $l$ is even;

\item[(iii)] $m(T_{G})=m(G-O(G))$, where $O(G)$ is the set of vertices in cycles of $(G, \varphi)$.
\end{enumerate}
\end{theorem}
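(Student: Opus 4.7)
The plan is to prove both directions by induction on the cyclomatic number $c(G)$, leveraging three preliminary facts: (a) on any forest every $\mathbb{T}$-gain function is switching-equivalent to the trivial gain, hence $r(G,\varphi)=r(G)=2m(G)$ and the conclusions hold vacuously when $c(G)=0$; (b) vertex or edge deletion changes $r$ by at most $2$; and (c) for a single gain cycle $(C_l,\varphi)$, $r(C_l,\varphi)=l-2$ if and only if $l$ is even and $\varphi(C_l,\varphi)=(-1)^{l/2}$, while otherwise $r(C_l,\varphi)\in\{l-1,l\}$. I would derive (c) by switching all but one edge of $C_l$ to gain $1$ and reading off the eigenvalues $2\cos\theta_k$ with $e^{il\theta_k}=\varphi(C_l,\varphi)$.

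For the sufficiency direction, assume (i)--(iii). I pick any cycle $C=v_1v_2\cdots v_lv_1$ of $(G,\varphi)$; by (i) $C$ is vertex-disjoint from all other cycles, and by (ii) $l$ is even with $\varphi(C,\varphi)=(-1)^{l/2}$. Condition (iii) lets me choose a maximum matching $M$ of $G$ that induces a perfect matching on each cycle. Deleting the opposite pair $v_1,v_{l/2+1}$ yields $(G',\varphi')$ with $c(G')=c(G)-1$, $m(G')=m(G)-2$, and conditions (i)--(iii) inherited. By induction $r(G',\varphi')=2m(G)-2c(G)-2$, and the main computation is to prove the exact equality $r(G,\varphi)=r(G',\varphi')+2$, which then gives $r(G,\varphi)=2m(G)-2c(G)$.

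For necessity I argue contrapositively. If (i) fails, two cycles share a vertex; a direct analysis of the theta-graph or figure-eight subgraph $H$ shows $r(H,\varphi)>2m(H)-2c(H)$, and Cauchy interlacing together with Theorem~\ref{T30} applied to the complementary piece forces $r(G,\varphi)>2m(G)-2c(G)$. If (ii) fails for some cycle $C_l$, fact (c) gives $r(C_l,\varphi)\ge l-1>2m(C_l)-2c(C_l)$, and the same interlacing-propagation argument yields a strict inequality on $G$. If (iii) fails then $m(T_G)>m(G-O(G))$ (the reverse inequality is automatic, since any matching of $G-O(G)$ lifts to $T_G$), so any maximum matching of $G$ must pair some cyclic vertex with a non-cyclic neighbour, which prevents the corresponding cycle from contributing its full nullity of $2$ and again yields $r(G,\varphi)>2m(G)-2c(G)$.

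The hard part will be verifying the exact rank equality $r(G,\varphi)=r(G',\varphi')+2$ in the sufficiency step. The upper bound is immediate from (b), but the lower bound requires exhibiting two linearly independent null vectors of $H(G,\varphi)$ whose restrictions to the cycle are the two canonical null vectors of $H(C,\varphi)$. I would make this explicit using the form of the null vectors of an even gain cycle with gain $(-1)^{l/2}$, and show via condition (iii) that they extend by zero on $V(G)\setminus V(C)$: the ``decoupling'' hypothesis $m(T_G)=m(G-O(G))$ is exactly what prevents an edge from $\{v_1,v_{l/2+1}\}$ into the rest of $G$ from obstructing the extension. Carrying out this extension cleanly, especially when several cycles are simultaneously attached via pendant trees, is the technical heart of the argument.
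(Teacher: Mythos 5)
There are genuine gaps in both directions. In the sufficiency step your central bookkeeping claim fails: for a cycle of length $l\equiv 2\pmod 4$ with gain $(-1)^{l/2}$, deleting the opposite pair $v_1,v_{l/2+1}$ does \emph{not} give $m(G')=m(G)-2$ or $r(G,\varphi)=r(G',\varphi')+2$. Already for $G=C_6$ with $\varphi(C_6)=-1$ one has $r(C_6,\varphi)=4$ and $m=3$, while $G'$ is two copies of $P_2$, so $m(G')=2=m(G)-1$ and $r(G',\varphi')=4=r(G,\varphi)$; your induction therefore breaks at the base case. (Separately, you have the directions of the bounds reversed: exhibiting null vectors bounds the rank from \emph{above}, and the inequality $r(G,\varphi)\ge r(G',\varphi')+2$ is the one that would come from Theorem~\ref{T30}; the hard half is the upper bound, which property (b) does not give since deleting two vertices only yields $r(G,\varphi)\le r(G',\varphi')+4$.) The paper avoids all of this by never deleting cycle vertices in the sufficiency direction: using (iii) and Lemma~\ref{L53} it finds a pendant vertex $u$ whose neighbour $v$ is off every cycle, strips the pair $\{u,v\}$ (exact rank drop $2$ by Lemma~\ref{L13}, exact matching drop $1$ by Lemma~\ref{L19}), and inducts on $|V(T_G)|$ down to bare cycles, where Lemma~\ref{L12} finishes.

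In the necessity direction, the proposed propagation ``$r(H,\varphi)>2m(H)-2c(H)$ for an induced subgraph $H$, hence $r(G,\varphi)>2m(G)-2c(G)$ by interlacing plus Theorem~\ref{T30} on the complement'' does not go through: interlacing only gives $r(G,\varphi)\ge r(H,\varphi)$, and since $m(G)$ can exceed $m(H)+m(G-V(H))$ (edges between the two parts) while the rank need not be superadditive over the vertex partition, the strict inequality does not transfer. The paper instead obtains (i) essentially for free from the equality case of the proof of Theorem~\ref{T30}: lower-optimality forces $c(G-u)=c(G)-1$ for every cycle vertex $u$ (Lemma~\ref{L520}(iii)), which by Lemma~\ref{L23}(iii) rules out a vertex on two cycles. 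Conditions (ii) and (iii) are then proved by induction on $|V(G)|$ with two cases (a pendant vertex of $T_G$ that is a pendant vertex of $G$, versus a pendant cycle), resting on the unicyclic characteristic-polynomial computation of Lemma~\ref{L540} and the structural Lemmas~\ref{L55}--\ref{L57}; your one-sentence argument for the failure of (iii) (``prevents the cycle from contributing its full nullity of $2$'') is a heuristic with no supporting mechanism. To repair your plan you would need, at minimum, a correct reduction step for $l\equiv 2\pmod 4$ and a replacement for the interlacing propagation.
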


The following Theorem \ref{T40} generalizes Theorems \ref{un3}, \ref{mixed3} and \ref{signed3}.

\begin{theorem}\label{T40}
For any connected complex unit gain graph $\Phi=(G,{\mathbb T},\varphi)$,
%Let $(G, \varphi)$ be a connected complex unit gain graph. Then
$r(G, \varphi)= 2m(G)+c(G)$
if and only if all the following conditions hold:
%for $(G, \varphi)$:
\begin{enumerate}
\item[(i)] the cycles (if any) of $(G, \varphi)$ are pairwise vertex-disjoint;

\item[(ii)] for each cycle (if any) $(C_{l}, \varphi)$ of $(G, \varphi)$, $Re(\varphi(C_{l}, \varphi)) \neq 0$ and $l$ is odd;

\item[(iii)] $m(T_{G})=m(G-O(G))$,
where $O(G)$ is the set of vertices in cycles of $(G, \varphi)$.
\end{enumerate}
\end{theorem}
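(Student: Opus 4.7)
The plan is to prove both directions by induction on the cyclomatic number $c(G)$, paralleling the strategies used for signed and mixed graphs in~\cite{LSC,TFL,HSJ}. The key cycle-level fact driving everything is: for a complex unit gain cycle $(C_\ell, \varphi)$, one has $r(C_\ell, \varphi)=\ell$ exactly when $\ell$ is odd and $Re(\varphi(C_\ell,\varphi))\neq 0$; otherwise $r(C_\ell,\varphi)\le \ell-1$. This can be read off the eigenvalues $2\cos\!\big((2\pi k+\arg\varphi(C_\ell,\varphi))/\ell\big)$, $k=0,\ldots,\ell-1$, and explains precisely why condition (ii) takes the form stated. Note also that $2m(C_\ell)+c(C_\ell)=\ell$ holds exactly when $\ell$ is odd, so any cycle with $\ell$ even already fails to attain the upper bound locally.

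For the sufficiency direction, I would induct on $c(G)$. The base case $c(G)=0$ reduces to a tree, for which a diagonal unitary switching eliminates all gains and yields $r(G,\varphi)=r(G)=2m(G)$, with (ii) and (iii) vacuous. For $c(G)\ge 1$, using (i) one can pick a cycle $(C_\ell,\varphi)$ whose image in $T_G$ is an \emph{end} cyclic vertex (pendant or isolated in $T_G$). Using (iii), one can select a maximum matching of $G$ that uses $(\ell-1)/2$ edges inside $C_\ell$. A Schur-complement-type computation around $V(C_\ell)$, together with $r(C_\ell,\varphi)=\ell$ guaranteed by (ii), gives
\[
r(G,\varphi)=r(G',\varphi)+\ell,
\]
where $G'$ is obtained by deleting $V(C_\ell)$ and any pendant edges that become isolated. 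One then verifies that $G'$ inherits (i)-(iii) with $c(G')=c(G)-1$ and $m(G')=m(G)-(\ell-1)/2$, and the induction closes.

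For the necessity direction, I would argue by contrapositive. If two cycles share a vertex, a direct computation on the induced theta-type subgraph produces a strict local rank deficit which cannot be absorbed by the attached forest. If some cycle $(C_\ell,\varphi)$ has $\ell$ even or $Re(\varphi(C_\ell,\varphi))=0$, then $r(C_\ell,\varphi)\le \ell-1$ propagates to a strict global inequality $r(G,\varphi)<2m(G)+c(G)$ via Theorem~\ref{T30} applied to the components of $G-V(C_\ell)$. Finally, since $G-O(G)$ is effectively a sub-forest of $T_G$, one always has $m(T_G)\ge m(G-O(G))$; if this inequality is strict, an optimal matching of $T_G$ must saturate some cyclic vertex, and a matching-and-interlacing argument in the spirit of~\cite{HSJ} shows the rank drops by at least $1$ below $2m(G)+c(G)$.

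The main obstacle is the sufficiency step, specifically the simultaneous bookkeeping of how $r$, $m$ and $c$ change when an end-cycle is removed, and verifying that conditions (i)-(iii) are preserved by the reduction so that the induction runs cleanly. A secondary but delicate point is the necessity of (iii): converting the strict matching-number gap $m(T_G)>m(G-O(G))$ into a strict rank inequality requires a careful matching-and-rank argument adapted from the signed graph case in~\cite{HSJ} to the full $\mathbb{T}$-gain setting, where gains must be compatibly tracked through the switching operations.
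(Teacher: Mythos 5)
Your overall route differs from the paper's (which proves sufficiency by showing directly that the coefficient $a_{2m+c}$ of the characteristic polynomial is nonzero, via an analysis of elementary spanning subgraphs and Lemma~\ref{L045}, and proves necessity by induction on the order using pendant-vertex deletion, with Lemma~\ref{L043} for the unicyclic base case and Lemma~\ref{L044} to guarantee a pendant vertex exists when $c(G)\ge 2$), and as written your route has two genuine gaps. First, the pivotal identity $r(G,\varphi)=r(G',\varphi)+\ell$ for deleting a pendant odd cycle does not follow from the Schur complement you invoke: writing $H(G,\varphi)$ in block form with $A=H(C_\ell,\varphi)$ invertible and $B$ the (single-entry) attachment block, the Schur complement is $H(K,\varphi)-tE_{yy}$ where $t=\det H(C_\ell-x,\varphi)/\det H(C_\ell,\varphi)$; since $C_\ell-x$ is an even path with a perfect matching, $t\neq 0$, so you are left with a gain graph carrying a nonzero diagonal (loop) entry at the attachment vertex $y$, which is not the adjacency matrix of $(G',\varphi)$ and can change the rank by $1$ (e.g.\ an odd cycle with one pendant vertex gives $r=\ell+1$, not $\ell$). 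Making this reduction work requires either extending the whole framework to diagonally perturbed matrices or proving, under hypothesis (iii), that the perturbation does not change the rank --- neither is done. Second, in the necessity direction your treatment of a failure of (i) rests on a ``local rank deficit'' of an induced theta subgraph propagating to $G$; but induced subgraphs only give the inequality $r(H)\le r(G)$ (Lemma~\ref{L16}), which is the wrong direction: a rank deficit of an induced subgraph imposes no upper bound on $r(G)$, so it cannot by itself force $r(G)<2m(G)+c(G)$. The paper circumvents this with the counting argument of Lemma~\ref{L044} (via Lemma~\ref{L24}) showing that a graph with no pendant vertex and $c\ge 2$ is never upper-optimal, which then drives the induction through Lemma~\ref{L042}.

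A smaller but real error: your opening ``key cycle-level fact'' asserts $r(C_\ell,\varphi)\le \ell-1$ whenever $\ell$ is even or $Re(\varphi(C_\ell,\varphi))=0$, but by Lemma~\ref{L12} an even cycle of Type~B has full rank $\ell$; such cycles fail to be upper-optimal only because $2m+c=\ell+1$ there, not because of a rank deficiency. This does not affect the statement of (ii) but it would mislead the case analysis in your necessity argument. The matching-theoretic part of your plan for (iii) is plausible in outline and close in spirit to the paper's use of Lemmas~\ref{L19} and~\ref{L22}, but it too is only sketched.
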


Another upper bound and lower bound of the rank of a complex unit gain graph in terms of other parameters are obtained.

\begin{theorem}\label{T60}
For any connected complex unit gain graph
$\Phi=(G,{\mathbb T},\varphi)$, we have
$$
2\max_{V_0} m(G-V_{0}) \leq r(G, \varphi) \leq 2m(G)+b(G),
$$
where the maximum value of $m(G-V_{0})$
is taken over all proper subsets $V_0$ of $V(G)$
such that $G-V_0$ is acyclic and
$b(G)$ is the minimum integer $|S|$ such that $G-S$ is bipartite
for $S \subset V(G)$.
% and $V_0$ isa minimal vertex set of $G$ such that
\end{theorem}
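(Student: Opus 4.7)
The plan is to prove the lower and upper bounds separately, using different techniques. For the lower bound, the central lemma is that every complex unit gain forest $(F,\varphi)$ satisfies $r(F,\varphi)=2m(F)$: since $F$ has no cycles it is balanced, so $\varphi$ can be switched to the trivial all-ones gain by a diagonal unitary conjugation (which preserves rank), reducing $H(F,\varphi)$ to $A(F)$, whose rank is known to equal $2m(F)$. Now fix any proper $V_0\subset V(G)$ with $F:=G-V_0$ acyclic and let $M$ be a maximum matching of $F$, covering a vertex set $V_M$ of size $2m(F)$. The induced subforest $F[V_M]$ has a perfect matching, so by the lemma $H(F[V_M],\varphi)$ has full rank $2m(F)$; since $F[V_M]=G[V_M]$, this matrix is a principal submatrix of $H(G,\varphi)$, so $r(G,\varphi)\geq 2m(F)=2m(G-V_0)$. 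Taking the maximum over $V_0$ gives the desired lower bound.

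For the upper bound I would proceed by induction on $b(G)$. In the base case $b(G)=0$ the graph $G$ is bipartite with parts $X,Y$, and the Hermitian matrix $H(G,\varphi)$ has block form with zero diagonal blocks on $X$ and $Y$ and an off-diagonal block $B$ coupling $X$ to $Y$, so $r(G,\varphi)=2\,\mathrm{rank}(B)$. By K\"onig's theorem there is a vertex cover of $G$ of size $m(G)$ consisting of $k_1$ rows and $k_2$ columns of $B$ with $k_1+k_2=m(G)$, and the decomposition $B=B_R+B_C$ (where $B_R$, $B_C$ have nonzero entries only in those rows, respectively those columns) gives $\mathrm{rank}(B)\leq k_1+k_2=m(G)$, hence $r(G,\varphi)\leq 2m(G)$. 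For the inductive step, choose a minimum odd cycle transversal $S$ of $G$ and a vertex $v\in S$; then $b(G-v)\leq b(G)-1$ and the induction hypothesis yields $r(G-v,\varphi)\leq 2m(G-v)+b(G)-1$.

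The main obstacle is this inductive step. The crude bound $r(G,\varphi)\leq r(G-v,\varphi)+2$ combined with $m(G-v)\leq m(G)$ only gives $r(G,\varphi)\leq 2m(G)+b(G)+1$, off by one. To close the gap one must choose $v\in S$ so that either (a) $v$ lies in every maximum matching of $G$, giving $m(G)=m(G-v)+1$ and thus compensating for the $+2$ rank jump, or (b) the rank jump on deleting $v$ is only $+1$, which can be detected from the Hermitian structure whenever the column of $v$ in $H(G,\varphi)$ already lies in the column span of the remaining rows. Ensuring that some $v\in S$ satisfies (a) or (b), equivalently replacing the single deletion by a simultaneous deletion of $v$ and its matching partner $u$ and controlling the $2\times 2$ block rank drop by a Schur-complement argument, will likely require a Gallai--Edmonds analysis of $G$; this coupling between rank and matching is the hardest part of the argument.
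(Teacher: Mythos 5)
Your lower bound argument is correct and is essentially the paper's: the paper simply combines $r(G,\varphi)\ge r(G-V_0,\varphi)$ (rank is monotone under taking induced subgraphs, Lemma~\ref{L14}/\ref{L16}) with $r(T,\varphi)=2m(T)$ for acyclic $(T,\varphi)$ (Lemma~\ref{L15}); your detour through the nonsingular principal submatrix on the vertices saturated by a maximum matching of $G-V_0$ proves the same inequality. The upper bound, however, is not proved. Your induction on $b(G)$ loses exactly one unit in the inductive step, and the device you propose to recover it --- finding a vertex $v$ in a minimum odd cycle transversal $S$ with either $m(G-v)=m(G)-1$ or $r(G,\varphi)\le r(G-v,\varphi)+1$ --- is precisely the content you would need to establish and do not. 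There is no reason a minimum transversal must contain a vertex covered by every maximum matching, and the rank drop of a Hermitian matrix upon deleting a row and the corresponding column can be $0$, $1$ or $2$ with no a priori link to the matching structure; you acknowledge this yourself by deferring to an unspecified Gallai--Edmonds analysis. As written, your argument only yields $r(G,\varphi)\le 2m(G)+b(G)+1$.

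The paper avoids induction entirely (Lemma~\ref{SL1}). Let $s=r(G,\varphi)$ and pick a nonsingular $s\times s$ submatrix $A_0$ of $H(G,\varphi)$, not necessarily principal. A nonzero term in the permutation expansion of $\det(A_0)$ selects $s$ edges $v_{i_t}v_{j_{\pi(t)}}$; since each vertex occurs at most once as a row index and at most once as a column index of $A_0$, the subgraph $G_0$ formed by these edges has maximum degree at most $2$, hence is a disjoint union of paths and cycles with $|E(G_0)|=s$. For such a graph $|E(G_0)|\le 2m(G_0)+oc(G_0)$, where $oc(G_0)$ counts the odd-cycle components, and every odd cycle of $G_0$ must meet any $S$ for which $G-S$ is bipartite, so $s\le 2m(G)+oc(G_0)\le 2m(G)+|S|$. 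This short counting argument (which works for an arbitrary matrix supported on $E(G)$, not just the Hermitian gain matrix) is exactly what your inductive scheme is missing; to salvage your route you would have to supply the coupling between rank drop and matching that you only sketch.
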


The rest of this paper is organized as follows. In Section 2, some useful lemmas are listed which will be used in the proof of our main results.
The proof of the Theorem \ref{T30} is presented in Section 3. In Section 4, the proof for Theorem \ref{T50} is given and the properties of the extremal complex unit gain graphs which attain the lower bound of Theorem \ref{T30} are discussed. In Section 5, the properties of the extremal complex unit gain graphs which attain the upper bound of Theorem \ref{T30} are researched, and the proof for Theorem \ref{T40} is shown. The proof of Theorem \ref{T60} and the discuss for further study are given in Section 6.

\section{Preliminaries}

We need the following known results and useful lemmas to prove our main result, which will be used in next sections.

For any $v_i\in V$, let $d_G(v_i)$ (or simply $d(v_i)$)
denote the {\it degree} of $v_i$ in $G$.
A vertex $v_i$ in $G$ is called a {\it pendant vertex}
if $d(v_i)=1$,
and is called a {\it quasi-pendant vertex}
if $d(v_i)\ge 2$ and $v_i$ is adjacent to
some pendant vertex.
%a pendant vertex in $G$ unless it is a pendant vertex.

For any $S\subseteq V$,
let $G[S]$ denote the subgraph of $G$ induced by $S$,
and let $G-S$ denote $G[V-S]$ when $S\ne V$.
For any $x\in V$, let $G-x$ simply denote $G-\{x\}$.
For an induced subgraph $H$ and $u\in V-V(H)$,
let $H+u$ denote $G[V(H)\cup \{u\}]$.
A spanning subgraph $G_0$ of $G$ is called
{\it elementary} if each component of $G_0$
is either $K_{2}$ or a cycle.

\begin{lemma} \label{L16}{\rm\cite{YGH}}
Let $(G, \varphi)$ be a complex unit gain graph.
\begin{enumerate}
\renewcommand{\theenumi}{\rm (\roman{enumi})}
\item If $(H,\varphi)$ is an induced subgraph of $(G, \varphi)$,
then $r(H, \varphi) \leq r(G, \varphi)$.

\item If $(G_{1}, \varphi), (G_{2}, \varphi), \cdots, (G_{t}, \varphi)$ are the connected components of $(G, \varphi)$, then $r(G, \varphi)=\sum_{i=1}^{t}r(G_{i}, \varphi)$.

\item  $r(G, \varphi)\geq 0$ with equality if and only if $(G, \varphi)$ is an empty graph.
\end{enumerate}
\end{lemma}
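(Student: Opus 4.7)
The plan is to handle the three parts separately, each via standard linear-algebraic facts about Hermitian matrices, applied to $H(G,\varphi)$, since $H(G,\varphi)$ is Hermitian and its eigenvalues are real.

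For part (i), the first step is to observe that if $(H,\varphi)$ is the induced subgraph of $(G,\varphi)$ on a vertex subset $S\subseteq V(G)$, then $H(H,\varphi)$ is exactly the principal submatrix of $H(G,\varphi)$ obtained by deleting the rows and columns indexed by $V(G)\setminus S$. Both matrices are Hermitian, so I would invoke the Cauchy interlacing theorem for complex Hermitian matrices: if $B$ is a principal submatrix of a Hermitian matrix $A$, then the eigenvalues of $B$ interlace those of $A$, which in particular yields $p^{+}(H,\varphi)\le p^{+}(G,\varphi)$ and $n^{-}(H,\varphi)\le n^{-}(G,\varphi)$. Adding these two inequalities gives $r(H,\varphi)=p^{+}(H,\varphi)+n^{-}(H,\varphi)\le p^{+}(G,\varphi)+n^{-}(G,\varphi)=r(G,\varphi)$, as required.

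For part (ii), I would relabel the vertices of $G$ so that vertices lying in the same connected component appear consecutively. This corresponds to conjugating $H(G,\varphi)$ by a permutation matrix, which preserves rank, and exhibits $H(G,\varphi)$ as a block-diagonal matrix with diagonal blocks $H(G_{1},\varphi),\ldots,H(G_{t},\varphi)$; the off-diagonal blocks vanish because $G_i$ and $G_j$ share no edge when $i\ne j$, so every entry $a_{k,l}$ with $v_k\in V(G_i)$ and $v_l\in V(G_j)$ equals $0$. Since the rank of a block-diagonal matrix equals the sum of the ranks of its diagonal blocks, we obtain $r(G,\varphi)=\sum_{i=1}^{t}r(G_{i},\varphi)$. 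For part (iii), nonnegativity of rank is immediate from the definition. For the equality statement, $r(G,\varphi)=0$ is equivalent to $H(G,\varphi)$ being the zero matrix. Since the diagonal of $H(G,\varphi)$ is already zero, this reduces to $a_{i,j}=\varphi(e_{i,j})=0$ for every edge $v_iv_j\in E(G)$. However each gain lies in $\mathbb{T}$ and therefore has modulus $1$, so no gain can vanish. Hence $H(G,\varphi)=0$ if and only if $E(G)=\emptyset$, that is, $(G,\varphi)$ is an empty graph.

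The only delicate point, and thus the step I would be most careful about, is the Cauchy interlacing inequality invoked in (i): the standard real-symmetric formulation transfers verbatim to the complex Hermitian setting via the Courant--Fischer min-max characterization of eigenvalues, and I would cite a standard reference such as Horn and Johnson's \emph{Matrix Analysis} for the complex version to keep the argument self-contained. Parts (ii) and (iii) are essentially bookkeeping once the Hermitian structure of $H(G,\varphi)$ and the unimodularity of the gains are in hand.
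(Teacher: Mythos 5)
Your proposal is correct and complete. Note that the paper itself offers no proof of this lemma---it is quoted verbatim from the cited reference [Yu--Qu--Tu], so there is no internal argument to compare against; your write-up supplies exactly the standard justification. One small remark on part (i): Cauchy interlacing is more machinery than the statement requires, since for any matrix $A$ a submatrix obtained by selecting a subset of rows and then a subset of columns has rank at most $\operatorname{rank}(A)$, which immediately gives $r(H,\varphi)\le r(G,\varphi)$ for the principal submatrix $H(H,\varphi)$. That said, your interlacing route proves the stronger monotonicity statements $p^{+}(H,\varphi)\le p^{+}(G,\varphi)$ and $n^{-}(H,\varphi)\le n^{-}(G,\varphi)$ separately, which is what the cited source actually establishes and which is occasionally useful elsewhere in this line of work, so the extra strength is not wasted. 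Parts (ii) and (iii) are handled exactly as one would expect: the block-diagonal decomposition under a permutation similarity and the unimodularity of the gains are the right observations, and your reduction of $r(G,\varphi)=0$ to $E(G)=\emptyset$ is airtight.
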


If a cycle in $G$ consists of edges
$v_1v_2,v_2v_3,\cdots,v_{n}v_1$, it
has two directions:
$v_{1}v_{2} \cdots v_{n}v_{1}$
and $v_{1}v_{n}v_{n-1} \cdots v_{2}v_{1}$.
In the following, assume that each cycle has an order.
If $C$ is the order cycle $v_{1}v_{2} \cdots v_{n}v_{1}$,
then $C^*$ is the order cycle
$v_{1}v_{n}v_{n-1} \cdots v_{2}v_{1}$.
We call $C^*$ the {\it dual order cycle} of $C$.
For any order cycle $C_n=v_{1}v_{2} \cdots v_{n}v_{1}$,
define
$$
\varphi(C_n)=
\prod_{i=1}^n \varphi(v_iv_{i+1}).
$$
By the definition of $\varphi$,
$\varphi(C_n)$ and  $\varphi(C^*_n)$
have the following relation.

\begin{lemma}\label{le2-1}
Let $C_n:v_{1}v_{2} \cdots v_{n}v_{1}$
be an order cycle in a complex unit gain graph
$(G,\varphi)$.
Then $\varphi(C_n)$ and $\varphi(C^*_n)$
are  conjugate numbers
in $\mathbb{T}$.
\end{lemma}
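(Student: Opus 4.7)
The plan is to exploit the single defining property of the gain function, namely $\varphi(e_{i,j})=\varphi(e_{j,i})^{-1}$, together with the fact that $\varphi$ takes values in $\mathbb{T}$. Since $|z|=1$ implies $z^{-1}=\bar z$, the inverse relation can be rewritten as $\varphi(e_{j,i})=\overline{\varphi(e_{i,j})}$, which is exactly the bridge needed between the two traversals of the cycle.

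Concretely, I would write $\varphi(C_n^*)=\prod_{i=1}^n \varphi(v_{i+1}v_i)$ (indices mod $n$), and then apply the observation above edge by edge to obtain
$$
\varphi(C_n^*) \;=\; \prod_{i=1}^{n} \varphi(v_{i+1}v_i) \;=\; \prod_{i=1}^{n} \overline{\varphi(v_iv_{i+1})} \;=\; \overline{\prod_{i=1}^{n} \varphi(v_iv_{i+1})} \;=\; \overline{\varphi(C_n)},
$$
where the second-to-last equality uses that complex conjugation is multiplicative. This already gives the conjugation statement. Finally, because each factor $\varphi(v_iv_{i+1})$ lies in $\mathbb{T}$ and $\mathbb{T}$ is closed under multiplication, $\varphi(C_n)\in\mathbb{T}$, and therefore its conjugate $\varphi(C_n^*)$ is also in $\mathbb{T}$.

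There is really no main obstacle here: the lemma is essentially a restatement of the definition of the gain function combined with the elementary identity $z^{-1}=\bar z$ for $z\in\mathbb{T}$. The only point worth being careful about is to keep track of the orientation of each edge when reversing the cyclic order, so that one correctly identifies $\varphi(v_{i+1}v_i)$ as the inverse of $\varphi(v_iv_{i+1})$ rather than confusing it with $\varphi(v_iv_{i+1})$ itself.
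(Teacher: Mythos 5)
Your argument is correct and is exactly the computation the paper has in mind (the paper in fact states Lemma~\ref{le2-1} without proof, treating it as immediate from the definition): rewrite $\varphi(e_{j,i})=\varphi(e_{i,j})^{-1}=\overline{\varphi(e_{i,j})}$ using $|z|=1$, then pull the conjugation through the product. Nothing is missing.
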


Each order cycle $C_n$ is in one of the five types defined below:
$$
\left\{
    \begin{array}{ll}
      \rm{Type~A},
&\hbox{if $n$ is even and $\varphi(C_{n})=(-1)^{n/2}$;} \\
      \rm{Type~B}, & \hbox{if $n$ is even
      and $\varphi(C_{n})\ne (-1)^{n/2}$}; \\
      \rm{Type~C}, & \hbox{if $n$ is odd and
$Re((-1)^{\frac{n-1}{2}}\varphi(C_{n}, \varphi))>0$;} \\
      \rm{Type~D}, & \hbox{if $n$ is odd and $Re((-1)^{\frac{n-1}{2}}\varphi(C_{n}, \varphi))<0$;} \\
      \rm{Type~E}, & \hbox{if $n$ is odd and
$Re(\varphi(C_{n}, \varphi))=0$,}
    \end{array}
  \right.
$$
where $Re(z)$ is the real part of a complex number $z$.

By the above definition of types
and Lemma~\ref{le2-1},
each pair of dual order cycles $C$ and $C^*$ are in the same type.

\begin{lemma}\label{le2-2}
For any order cycle $C$ in a complex unit gain graph
$(G,\varphi)$,
$C$ and $C^*$ are in the same type.
\end{lemma}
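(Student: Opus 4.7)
The plan is to deduce this directly from Lemma \ref{le2-1} together with the elementary identity $\operatorname{Re}(\overline{z})=\operatorname{Re}(z)$, since the five types are defined only through the value $\varphi(C_n)$ and through real parts of certain scalar multiples of that value. By Lemma \ref{le2-1}, we have $\varphi(C^*)=\overline{\varphi(C)}$, so the entire argument reduces to checking that each defining condition is invariant under complex conjugation.

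First I would dispose of the even case. When $n$ is even, the quantity $(-1)^{n/2}$ is a real number, hence fixed by conjugation. Therefore
$$
\varphi(C^*)=(-1)^{n/2} \iff \overline{\varphi(C)}=(-1)^{n/2} \iff \varphi(C)=(-1)^{n/2},
$$
which shows that $C$ is of Type~A iff $C^*$ is of Type~A; by mutual exclusion $C$ is of Type~B iff $C^*$ is of Type~B.

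Next I would treat the odd case. When $n$ is odd, $(-1)^{(n-1)/2}\in\{-1,1\}\subset\mathbb{R}$, so
$$
(-1)^{(n-1)/2}\varphi(C^*)=(-1)^{(n-1)/2}\overline{\varphi(C)}=\overline{(-1)^{(n-1)/2}\varphi(C)},
$$
and taking real parts on both sides (using $\operatorname{Re}(\overline{z})=\operatorname{Re}(z)$) gives
$$
\operatorname{Re}\bigl((-1)^{(n-1)/2}\varphi(C^*)\bigr)=\operatorname{Re}\bigl((-1)^{(n-1)/2}\varphi(C)\bigr).
$$
In particular the sign of this real number (positive, negative, or the corresponding condition $\operatorname{Re}(\varphi(C))=0$ for Type~E, which is likewise preserved under conjugation) is the same for $C$ and $C^*$, so $C$ and $C^*$ belong to the same one of Types~C, D, E.

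There is no real obstacle here: the lemma is a transparent corollary of Lemma \ref{le2-1}, and the proof is essentially a one-line observation that the five defining predicates only involve real numbers and real parts, both of which are fixed by complex conjugation. I would present it as the short case analysis above.
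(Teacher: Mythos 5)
Your proposal is correct and follows the same route as the paper, which simply invokes Lemma \ref{le2-1} to get $\mathrm{Re}(\varphi(C))=\mathrm{Re}(\varphi(C^*))$ and concludes; you merely spell out the (easy) case check that each type's defining predicate is invariant under conjugation, including the Type~A/B condition which also uses that $\varphi(C^*)=\overline{\varphi(C)}$ itself. No gaps.
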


\begin{proof}
By Lemma~\ref{le2-1}, $Re(\varphi(C))=Re(\varphi(C^*))$.
Thus the conclusion holds.
\end{proof}

By Lemma~\ref{le2-2},
whenever the type of a cycle is mentioned,
it is not necessary to know its order.

\iffalse
\begin{definition} \label{L053}{\rm\cite{LUY}}
Let $(C_{n}, \varphi)$ ($n \geq 3$) be a complex unit gain cycle with $C_{n}=v_{1}v_{2} \cdots v_{n}v_{1}$ and
$$\varphi(C_{n}, \varphi)=\varphi(v_{1}v_{2} \cdots v_{n}v_{1} )=\varphi(v_{1}v_{2})\varphi(v_{2}v_{3})  \cdots \varphi(v_{n-1}v_{n})\varphi(v_{n}v_{1}).$$
Then $(C_{n}, \varphi)$ is said to be one of the following five Types:
$$\left\{
    \begin{array}{ll}
      \rm{Type~A}, & \hbox{if $\varphi(C_{n}, \varphi)=(-1)^{\frac{n}{2}}$ and $n$ is even;} \\
      \rm{Type~B}, & \hbox{if $\varphi(C_{n}, \varphi) \neq (-1)^{\frac{n}{2}}$ and $n$ is even;} \\
      \rm{Type~C}, & \hbox{if $Re((-1)^{\frac{n-1}{2}}\varphi(C_{n}, \varphi))>0$ and $n$ is odd;} \\
      \rm{Type~D}, & \hbox{if $Re((-1)^{\frac{n-1}{2}}\varphi(C_{n}, \varphi))<0$ and $n$ is odd;} \\
      \rm{Type~E}, & \hbox{if $Re((-1)^{\frac{n-1}{2}}\varphi(C_{n}, \varphi))=0$ and $n$ is odd.}
    \end{array}
  \right.
$$
Where $Re(x)$ is the real part of a complex number $x$.
\end{definition}
\fi

If $G$ is a cycle $C_n$, the number of positive
(resp. negative) eigenvalues of $(G,\varphi)$
has been determined.

\begin{lemma} \label{L12}{\rm\cite{YGH}}
Let $(C_{n}, \varphi)$ be a complex unit gain cycle of order $n$. Then
$$(p^{+}(C_{n}, \varphi), n^{-}(C_{n}, \varphi))=\left\{
             \begin{array}{ll}
               (\frac{n-2}{2}, \frac{n-2}{2}), & \hbox{if $C_n$ is of \rm{Type~A};} \\
               (\frac{n}{2}, \frac{n}{2}), & \hbox{if $C_n$ is of \rm{Type~B};} \\
               (\frac{n+1}{2}, \frac{n-1}{2}), & \hbox{if $C_n$ is of \rm{Type~C};} \\
               (\frac{n-1}{2}, \frac{n+1}{2}), & \hbox{if $C_n$ is of \rm{Type~D};} \\
               (\frac{n-1}{2}, \frac{n-1}{2}), & \hbox{if $C_n$ is of \rm{Type~E}.}
             \end{array}
           \right.
$$
\end{lemma}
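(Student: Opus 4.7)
The plan is a three-step reduction: bring $H(C_{n},\varphi)$ to a normal form by a unitary diagonal ``switching'', diagonalize the resulting circulant-like matrix by a Fourier-type conjugation, and then count positive, negative and zero cosines in each of the five types from the explicit eigenvalue formula produced.

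Write $C_{n}=v_{1}v_{2}\cdots v_{n}v_{1}$ and set $\alpha=\varphi(C_{n},\varphi)$. Define $u_{1}=1$ and inductively $u_{i+1}=u_{i}\,\overline{\varphi(v_{i}v_{i+1})}\in\mathbb{T}$ for $1\le i\le n-1$. Then $U=\mathrm{diag}(u_{1},\ldots,u_{n})$ is unitary, so $U^{*}H(C_{n},\varphi)U$ has the same spectrum as $H(C_{n},\varphi)$; by construction its path-edge entries are all $1$, while the closing $(n,1)$ entry $\bar{u}_{n}\varphi(v_{n}v_{1})$ telescopes to exactly $\alpha$. Next pick $\omega\in\mathbb{T}$ with $\omega^{n}=\alpha$, write $\omega=e^{\sqrt{-1}\theta}$, and conjugate further by the unitary $D=\mathrm{diag}(1,\omega,\omega^{2},\ldots,\omega^{n-1})$. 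Every cyclic super-diagonal entry of $D^{*}U^{*}H(C_{n},\varphi)UD$ then equals $\omega$---in the wrap-around position one uses $\bar{\omega}^{n-1}\alpha=\omega$---so this conjugated matrix is $\omega S+\bar{\omega}S^{\top}$, where $S$ is the cyclic shift. Since the eigenvalues of $S$ are $e^{2\pi\sqrt{-1}k/n}$, one obtains the explicit spectrum
\[
\lambda_{k}\;=\;2\cos\!\Bigl(\theta+\frac{2\pi k}{n}\Bigr),\qquad k=0,1,\ldots,n-1.
\]

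Now I read off the sign counts. A zero eigenvalue requires $\theta+\tfrac{2\pi k}{n}\in\tfrac{\pi}{2}+\pi\mathbb{Z}$; raising to the $n$th exponential this forces $\alpha=(-1)^{n/2}$ for even $n$ (with two solutions $k,\,k+\tfrac{n}{2}$) and $Re(\alpha)=0$ for odd $n$ (with exactly one solution $k$). When $n$ is even, the involution $k\mapsto k+\tfrac{n}{2}$ sends $\lambda_{k}$ to $-\lambda_{k}$ via $\cos(\phi+\pi)=-\cos\phi$, so in Types~A and B the spectrum is symmetric about $0$, yielding $\bigl(\tfrac{n-2}{2},\tfrac{n-2}{2}\bigr)$ and $\bigl(\tfrac{n}{2},\tfrac{n}{2}\bigr)$ respectively. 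When $n$ is odd no such involution exists, and I argue by continuity: as $\alpha$ varies on the open arc $\{\alpha\in\mathbb{T}:Re((-1)^{(n-1)/2}\alpha)>0\}$, the eigenvalues move continuously, none crosses $0$ (by the zero-eigenvalue characterization), so the signature $p^{+}-n^{-}$ is constant on that arc. Evaluating at the representative $\alpha=(-1)^{(n-1)/2}$---whose canonical-form matrix is the real adjacency of an ordinary odd cycle when $n\equiv 1\pmod{4}$, or of the ``signed'' odd cycle with one edge of weight $-1$ when $n\equiv 3\pmod{4}$---one reads off signature $+1$, which together with $r(C_{n},\varphi)=n$ gives $\bigl(\tfrac{n+1}{2},\tfrac{n-1}{2}\bigr)$ in Type~C. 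The mirror argument on the opposite arc yields Type~D, and Type~E lies on the common boundary $Re(\alpha)=0$, contributing exactly one zero eigenvalue while the remaining $n-1$ eigenvalues split evenly by continuity from both neighbouring arcs.

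The main obstacle is the odd case. The even case is disposed of cleanly by a single spectral involution, but for odd $n$ no such symmetry is available, so one must track $p^{+}-n^{-}$ as a locally constant function of $\alpha\in\mathbb{T}$ and pin it down at a convenient real representative in each arc. The delicate verification is that no eigenvalue crosses $0$ within an arc, which reduces cleanly to the zero-eigenvalue characterization in Step~3; once that is established, the five-type count follows uniformly from the explicit cosine formula for $\lambda_{k}$.
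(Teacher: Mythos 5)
The paper offers no proof of Lemma~\ref{L12} at all: it is imported verbatim from \cite{YGH}, so there is nothing in the text to compare your argument against. On its own merits your proof is correct and self-contained. The switching step is sound (the telescoping gives the closing entry $\bar{u}_{n}\varphi(v_{n}v_{1})=\varphi(C_{n},\varphi)=\alpha$ exactly), the second conjugation does reduce the matrix to $\omega S+\bar{\omega}S^{\top}$ with spectrum $\lambda_{k}=2\cos(\theta+2\pi k/n)$, and your zero-eigenvalue criterion ($\alpha=(-1)^{n/2}$ with two roots for even $n$; $Re(\alpha)=0$ with one root for odd $n$) is what makes the five types fall out cleanly. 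The even case via the involution $k\mapsto k+n/2$ and the odd case via local constancy of the signature on each open arc, pinned down at the real representatives $\alpha=\pm(-1)^{(n-1)/2}$, both check out (I verified the counts $(\frac{n+1}{2},\frac{n-1}{2})$ at those representatives for $n\equiv 1$ and $n\equiv 3 \pmod 4$). The one spot that deserves an explicit sentence is Type~E: continuity from a single arc only shows the boundary inertia is $(p,q)$ with $(p+1,q)$ or $(p,q+1)$ matching that arc's signature, and it is the comparison with \emph{both} neighbouring arcs (signatures $+1$ and $-1$) that forces $p=q=\frac{n-1}{2}$. Also state explicitly that a continuous branch of the $n$th root $\omega$ can be chosen along each arc and that the eigenvalue multiset is independent of the branch, so that the signature really is a locally constant function of $\alpha$. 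With those two remarks added, this is a complete proof, arguably cleaner than a case-by-case characteristic-polynomial computation.
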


\begin{lemma} \label{L1500}{\rm\cite{YGH}}
Let $(T, \varphi)$ be an acyclic complex unit gain graph.
Then $r(T, \varphi)= r(T)$.
\end{lemma}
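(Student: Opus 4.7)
The plan is to show that $H(T,\varphi)$ is unitarily similar to the ordinary adjacency matrix $A(T)$, from which $r(T,\varphi)=r(T)$ follows immediately since unitary similarity preserves rank (indeed, the whole spectrum).

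The tool I would use is gauge (switching) equivalence. Given a unit complex vector $\zeta=(\zeta_{1},\ldots,\zeta_{n})\in\mathbb{T}^{n}$, form the diagonal unitary matrix $D=\mathrm{diag}(\zeta_{1},\ldots,\zeta_{n})$ and consider $D^{*}H(T,\varphi)D$. Its $(i,j)$-entry equals $\overline{\zeta_{i}}\,\varphi(e_{i,j})\,\zeta_{j}$ whenever $v_{i}v_{j}\in E(T)$, and $0$ otherwise. Defining a new gain $\varphi'(e_{i,j})=\overline{\zeta_{i}}\,\varphi(e_{i,j})\,\zeta_{j}$, one easily checks that $\varphi'(e_{i,j})\in\mathbb{T}$ and $\varphi'(e_{j,i})=\overline{\varphi'(e_{i,j})}$, so $(T,\varphi')$ is again a complex unit gain graph on $T$ and $H(T,\varphi')=D^{*}H(T,\varphi)D$. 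Thus $r(T,\varphi')=r(T,\varphi)$.

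The main step is to choose $\zeta$ so that $\varphi'(e_{i,j})=1$ for every edge $v_{i}v_{j}$ of $T$; then $H(T,\varphi')=A(T)$ and the lemma follows. By part (ii) of Lemma~\ref{L16} applied to the connected components of $T$, it suffices to handle a single tree. Root the tree at an arbitrary vertex $v_{r}$ and set $\zeta_{r}=1$. Process the remaining vertices in BFS order from the root: when a vertex $v_{j}$ is first reached along the tree edge $v_{i}v_{j}$ (with $v_{i}$ already processed), set $\zeta_{j}=\zeta_{i}\,\varphi(e_{i,j})^{-1}$, which lies in $\mathbb{T}$. A direct computation gives $\varphi'(e_{i,j})=\overline{\zeta_{i}}\varphi(e_{i,j})\zeta_{j}=|\zeta_{i}|^{2}=1$. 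Because $T$ is a tree, each non-root vertex has a unique parent edge, so there is no over-determined constraint and the construction goes through consistently for all edges.

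The only point requiring some care is to confirm that no conflicting condition is imposed on $\zeta$, which is precisely where acyclicity is used: in a graph with a cycle, closing a cycle would impose a compatibility condition on the product of gains around the cycle that cannot in general be satisfied, but in a forest no such condition arises. I do not expect any serious obstacle; the argument is entirely linear-algebraic and combinatorial, with acyclicity as the single essential hypothesis.
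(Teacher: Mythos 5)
Your proposal is correct. Note that the paper does not prove this lemma at all --- it is quoted from \cite{YGH} as a known result --- so there is no in-paper argument to compare against; your gauge-switching construction (rooting each tree component, propagating $\zeta_j=\zeta_i\varphi(e_{i,j})^{-1}$ along parent edges, and checking $\varphi'(e_{j,i})=\overline{\varphi'(e_{i,j})}=1$) is the standard and complete way to establish it, and it in fact yields the stronger conclusion that $H(T,\varphi)$ and $A(T)$ are unitarily similar, hence cospectral, which is more than the rank equality the lemma asserts. The one stylistic remark is that the same conclusion also follows from a Sachs-coefficient argument (every elementary subgraph of a forest consists solely of $K_2$-components, each contributing $\varphi(e_{i,j})\varphi(e_{j,i})=1$ to the principal minors, so the characteristic polynomials coincide), which is closer in spirit to the machinery the paper actually develops in the proofs of Theorem \ref{T30} and Lemma \ref{L540}; either route is perfectly acceptable here.
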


It is well known that $r(T)=2m(T)$ for an acyclic graph $T$ in \cite{CVE}. Then we have Lemma \ref{L15}.

\begin{lemma} \label{L15}
Let $(T, \varphi)$ be an acyclic complex unit gain graph.
Then $r(T, \varphi)= r(T)=2m(T)$.
\end{lemma}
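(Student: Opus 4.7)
The statement to be proved is essentially a direct concatenation of two facts already available, so the proof plan is very short and routine. The plan is as follows.

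First, I would invoke Lemma~\ref{L1500} directly: for any acyclic complex unit gain graph $(T,\varphi)$, the rank of the Hermitian adjacency matrix $H(T,\varphi)$ equals the rank of the ordinary adjacency matrix $A(T)$, that is, $r(T,\varphi)=r(T)$. This handles the first equality without any further work, since Lemma~\ref{L1500} is already stated as a reference.

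Second, I would quote the classical identity $r(T)=2m(T)$ for forests, attributed in the excerpt to \cite{CVE}. The standard justification (which I would simply mention rather than reproduce) is that for an acyclic graph one can pick a maximum matching $M$, order the vertices so that each matched pair $\{u,v\}\in M$ is consecutive, and observe that the rows/columns corresponding to unmatched vertices are linear combinations of those coming from $M$; equivalently, the determinant of each $2\times 2$ block coming from a matched edge is nonzero while any minor involving an unmatched vertex collapses. This yields exactly $2m(T)$ independent rows.

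Chaining the two equalities gives $r(T,\varphi)=r(T)=2m(T)$, which is the desired conclusion. There is no real obstacle here: the statement is stated in the paper precisely as the combination of Lemma~\ref{L1500} with a well-known lemma from spectral graph theory, and it is included only because it is the form in which the identity will actually be invoked in the later sections (the matching-number bounds in Theorems~\ref{T30}--\ref{T60}). Accordingly, my write-up would be only two lines long, citing Lemma~\ref{L1500} and \cite{CVE} and then composing the two equalities.
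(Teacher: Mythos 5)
Your proposal is correct and matches the paper exactly: the paper obtains Lemma~\ref{L15} by combining Lemma~\ref{L1500} with the classical identity $r(T)=2m(T)$ for acyclic graphs cited from \cite{CVE}, which is precisely your two-line argument. The extra sketch you give of why $r(T)=2m(T)$ holds for forests is unnecessary but harmless.
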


\begin{lemma} \label{L13}{\rm\cite{YGH}}
Let $y$ be a pendant vertex of a complex unit gain graph $(G, \varphi)$ and $x$ is the neighbour of $y$.
Then $r(G, \varphi)=r((G, \varphi)-  \{ x, y \} )+2$.
\end{lemma}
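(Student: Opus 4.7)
The plan is to compute $r(G,\varphi)$ directly from the matrix $H(G,\varphi)$ by reducing it to block-diagonal form via elementary row and column operations, which preserve rank. I would order the vertices as $v_1=y$, $v_2=x$, $v_3,\ldots,v_n$, set $\alpha=\varphi(e_{x,y})\in\mathbb{T}$, and for each $i\geq 3$ write $c_i=\varphi(e_{x,v_i})$ when $xv_i\in E(G)$ and $c_i=0$ otherwise. Since $y$ has $x$ as its unique neighbour, the first row and the first column of $H(G,\varphi)$ each contain only one nonzero entry (namely $\bar\alpha$ and $\alpha$), so $H(G,\varphi)$ takes the block form
$$
H(G,\varphi)=\begin{pmatrix} 0 & \bar\alpha & 0 \\ \alpha & 0 & b^T \\ 0 & \bar b & H' \end{pmatrix},
$$
where $b=(c_3,\ldots,c_n)^T$ and $H'$ is the Hermitian adjacency matrix of $(G,\varphi)-\{x,y\}$.

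The next step is to eliminate $b^T$ and $\bar b$ using the pivot entries $\alpha$ and $\bar\alpha$. For each $i\geq 3$, I would subtract $(c_i/\alpha)$ times column $1$ from column $i$; this clears the $c_i$ entry in the $x$-row without altering any other row, because column $1$ vanishes outside the $(x,y)$ position. Symmetrically, for each $i\geq 3$, subtract $(\bar c_i/\bar\alpha)$ times row $1$ from row $i$ to clear the $\bar c_i$ entry in the $x$-column. Because the first row and first column of $H(G,\varphi)$ are zero outside their $(y,x)$ and $(x,y)$ entries, these sweeps leave the bottom-right block $H'$ unchanged, and produce the block-diagonal matrix
$$
\begin{pmatrix} 0 & \bar\alpha & 0 \\ \alpha & 0 & 0 \\ 0 & 0 & H' \end{pmatrix}.
$$

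Finally, the rank of this reduced matrix equals the sum of the ranks of its two diagonal blocks. The top-left $2\times 2$ block has determinant $-|\alpha|^2=-1$ and therefore rank $2$, while the bottom-right block has rank $r((G,\varphi)-\{x,y\})$. Since elementary row and column operations preserve rank, this gives $r(G,\varphi)=r((G,\varphi)-\{x,y\})+2$. There is no real obstacle in this argument; the only point requiring a moment of care is verifying that the two sweeps do not disturb the third diagonal block, which is immediate from the sparsity of the first row and column. An equivalent packaging of the same idea is to observe that the top-left $2\times 2$ block is invertible and apply the Schur complement formula, checking that the Schur complement of that block in $H(G,\varphi)$ is exactly $H'$.
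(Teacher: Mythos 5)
Your argument is correct and complete: the sparsity of the pendant row/column makes the two elimination sweeps legitimate, the $2\times2$ pivot block has determinant $-1$, and the Schur complement is exactly $H'$, so the rank identity follows. The paper itself gives no proof of this lemma (it is quoted from the reference of Yu, Qu and Tu on inertia of complex unit gain graphs), and your row/column-reduction argument is precisely the standard proof of that cited result, so there is nothing to reconcile.
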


\begin{lemma} \label{L14}{\rm\cite{YGH}}
Let $x$ be a vertex of a complex unit gain graph $(G, \varphi)$.
Then $r(G, \varphi)-2 \leq r((G, \varphi)-x) \leq r(G, \varphi)$.
\end{lemma}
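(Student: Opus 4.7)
The plan is to view $H((G,\varphi)-x)$ as a principal submatrix of the Hermitian matrix $H(G,\varphi)$ and apply Cauchy's interlacing theorem. Indeed, if we order the vertices so that $x$ corresponds to the last row/column, then $H((G,\varphi)-x)$ is obtained from $H(G,\varphi)$ by deleting this last row and column, and both matrices are Hermitian (by the property $\varphi(e_{i,j})=\varphi(e_{j,i})^{-1}$).

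For the upper bound, any submatrix of a matrix has rank at most that of the full matrix, so $r((G,\varphi)-x)\le r(G,\varphi)$ is immediate. Alternatively, this also follows from the interlacing argument below.

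For the lower bound, let $\lambda_{1}\ge \lambda_{2}\ge \cdots \ge \lambda_{n}$ be the eigenvalues of $H(G,\varphi)$ and $\mu_{1}\ge \mu_{2}\ge \cdots \ge \mu_{n-1}$ be those of $H((G,\varphi)-x)$. By Cauchy's interlacing theorem,
$$
\lambda_{i}\;\ge\; \mu_{i}\;\ge\; \lambda_{i+1}, \qquad i=1,2,\ldots,n-1.
$$
Write $p^{+}=p^{+}(G,\varphi)$ and $n^{-}=n^{-}(G,\varphi)$, so that $\lambda_{1},\ldots,\lambda_{p^{+}}>0$ and $\lambda_{n-n^{-}+1},\ldots,\lambda_{n}<0$. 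From $\mu_{i}\ge \lambda_{i+1}$ with $i\le p^{+}-1$ we obtain $\mu_{1},\ldots,\mu_{p^{+}-1}>0$, and from $\mu_{i}\le \lambda_{i}$ with $i\ge n-n^{-}+2$ we obtain at least $n^{-}-1$ strictly negative $\mu_{i}$'s. Therefore $H((G,\varphi)-x)$ has at least $(p^{+}-1)+(n^{-}-1)=r(G,\varphi)-2$ nonzero eigenvalues, which gives $r((G,\varphi)-x)\ge r(G,\varphi)-2$.

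The only step that requires any care is invoking Cauchy interlacing in the Hermitian (rather than real symmetric) setting, but this is the standard form of the theorem and presents no real obstacle; the rest is simple counting of positive and negative eigenvalues.
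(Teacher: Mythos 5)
The paper offers no proof of this lemma --- it is quoted directly from \cite{YGH} --- and the standard proof there is exactly the Cauchy interlacing argument you give, so your approach is the expected one and is correct in substance: the upper bound is the trivial submatrix rank inequality, and the lower bound follows from counting how many of the interlaced eigenvalues are forced to stay positive or negative. The only blemish is an off-by-one in your index range for the negative eigenvalues: to obtain $n^{-}-1$ strictly negative $\mu_i$ you should apply $\mu_i\le\lambda_i<0$ for $n-n^{-}+1\le i\le n-1$, since your stated range $i\ge n-n^{-}+2$ supplies only $n^{-}-2$ of them.
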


\begin{lemma} \label{L17}{\rm\cite{LSC}}
Let $G$ be a simple undirected graph.
Then $m(G)-1 \leq m(G-v) \leq m(G)$ for any vertex $v \in V(G)$.
\end{lemma}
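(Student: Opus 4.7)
The statement to be proved is the elementary fact that $m(G)-1 \leq m(G-v) \leq m(G)$ for every vertex $v$ of a simple undirected graph $G$. My plan is a direct combinatorial argument using the definition of maximum matching, with no graph-theoretic machinery needed beyond the observation that a matching in a subgraph is a matching in the parent graph, and that each vertex is saturated by at most one edge of any matching.

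For the upper bound $m(G-v) \leq m(G)$, the plan is to note that $G-v$ is a subgraph of $G$ on the vertex set $V(G)\setminus\{v\}$, so every matching $M'$ of $G-v$ is automatically a set of pairwise independent edges of $G$, hence a matching of $G$. Applying this to a maximum matching of $G-v$ gives $|M'|=m(G-v)\leq m(G)$.

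For the lower bound $m(G-v) \geq m(G)-1$, the plan is to start with a maximum matching $M$ of $G$, so $|M|=m(G)$. Since $M$ is a matching, at most one edge of $M$ is incident with $v$. If no edge of $M$ meets $v$, then $M$ itself is a matching of $G-v$ and $m(G-v) \geq |M| = m(G) \geq m(G)-1$. Otherwise there is a unique edge $e=vu \in M$ incident with $v$, and $M\setminus\{e\}$ is a set of pairwise independent edges of $G-v$ of cardinality $|M|-1 = m(G)-1$, so $m(G-v) \geq m(G)-1$.

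There is no real obstacle here; the only thing to be careful about is the edge case where $v$ is isolated in $G$ (in which case no edge of $M$ meets $v$ and the bound is tight with $m(G-v)=m(G)$), and the possibility that $v$ is an endpoint of a matching edge (giving tightness $m(G-v)=m(G)-1$, as realized for instance by $G=K_2$ and $v$ either endpoint). Both cases are already covered by the two sub-cases above, so the proof is complete with essentially two lines of argument.
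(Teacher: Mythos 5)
Your proof is correct and is the standard argument for this fact; the paper itself gives no proof, citing it from \cite{LSC}, and your two-case treatment of the lower bound (whether or not the maximum matching of $G$ saturates $v$) together with the subgraph observation for the upper bound is exactly the canonical route.
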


\begin{lemma} \label{L18}{\rm\cite{WDY}}
Let $G$ be a graph obtained by joining a vertex of an even cycle $C$ by an edge to a vertex of a connected graph $H$. Then $m(G)=m(C)+m(H)$.
\end{lemma}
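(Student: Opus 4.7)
The plan is to prove the two inequalities $m(G)\le m(C)+m(H)$ and $m(G)\ge m(C)+m(H)$ separately, using elementary matching arguments together with the structural fact that $G$ consists of $C$ and $H$ connected by a single bridge.

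For the lower bound $m(G)\ge m(C)+m(H)$, I would construct an explicit matching of size $m(C)+m(H)$ as follows. Since $C$ is an even cycle, it has a perfect matching $M_C$ with $|M_C|=m(C)=|V(C)|/2$. Let $M_H$ be a maximum matching of $H$, so $|M_H|=m(H)$. Because the only edges joining $C$ and $H$ form the single bridge $uv$ (with $u\in V(C)$ and $v\in V(H)$), the edge sets $M_C$ and $M_H$ are disjoint and share no endpoints, hence $M_C\cup M_H$ is a matching of $G$ of size $m(C)+m(H)$.

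For the upper bound, let $M$ be any maximum matching of $G$. I would split into two cases according to whether the bridge $uv$ belongs to $M$. If $uv\notin M$, then $M$ decomposes as a disjoint union of a matching in $C$ and a matching in $H$, giving $|M|\le m(C)+m(H)$ immediately. If $uv\in M$, then $M\setminus\{uv\}$ is a matching of $G-\{u,v\}$, whose components are the path $C-u$ and the graph $H-v$; since $C$ is an even cycle of length $2k$, the path $C-u$ has matching number $k-1=m(C)-1$, and $m(H-v)\le m(H)$ by Lemma~\ref{L17}, so $|M|\le 1+(m(C)-1)+m(H)=m(C)+m(H)$.

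The only step that requires any thought is the case $uv\in M$ of the upper bound, where one needs the exact value $m(C-u)=m(C)-1$ for an even cycle; this follows from the standard fact that removing one vertex from $C_{2k}$ yields the path $P_{2k-1}$ with matching number $k-1$. Combining the two inequalities yields the claimed equality $m(G)=m(C)+m(H)$.
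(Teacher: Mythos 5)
Your proof is correct and complete: the lower bound via the disjoint union of a perfect matching of $C$ with a maximum matching of $H$, and the upper bound via the case split on whether the bridge $uv$ lies in a maximum matching (using $m(C-u)=m(C)-1$ for an even cycle and $m(H-v)\le m(H)$), together give the equality. Note that the paper itself offers no proof of this lemma --- it is imported verbatim from \cite{WDY} --- so there is nothing to compare against; your argument is the standard one and fills that gap self-containedly.
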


\begin{lemma} \label{L19}{\rm\cite{WDY}}
Let $x$ be a pendant vertex of $G$ and $y$ be the neighbour of $x$. Then $m(G)=m(G-y)+1=m(G- \{ x, y \})+1$.
\end{lemma}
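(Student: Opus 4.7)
The plan is to establish both equalities by exhibiting a maximum matching of $G$ that uses the pendant edge $xy$, and then transferring back and forth between matchings of $G$, $G-y$, and $G-\{x,y\}$.

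First I would prove the lower bound $m(G) \ge m(G-\{x,y\})+1$. This is immediate: any matching $M_0$ of $G-\{x,y\}$ extends to a matching $M_0\cup\{xy\}$ of $G$ of size $|M_0|+1$, because neither $x$ nor $y$ appears in $M_0$. The same construction gives $m(G)\ge m(G-y)+1$, since in $G-y$ the vertex $x$ is isolated, so a maximum matching of $G-y$ does not saturate $x$, and again $M_0\cup\{xy\}$ is a matching in $G$.

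The key step is the matching upper bound, which relies on the observation that \emph{some} maximum matching of $G$ contains the edge $xy$. To see this, let $M$ be a maximum matching of $G$. Since $x$ is pendant with unique neighbour $y$, either $xy\in M$, or $x$ is unsaturated by $M$. In the unsaturated case, if $y$ were also unsaturated we could adjoin $xy$ to $M$ and contradict maximality, so $y$ is saturated by some edge $yz$ with $z\ne x$; then $(M\setminus\{yz\})\cup\{xy\}$ is still a matching of $G$ of the same size containing $xy$. Replacing $M$ by this matching if necessary, I may assume $xy\in M$. Then $M\setminus\{xy\}$ is a matching of $G-\{x,y\}$, yielding $m(G-\{x,y\})\ge m(G)-1$, and it is also a matching of $G-y$ (since $y$ is deleted and $x$ becomes isolated but is simply not used), yielding $m(G-y)\ge m(G)-1$. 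Combining these two inequalities with the lower bounds of the previous paragraph gives the chain of equalities $m(G)=m(G-y)+1=m(G-\{x,y\})+1$.

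The only real obstacle is the augmenting-style swap $(M\setminus\{yz\})\cup\{xy\}$ that produces a maximum matching containing the pendant edge; once that is in hand, the rest is bookkeeping. Note this is a purely combinatorial fact about the underlying graph $G$ and does not involve the gain function $\varphi$, so no spectral machinery or results from earlier in the excerpt are needed.
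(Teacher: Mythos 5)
Your proof is correct. The paper does not prove this lemma at all---it is quoted as a known result from \cite{WDY}---so there is no internal argument to compare against; your argument (extend a matching of $G-\{x,y\}$ or $G-y$ by the pendant edge $xy$ for the lower bounds, and use the standard swap $(M\setminus\{yz\})\cup\{xy\}$ to produce a maximum matching of $G$ containing $xy$ for the upper bounds) is the standard and complete way to establish it, and you correctly note that it concerns only the underlying graph and not the gain function.
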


\begin{lemma} \label{L22}{\rm\cite{RULA}}
Let $G$ be a graph with at least one cycle. Suppose that all cycles of $G$
are pairwise-disjoint and each cycle is odd, then $m(T_{G})=m(G-O(G))$ if and only if
$m(G)=\sum_{C \in \mathscr{L}(G)}m(C)+m(G-O(G))$, where $\mathscr{L}(G)$ denotes the set of all cycles in $G$
and $O(G)$ is the set of vertices in cycles of $G$.
\end{lemma}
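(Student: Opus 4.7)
The plan is to treat the two bounds of Theorem~\ref{T60} separately. For the lower bound, the argument is immediate: for any proper subset $V_0 \subset V(G)$ with $G - V_0$ acyclic, Lemma~\ref{L16}(i) gives $r(G - V_0, \varphi) \leq r(G, \varphi)$ and Lemma~\ref{L15} evaluates $r(G - V_0, \varphi) = 2m(G - V_0)$. Maximising over admissible $V_0$ yields $2 \max_{V_0} m(G - V_0) \leq r(G, \varphi)$.

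For the upper bound $r(G, \varphi) \leq 2m(G) + b(G)$, I plan to use strong induction on $|V(G)|$; the base case $|V(G)|=1$ is trivial. A useful preliminary is the easy monotonicity $b(G - v) \leq b(G)$ for every $v \in V(G)$: starting from a minimum odd-cycle transversal $S^*$ of $G$, if $v \in S^*$ then $S^* \setminus \{v\}$ transverses $G - v$, and if $v \notin S^*$ then $S^*$ itself transverses $G - v$. Moreover, $m$, $b$ and $r(\cdot, \varphi)$ are all additive over connected components (the last by Lemma~\ref{L16}(ii)), so the induction hypothesis applies componentwise even when vertex removal disconnects the graph. In the inductive step, I split into two cases according to whether $G$ has an \emph{essential} vertex, meaning some $v$ with $m(G - v) = m(G) - 1$, or equivalently a vertex lying in every maximum matching of $G$.

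In Case~1, when an essential $v$ exists, Lemma~\ref{L14} gives $r(G, \varphi) \leq r(G - v, \varphi) + 2$, and the induction hypothesis combined with $b(G-v) \leq b(G)$ yields $r(G - v, \varphi) \leq 2(m(G) - 1) + b(G)$; combining produces $r(G, \varphi) \leq 2m(G) + b(G)$. In Case~2, when every vertex is missed by some maximum matching, connectedness of $G$ together with the Gallai--Edmonds structure theorem forces $G$ to be factor-critical, so $|V(G)|$ is odd, $m(G) = (|V(G)|-1)/2$, and since factor-critical graphs on at least three vertices must contain an odd cycle, $b(G) \geq 1$; the trivial bound $r(G, \varphi) \leq |V(G)|$ then yields $r(G, \varphi) \leq 2m(G) + 1 \leq 2m(G) + b(G)$. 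The main obstacle is Case~2, which hinges on the Gallai--Edmonds classification of connected graphs without essential vertices as factor-critical graphs; this is a classical matching-theoretic result not developed in the paper's preliminaries, so it would need to be either cited or proved in-line through a careful alternating-path analysis (the naive induction that simply deletes a vertex of an optimal odd-cycle transversal falls short by exactly one, so identifying the right vertex to delete is essential).
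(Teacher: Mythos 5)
Your proposal does not address the statement in question. The statement to be proved is Lemma~\ref{L22}: for a graph $G$ whose cycles are pairwise vertex-disjoint and all odd, the equality $m(T_{G})=m(G-O(G))$ holds if and only if $m(G)=\sum_{C \in \mathscr{L}(G)}m(C)+m(G-O(G))$. This is a purely combinatorial equivalence about matching numbers of the contracted graph $T_G$, the cycle-deleted graph $G-O(G)$, and the cycles themselves; it involves no gain function, no rank, and no adjacency matrix. What you have written instead is a proof sketch for Theorem~\ref{T60} (the bounds $2\max_{V_0} m(G-V_{0}) \leq r(G, \varphi) \leq 2m(G)+b(G)$), complete with a discussion of essential vertices, Gallai--Edmonds structure, and the parameter $b(G)$. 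None of that bears on Lemma~\ref{L22}.

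Concretely, a proof of Lemma~\ref{L22} would need to compare a maximum matching of $G$ with the union of maximum matchings of the cycles $C \in \mathscr{L}(G)$ and of $G-O(G)$, and relate both to matchings of $T_G$ via the correspondence between edges of $T_G$ not incident to cyclic vertices and edges of $G-O(G)$, using that each odd cycle $C$ satisfies $m(C)=(|V(C)|-1)/2$ so that exactly one vertex per cycle can be left exposed. (In the paper this lemma is quoted from the reference \cite{RULA} rather than proved, but in any case your argument establishes a different theorem.) As it stands, there is no overlap between what you proved and what was asked, so the proposal must be counted as missing the statement entirely.
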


\begin{lemma} \label{L24}{\rm\cite{RULA}}
Let $G$ be a connected graph without pendant vertices and $c(G) \geq 2$.
Suppose that for any vertex $u$ on a cycle of $G$, $c(G-u) \geq c(G)-2$. Then there are at most
$c(G)-1$ vertices of $G$ which are not covered by its maximum matching.
\end{lemma}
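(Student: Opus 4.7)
The plan is to rewrite the conclusion in the equivalent form $|V(G)| - 2m(G) \le c(G) - 1$ and then induct on $c(G)$. The identity $c(G - u) = c(G) - \deg_G(u) + \omega(G - u)$, which follows immediately from the definition $c(H) = |E(H)| - |V(H)| + \omega(H)$, converts the structural hypothesis into the more useable form $\omega(G - u) \ge \deg_G(u) - 2$; this inequality is the key quantitative tool in the inductive step.

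For the base case $c(G) = 2$, the graph $G$ is connected with $|E(G)| = |V(G)| + 1$ and minimum degree at least $2$, so the degree sum $2|V(G)| + 2$ combined with $\deg(v) \ge 2$ everywhere forces $G$ to be one of three structural types: a figure-eight (two cycles sharing a single vertex), a theta-graph (two vertices joined by three internally disjoint paths), or a dumbbell (two vertex-disjoint cycles joined by a path). In each family an explicit maximum matching can be exhibited and, by short direct case analysis on the parities of the cycle/path lengths involved, one checks that it covers all but at most one vertex, confirming the bound $|V(G)| - 2m(G) \le 1 = c(G) - 1$.

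For the inductive step, pick a vertex $u$ on some cycle of $G$. If $\deg_G(u) = 2$ with neighbours $v, w$, then $c(G - u) \in \{c(G) - 1, c(G)\}$; peel the pair $(u, v)$ via Lemma~\ref{L19} (so $m(G) = m(G - \{u, v\}) + 1$) and then iteratively eliminate any pendant--neighbour pairs newly created in $G - \{u,v\}$, again using Lemma~\ref{L19}, reducing to a graph with strictly smaller cyclomatic number to which the inductive hypothesis applies. If instead $\deg_G(u) \ge 3$, the hypothesis $\omega(G - u) \ge \deg_G(u) - 2 \ge 1$ forces $G - u$ to split into multiple components whose cyclomatic numbers total at least $c(G) - 2$; one applies induction componentwise and then accounts for the single extra potentially uncovered vertex $u$ itself.

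The main obstacle is the bookkeeping across the peeling procedure. Each removal of a pendant--neighbour pair shifts both $m$ and $c$, and at every step one must verify that the reduced graph still satisfies the inductive hypothesis---in particular that the condition $c(G' - u') \ge c(G') - 2$ persists for every vertex $u'$ on a cycle of the reduced graph $G'$. Maintaining the invariant ``number of uncovered vertices $\le c - 1$'' in tandem with this peeling, while carefully handling the edge cases where a component of $G - u$ becomes a single cycle or an acyclic piece (so that the hypothesis of the lemma no longer applies and a separate direct argument is needed), is where the delicate work of the argument concentrates.
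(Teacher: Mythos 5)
The paper does not prove this lemma at all: it is quoted directly from \cite{RULA}, so there is no internal proof to compare your attempt against. Judged on its own, your outline has the right general shape (induction on $c(G)$, with the figure-eight, theta and dumbbell graphs as the base case $c(G)=2$, which you handle correctly), but the inductive step contains concrete errors rather than mere unfinished bookkeeping.

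First, in the degree-two case you invoke Lemma \ref{L19} to conclude $m(G)=m(G-\{u,v\})+1$, but that lemma requires one of the two deleted vertices to be pendant, and $G$ has no pendant vertices; the identity is false in general. For example, in the dumbbell formed by triangles $a_1a_2a_3$ and $b_1b_2b_3$ joined by the edge $a_1b_1$ (which satisfies all standing hypotheses), one has $m(G)=3$ while $m(G-\{a_2,a_1\})=1$, since $a_3$ becomes isolated. Second, in the case $\deg_G(u)\ge 3$ the inequality $\omega(G-u)\ge \deg_G(u)-2$ only yields $\omega(G-u)\ge 1$ when $\deg_G(u)=3$, which is vacuous, so such a vertex need not disconnect $G$; and even when $G-u$ does split, its components may acquire pendant vertices or have cyclomatic number $0$ or $1$, so the induction hypothesis does not apply to them, and for such components the bound ``at most $c-1$ uncovered vertices'' can simply fail (an acyclic component can leave more than one vertex uncovered, yet contributes $c=0$). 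Third --- and you flag this yourself without resolving it --- nothing is done to show that the reduced graph still satisfies $c(G'-u')\ge c(G')-2$ for every vertex $u'$ on a cycle of $G'$; since deletion changes both $c(G')$ and $c(G'-u')$ in ways depending on which cycles through $u'$ survive, this is the crux of the induction, not routine bookkeeping. As written, the argument proves the base case and shows the arithmetic would close if the reductions behaved as claimed, but the reductions themselves are unjustified and in one place provably wrong.
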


%\begin{lemma} \label{L053}{\rm\cite{HLSC}}
%Let $y$ be a pendant vertex of a graph $G$ and $x$ is the neighbour of $y$.
%Then $\alpha(G)=\alpha(G-x)=\alpha(G-\{ x, y \})+1$.
%\end{lemma}

\begin{lemma} \label{L23}{\rm\cite{WDY}}
Let $G$ be a graph with $x \in V(G)$. Then

{\em(i)} $c(G)=c(G-x)$ if $x$ lies outside any cycle of $G$;

{\em(ii)} $c(G-x) \leq c(G)-1$ if $x$ lies on a cycle of $G$;

{\em(iii)} $c(G-x) \leq c(G)-2$ if $x$ is a common vertex of distinct cycles of $G$.
\end{lemma}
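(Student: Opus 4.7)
The plan is to reduce all three parts to a single identity obtained directly from the defining formula $c(G) = |E(G)| - |V(G)| + \omega(G)$. Since removing $x$ only affects the component $H$ of $G$ containing $x$, it suffices to work inside $H$. Let $d = d_G(x)$ and let $s$ be the number of components of $H - x$. A straightforward expansion of the cyclomatic formula gives
\[
c(G) - c(G-x) \;=\; c(H) - c(H-x) \;=\; d - s
\]
(the case $V(H) = \{x\}$ is trivial and falls under part~(i)). A useful structural observation is that every component of $H - x$ contains at least one neighbor of $x$, since any vertex of $H - x$ reaches $x$ in $H$ through some neighbor. Hence $N(x)$ partitions into exactly $s$ classes by ``lying in a common component of $H-x$'', and in each part the problem reduces to bounding the number of such classes.

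For part~(i), every edge $xv$ at $x$ must be a bridge of $H$, for otherwise it would lie on a cycle, which would then contain $x$, contradicting the hypothesis. Consequently distinct neighbors of $x$ lie in distinct components of $H-x$, so $s = d$ and $c(G) = c(G-x)$. For part~(ii), a cycle $C$ through $x$ uses two distinct neighbors $v_i, v_j$, which remain connected in $H - x$ by the path $C - x$; hence at least one class has size $\geq 2$, forcing $s \leq d - 1$.

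For part~(iii), I would case-split on how two distinct cycles $C_1, C_2$ through $x$ interact at $x$. If $C_1, C_2$ involve four pairwise distinct edges at $x$, then two disjoint pairs of neighbors each coalesce in $H - x$, so $s \leq d - 2$. If $C_1, C_2$ share exactly one edge $xv$ at $x$, then $v$ together with the two other ``entering'' neighbors of $C_1$ and $C_2$ lie in one component of $H - x$; so a single class has size $\geq 3$ and again $s \leq d - 2$. The main obstacle is the degenerate subcase in which $C_1, C_2$ use the very same pair of edges at $x$ but differ elsewhere: here one must appeal either to the intended reading of ``distinct cycles'' in the hypothesis or to a cycle-space argument showing that two genuinely independent cycles through $x$ must induce two independent mergings of the classes of $N(x)$. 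Once this subcase is settled, combining the case analysis with the identity $c(G) - c(G-x) = d - s$ finishes the proof.
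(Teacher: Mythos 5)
The paper gives no proof of this lemma --- it is quoted from \cite{WDY} --- so there is nothing internal to compare you against; your proposal has to be judged on its own. Your reduction to the identity $c(G)-c(G-x)=d_G(x)-s$, where $s$ is the number of components of $H-x$ and $H$ is the component of $G$ containing $x$, is correct, as is the observation that every component of $H-x$ contains a neighbour of $x$. Parts (i) and (ii) are then complete (bridges at $x$ force $s=d$; one cycle merges two neighbour-classes, so $s\le d-1$), and so are the two subcases of (iii) in which the cycles $C_1,C_2$ use at least three distinct edges at $x$.

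The subcase you flag as the main obstacle --- two distinct cycles entering $x$ through the same pair of edges $xu$, $xv$ --- is not a gap you can close, because statement (iii) is actually false there under the plain reading of ``distinct cycles''. Take $G=K_4-e$ on $\{x,u,v,w\}$ with $xw$ the deleted edge: the cycles $xuvx$ and $xuwvx$ are distinct and both contain $x$, yet $c(G)=5-4+1=2$ while $c(G-x)$ is the cyclomatic number of the triangle $uvw$, namely $1>c(G)-2$. In your notation $d=2$ and $s=1$, so $d-s=1$ no matter how many $u$--$v$ paths survive in $H-x$; and the cycle-space argument you hope for cannot exist, since $xuvx$ and $xuwvx$ are already linearly independent in the cycle space. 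The statement is correct exactly when the two cycles do not use the same two edges at $x$ (equivalently, when $x$ has at least three neighbours on $C_1\cup C_2$), which holds in particular for edge-disjoint or internally vertex-disjoint cycles; that is evidently the reading intended in \cite{WDY} and the one needed where the present paper invokes part (iii), e.g.\ in Lemma~\ref{L520}(v) to conclude from $c(G-u)=c(G)-1$ that $u$ lies on only one cycle. So your parts (i)--(ii) stand, your case analysis for (iii) is the right one, and the residual case must be excluded by a sharpened hypothesis rather than proved.
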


%\begin{lemma} \label{L054}{\rm\cite{HLSC}}
%Let $G$ be a graph. Then

%{\em(i)} $\alpha(G)-1  \leq \alpha(G-x) \leq \alpha(G) $ for any vertex $x \in V(G)$;

%{\em(ii)} $\alpha(G-e) \geq \alpha(G)$ for any edge $e \in E(G)$.
%\end{lemma}

%\begin{lemma} \label{L055}{\rm\cite{HLSC}}
%Let $T$ be a tree with at least one edge and $T_{0}$ be the subtree obtained
%from $T$ by deleting all pendant vertices of $T$.

%{\em(i)} $\alpha(T)  \leq \alpha(T_{0}) +p(T) $, where $p(T)$ is the number of pendent vertices of $T$;

%{\em(ii)} If $\alpha(T) = \alpha(T-D)+|D|$ for a subset $D$ of $V(T)$, then there is a pendant vertex $x$ such
%that $x \notin D$.
%\end{lemma}

\begin{lemma} \label{L20}{\rm\cite{WDY}}
Let $T$ be a tree with at least one edge. Then

{\em(i)} $r(T_{1})< r(T)$, where $T_{1}$ is the subtree obtained from $T$ by deleting all the pendant vertices of $T$.

{\em(ii)} If $r(T-W)= r(T)$ for a subset $W$ of $V(T)$, then there is a pendant vertex $v$ of $T$ such that $v \notin W$.
\end{lemma}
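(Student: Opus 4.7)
The plan is to reduce both statements to the identity $r(T) = 2m(T)$ for forests (Lemma~\ref{L15}) together with a matching augmentation argument.

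For part (i), I would first dispose of the base case $T = K_{2}$: here $T_{1}$ is empty, so $r(T_{1}) = 0 < 2 = r(T)$. For $|V(T)| \ge 3$, since $r(T) = 2m(T)$ and $r(T_{1}) = 2m(T_{1})$, it suffices to prove $m(T_{1}) < m(T)$. Let $M_{1}$ be a maximum matching of $T_{1}$. Viewed as a matching of $T$, every pendant of $T$ is $M_{1}$-unmatched because pendants do not lie in $V(T_{1})$. Pick any pendant $y$ of $T$ with neighbour $x$ and build an $M_{1}$-alternating path in $T$ starting with the non-matching edge $yx$. At each step, if the current endpoint is $M_{1}$-unmatched we stop (an augmenting path has been found); otherwise, traverse the matching edge at that endpoint and then exit along any other incident edge in $T$. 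Because $T$ is acyclic, the constructed sequence never revisits a vertex, and the crucial observation is that it cannot get stuck at a matched vertex $v$ with no available outgoing non-matching edge: such a $v$ would have $T$-degree $1$, hence be a pendant of $T$, contradicting the fact that $v$ is $M_{1}$-matched. So the path terminates at an unmatched vertex and is therefore $M_{1}$-augmenting, forcing $m(T) > m(T_{1})$.

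For part (ii), I would argue by contrapositive. Suppose every pendant of $T$ lies in $W$, so $V(T) \setminus V(T_{1}) \subseteq W$, meaning $T - W$ is an induced subgraph of $T_{1}$. By Lemma~\ref{L16}(i), $r(T - W) \le r(T_{1})$, and by part (i) this is strictly less than $r(T)$, contradicting the hypothesis $r(T-W)=r(T)$. Hence some pendant of $T$ avoids $W$.

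The main obstacle is part (i): naively extending $M_{1}$ to $T$ by appending pendant edges may fail to yield a strictly larger matching (for instance in $T = P_{6}$ with $M_{1} = \{v_{2}v_{3},v_{4}v_{5}\}$, both quasi-pendants are already saturated by $M_{1}$). One must instead exploit the acyclicity of $T$ to guarantee that an $M_{1}$-alternating path started at a pendant does not dead-end and therefore necessarily augments $M_{1}$.
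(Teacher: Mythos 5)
Your proof is correct. Note that the paper itself offers no proof of Lemma~\ref{L20}: it is quoted verbatim from \cite{WDY}, so there is no in-paper argument to compare against; your route --- reducing both parts to $r=2m$ for forests (Lemma~\ref{L15}) and then exhibiting an $M_{1}$-augmenting path grown from a pendant vertex --- is the standard one for this fact. The one step worth having spelled out is exactly the one you identified: the non-backtracking alternating walk cannot dead-end, because any $M_{1}$-saturated vertex lies in $V(T_{1})$ and hence has degree at least $2$ in $T$, and acyclicity guarantees the walk is a path; with that in place, part (i) holds, and part (ii) follows correctly by the contrapositive together with monotonicity of the rank (equivalently, of the matching number) under passing to induced subgraphs of a forest.
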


By Lemmas \ref{L1500} and \ref{L20}, we have the following lemma.

\begin{lemma} \label{L21}
Let $(T, \varphi)$ be a complex unit gain tree with at least one edge. Then

{\em(i)} $r(T_{1}, \varphi)< r(T, \varphi)$, where $(T_{1}, \varphi)$ is the subtree obtained from $(T, \varphi)$ by deleting all the pendant vertices of $(T, \varphi)$.

{\em(ii)} If $r((T, \varphi)-W)= r(T, \varphi)$ for a subset $W$ of $V(T)$, then there is a pendant vertex $v$ of $(T, \varphi)$ such that $v \notin W$.
\end{lemma}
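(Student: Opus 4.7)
The plan is to deduce Lemma \ref{L21} as an immediate corollary of Lemmas \ref{L1500} and \ref{L20} by passing to the underlying graph. The key observation is that every induced subgraph of a tree is a forest (hence acyclic), so Lemma \ref{L1500} applies to each complex unit gain graph that arises in the statement and identifies its rank with the rank of its underlying graph. Once this identification is in place, the claim becomes a direct consequence of the known tree version in Lemma \ref{L20}.

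For part (i), I would first observe that $(T_{1}, \varphi)$, being obtained from the tree $(T,\varphi)$ by deleting the pendant vertices, is still an acyclic complex unit gain graph. Applying Lemma \ref{L1500} to both $(T, \varphi)$ and $(T_{1}, \varphi)$ gives $r(T, \varphi) = r(T)$ and $r(T_{1}, \varphi) = r(T_{1})$. Then Lemma \ref{L20}(i), which asserts $r(T_{1}) < r(T)$ for the underlying tree, immediately yields $r(T_{1}, \varphi) < r(T, \varphi)$.

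For part (ii), I would argue analogously. Since $W \subseteq V(T)$ and $T$ is a tree, the induced subgraph $T - W$ is a forest, so $(T, \varphi) - W$ is acyclic. Lemma \ref{L1500} therefore gives $r((T, \varphi) - W) = r(T - W)$ and $r(T, \varphi) = r(T)$, so the hypothesis $r((T, \varphi) - W) = r(T, \varphi)$ translates into $r(T - W) = r(T)$. Applying Lemma \ref{L20}(ii) to the underlying tree $T$ produces a pendant vertex $v$ of $T$ with $v \notin W$; since the underlying graph of $(T, \varphi)$ is exactly $T$, this $v$ is also a pendant vertex of $(T, \varphi)$, as required.

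There is no real obstacle here: the only thing one must verify is the harmless fact that deleting vertices from an acyclic complex unit gain graph leaves it acyclic, so that Lemma \ref{L1500} may legitimately be invoked on both sides. Once that is noted, parts (i) and (ii) are literal rewrites of the corresponding statements in Lemma \ref{L20}.
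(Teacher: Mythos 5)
Your proposal is correct and matches the paper's own (implicit) argument exactly: the paper derives Lemma \ref{L21} directly from Lemmas \ref{L1500} and \ref{L20} by identifying the rank of each acyclic complex unit gain graph with the rank of its underlying forest. Your added remark that vertex deletion preserves acyclicity is the only point needing verification, and it is handled correctly.
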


\section{Proof of Theorem \ref{T30}}

\iffalse
the lower and upper bounds for the rank of a complex unit gain graph in terms of its matching number are presented, and the proof for Theorem \ref{T30} is given.
\fi

%We will apply the following lemma to prove Theorem \ref{T30}.
Lemma~\ref{le3-1} can be verified easily.

\begin{lemma}\label{le3-1}
Let $(G,\varphi)$ be a complex unit gain graph.
If $G$ does not contain any elementary spanning subgraph,
then $\det (H(G,\varphi))=0$.
\end{lemma}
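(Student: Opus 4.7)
The plan is to expand $\det(H(G,\varphi))$ via the Leibniz formula and interpret each surviving term combinatorially as an elementary spanning subgraph of $G$. Writing $H(G,\varphi)=(a_{i,j})$, we have
$$
\det(H(G,\varphi)) \;=\; \sum_{\sigma \in S_n} \operatorname{sgn}(\sigma)\prod_{i=1}^{n} a_{i,\sigma(i)}.
$$
Since $a_{i,i}=0$ (there are no loops) and $a_{i,j}$ is nonzero precisely when $v_iv_j \in E(G)$, the term indexed by $\sigma$ can be nonzero only if $\sigma$ is fixed-point free and $v_iv_{\sigma(i)} \in E(G)$ for every $i$.

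Next I would decompose each such $\sigma$ into its disjoint cycle structure and read off a spanning subgraph $G_\sigma$ of $G$. A $2$-cycle $(i\ j)$ in $\sigma$ forces $v_iv_j \in E(G)$ and corresponds to a $K_2$ component in $G_\sigma$; a cycle $(i_1\ i_2\ \cdots\ i_k)$ of length $k\ge 3$ forces the closed walk $v_{i_1}v_{i_2}\cdots v_{i_k}v_{i_1}$ to be a cycle of $G$, and so contributes a cycle component to $G_\sigma$. Every vertex appears in exactly one cycle of $\sigma$, so $G_\sigma$ is spanning and each of its components is either a $K_2$ or a cycle; that is, $G_\sigma$ is an elementary spanning subgraph of $G$.

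Finally, under the hypothesis that $G$ has no elementary spanning subgraph, no permutation $\sigma \in S_n$ can satisfy the conditions above, so every term of the Leibniz expansion vanishes and $\det(H(G,\varphi))=0$. The only mildly delicate point — and the one I would flag as the main thing to get right — is the verification that the complex gains cannot make an individual term $\prod_i a_{i,\sigma(i)}$ equal to zero when each edge is present; this is immediate because $|a_{i,j}|=|\varphi(e_{i,j})|=1$ whenever $v_iv_j \in E(G)$, so the modulus of the product is $1$ and nonzero. No further identity, cancellation, or symmetry between $\sigma$ and $\sigma^{-1}$ is needed for the one direction claimed in the lemma.
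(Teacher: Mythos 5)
Your proof is correct and follows essentially the same route the paper intends: the paper dismisses this lemma with ``can be verified easily,'' and its (omitted) verification is exactly the Leibniz expansion of $\det(H(G,\varphi))$ in which each nonvanishing term forces a fixed-point-free permutation whose cycle decomposition yields an elementary spanning subgraph. Your added remark that $|a_{i,j}|=1$ on edges, so no individual term can accidentally vanish, is a sensible precaution but not needed for this direction of the implication.
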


\iffalse
\begin{proof}
Let $H(G,\varphi)=(a_{i,j})_{n\times n}$,
where $n$ is the order of $G$ and
$a_{i,j}=\varphi(v_i,v_j)$ whenever $v_iv_j\in E(G)$
and $a_{i,j}=0$ otherwise.
Observe that
$$
\det (H(G,\varphi))
=\sum_{\pi} {\rm sgn}(\pi)\prod_{i=1}^n a_{i,\pi(i)},
$$
where the sum runs over all permutations $\pi$
of $1,2,\cdots,n$.
Let $\phi(\pi)=\prod_{i=1}^n a_{i,\pi(i)}$.
It suffices to show that $\phi(\pi)=0$
for each permutation $\pi$.

Clearly,  $\phi(\pi)=0$ whenever $v_iv_{\pi(i)}\notin E(G)$
for some $i\in \{1,2,\cdots,n\}$.
\end{proof}
\fi

\noindent
{\bf The proof of Theorem \ref{T30}.}
%\begin{proof}
Firstly, we prove that $r(G, \varphi) \leq 2m(G)+c(G)$.
Let $V(G)=\{ v_{1}, v_{2}, \cdots, v_{n} \}$ and $P_{(G, \varphi)}(\lambda)=\lambda^{n}+a_{1} \lambda^{n-1}+ \cdots + a_{n}$ be the characteristic polynomial of $H(G, \varphi)$, where $H(G, \varphi)$ is the adjacent matrix of $(G, \varphi)$.
Observe that $r(G, \varphi) \leq 2m(G)+c(G)$ if
and only if $a_k=0$ for all $k>2m(G)+c(G)$.

Note that $(-1)^{k}a_{k}$ is the sum of all $k\times k$
principal minors of $H(G, \varphi)$,
where a $k\times k$
principal minor of $H(G, \varphi)$
is the determinant of the Hermitian adjacency matrix of
$(G[S], \varphi)$ for some $S\subseteq V$ with $|S|=k$.

\iffalse
of $(G, \varphi)$ with $k$ vertices. By similar proof of Propositions 7.2 and 7.3 in \cite{BIGG}.
We consider a term ${\rm{sgn}}(\pi)h_{1,\pi(1)}h_{2,\pi(2)} \cdots h_{k,\pi(k)}$  in the expansion
of ${\rm{det}}H(G_{0}, \varphi)$. This term vanishes if, for some $j\in \{ 1, 2, \cdots, k  \}$, $h_{j,\pi(j)}=0$; that is, if $v_{j}v_{\pi(j)}$ is not an edge of $(G, \varphi)$. In particular, the term vanishes if $\pi$ fixes any symbol. Thus, if the term corresponding to a permutation $\pi$ is non-zero, then
$\pi$ can be expressed uniquely as the composition of disjoint cycles of length at least 2 (here an edge is considered to be a 2-cycle). Each cycle $(kl)$
of length 2 corresponds to the factors $h_{kl}h_{lk}$, and signifies $v_{k} \sim v_{l}$. Each cycle $(pqr \cdots t)$ of length greater than 2 corresponds to the factors $h_{pq}h_{qr} \cdots h_{tp}$, and signifies a complex
unit gain cycle $v_{p}v_{q} \cdots v_{t}v_{p}$ in $(G_{0}, \varphi)$. Consequently, each non-vanishing term in
the determinant expansion gives rise to an elementary complex unit gain subgraph $(G'_{0}, \varphi)$ of $(G_{0}, \varphi)$, with $|V(G'_{0}, \varphi)|=|V(G_{0}, \varphi)|$. That is, $(G'_{0}, \varphi)$ is a spanning elementary subgraph of $(G_{0}, \varphi)$.
\fi

Observe that for each elementary spanning subgraph $F$ of
$G[S]$, $k\le m(F)+c(F)$ holds, where the equality holds
if and only if each component of $F$ is either $K_2$
or an odd cycle.
Clearly, $m(F)+c(F)\le m(G)+c(F)$, implying that
when $|S|>2m(G)+c(G)$,
$G[S]$ does not have any each elementary spanning subgraph.
Thus, by Lemma~\ref{le3-1},
if $k>2m(G)+c(G)$, then $a_k=0$.
Thus, $r(G, \varphi) \leq 2m(G)+c(G)$.

Next,  we argue by induction on $c(G)$ to show that $2m(G)-2c(G) \leq r(G, \varphi) $.
If $c(G)=0$, then $r(G,\varphi)=2m(G)$ holds
by Lemma \ref{L15}.
%then $(G, \varphi)$ is an acyclic complex unit gain graph. By Lemma \ref{L15},  the result can be obtained in this case.
Now assume that $c(G) \geq 1$.
Let $x$ be a vertex on some cycle of $G$ and
$G'=G-x$.
Let $G_1,\cdots,G_t$ be the components of $G'$.
%Let $(G_{1}, \varphi), (G_{2}, \varphi), \cdots, (G_{t}, \varphi)$ be allconnected components of $(G', \varphi)$.
By Lemma \ref{L23}, we have
\begin{equation} \label{E1}
\sum_{i=1}^{t}c(G_{i})=c(G') \leq c(G)-1.
\end{equation}
By Lemmas \ref{L16}, \ref{L14} and \ref{L17}, we have
\begin{equation} \label{E2}
\sum\limits_{i=1}^{t}r(G_{i}, \varphi)=r(G', \varphi) \leq r(G, \varphi)
\end{equation}
and
\begin{equation} \label{E3}
\sum_{i=1}^{t}m(G_{i})=m(G')\geq m(G)-1.
\end{equation}
Since $c(G') \leq c(G)-1$, by the induction hypothesis,
for each $j \in \{ 1, 2, \cdots, t \}$, %one has that
\begin{equation} \label{E4}
r(G_{j}, \varphi) \geq 2m(G_{j}) -2c(G_{j}).
\end{equation}
Combining (\ref{E1}), (\ref{E2}),  (\ref{E3}) and (\ref{E4}), one has that
\begin{eqnarray} \label{E0}
r(G, \varphi) & \ge & \sum\limits_{i=1}^{t}r(G_{i}, \varphi)
\\ \nonumber
& \ge & \sum\limits_{i=1}^{t} [2m(G_{j}) -2c(G_{j})]
\\ \nonumber
& \geq & 2(m(G) - 1) - 2(c(G) - 1)
\\ \nonumber
&= &2m(G)-2c(G).
\end{eqnarray}
This completes the proof of Theorem \ref{T30}.
$\square$

\section{Proof of Theorem \ref{T50}.}
A complex unit gain graph $(G, \varphi)$ is said to be {\bf lower-optimal} if $r(G, \varphi)=2m(G)-2c(G)$,
or equivalently, the complex unit gain graphs attain the lower bound in Theorem \ref{T30}.
In this section, the proof for Theorem \ref{T50} is provided.
Firstly, we introduce some useful lemmas which will be used to prove the main result of this section.

The following Lemma \ref{L50} can be derived from Lemma \ref{L12} directly.

\begin{lemma} \label{L50}
The complex unit gain cycle $(C_{q}, \varphi)$ is lower-optimal if and only if $\varphi(C_{q}, \varphi)=(-1)^{\frac{q}{2}}$ and $q$ is even.
\end{lemma}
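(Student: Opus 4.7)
The plan is to reduce everything to Lemma~\ref{L12}. First I would compute the two trivial invariants of the underlying cycle, namely $c(C_q)=1$ and $m(C_q)=\lfloor q/2\rfloor$, so that the right-hand side of the lower-optimality equation becomes
\[
2m(C_q)-2c(C_q)=
\begin{cases}
q-2, & \text{if $q$ is even,}\\
q-3, & \text{if $q$ is odd.}
\end{cases}
\]
With this in hand, $(C_q,\varphi)$ is lower-optimal if and only if $r(C_q,\varphi)=q-2$ in the even case or $r(C_q,\varphi)=q-3$ in the odd case.

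Next I would compare these targets with the five possible ranks listed in Lemma~\ref{L12}. In the even case, Type~A gives rank $q-2$ (so lower-optimality holds), while Type~B gives rank $q>q-2$ (so it fails). In the odd case, Types~C and~D both give rank $q$, and Type~E gives rank $q-1$; all three strictly exceed $q-3$, so no odd cycle can be lower-optimal.

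Combining the two cases yields precisely the equivalence claimed: $(C_q,\varphi)$ is lower-optimal exactly when it is of Type~A, i.e.\ when $q$ is even and $\varphi(C_q,\varphi)=(-1)^{q/2}$. Since the argument is a direct table-lookup against Lemma~\ref{L12}, I do not anticipate any real obstacle; the only point worth checking carefully is that the defining conditions for Type~A coincide verbatim with the conditions in the statement, so that no additional translation step is needed.
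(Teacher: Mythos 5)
Your proposal is correct and is exactly the argument the paper intends: the authors simply remark that Lemma~\ref{L50} ``can be derived from Lemma~\ref{L12} directly,'' and your computation of $2m(C_q)-2c(C_q)$ ($=q-2$ for even $q$, $q-3$ for odd $q$) followed by the table-lookup against the five types spells out precisely that derivation. All five rank comparisons check out, so nothing is missing.
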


\begin{lemma} \label{L51}
Let $(G, \varphi)$ be a complex unit gain graph with connected components
$(G_{1}, \varphi), (G_{2}, \varphi),\\ \cdots, (G_{k}, \varphi)$. Then $(G, \varphi)$ is lower-optimal if and only if $(G_{i}, \varphi)$ is lower-optimal for each
$i \in \{  1, 2, \cdots, k \}$.
\end{lemma}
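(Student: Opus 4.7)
The plan is to exploit the additivity of all three quantities $r$, $m$, and $c$ across connected components, reducing the equivalence to a statement that a sum of nonnegative slacks vanishes if and only if each slack vanishes.

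First I would record the three additivity facts. For the rank, Lemma~\ref{L16}(ii) gives $r(G,\varphi)=\sum_{i=1}^{k} r(G_{i},\varphi)$ directly. For the matching number and the cyclomatic number, I would note (citing basic graph theory) that $m(G)=\sum_{i=1}^{k} m(G_{i})$ because any maximum matching is the disjoint union of maximum matchings on each component, and $c(G)=\sum_{i=1}^{k} c(G_{i})$ because $|E(G)|$, $|V(G)|$ and $\omega(G)$ are each additive over components (in the definition $c=|E|-|V|+\omega$, the number of components in each $G_i$ is exactly $1$, so the sum of $c(G_i)$ equals $|E(G)|-|V(G)|+k=c(G)$).

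Next I would assemble the chain
\begin{equation*}
r(G,\varphi)=\sum_{i=1}^{k} r(G_{i},\varphi)\;\geq\;\sum_{i=1}^{k}\bigl(2m(G_{i})-2c(G_{i})\bigr)=2m(G)-2c(G),
\end{equation*}
where the inequality uses Theorem~\ref{T30} applied to each connected $(G_{i},\varphi)$. Define the slack $\delta_{i}:=r(G_{i},\varphi)-\bigl(2m(G_{i})-2c(G_{i})\bigr)\geq 0$. Then
\begin{equation*}
r(G,\varphi)-\bigl(2m(G)-2c(G)\bigr)=\sum_{i=1}^{k}\delta_{i}.
\end{equation*}

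From this identity the equivalence is immediate: $(G,\varphi)$ is lower-optimal, meaning the left-hand side equals $0$, if and only if every term in the sum of nonnegative integers $\sum_{i=1}^{k}\delta_{i}$ is zero, i.e.\ $r(G_{i},\varphi)=2m(G_{i})-2c(G_{i})$ for every $i$, which is precisely the lower-optimality of each $(G_{i},\varphi)$. There is essentially no obstacle here; the only point that requires care is confirming the additivity of $c$ (which follows from $\omega(G_i)=1$ for each component), and this is the only ``calculation'' in the argument.
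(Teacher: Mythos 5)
Your proof is correct and follows essentially the same route as the paper's: both directions reduce to the additivity of $r$, $m$, $c$ over components together with the per-component lower bound $r(G_i,\varphi)\ge 2m(G_i)-2c(G_i)$ from Theorem~\ref{T30}. Your slack formulation merely packages the paper's separate sufficiency computation and necessity-by-contradiction into a single identity, which is a slightly cleaner presentation of the same argument.
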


\begin{proof} (Sufficiency.) Since $(G_{i}, \varphi)$ is lower-optimal for each
$i \in \{  1, 2, \cdots, k \}$, one has that
$$r(G_{i}, \varphi)= 2m(G_{i})-2c(G_{i}).$$
By Lemma \ref{L16}, we have
\begin{eqnarray*}
r(G, \varphi)&=&\sum\limits_{j=1}^{k}r(G_{j}, \varphi)\\
&=&\sum\limits_{j=1}^{k}[2m(G_{i})-2c(G_{i})]\\
&=&2m(G)-2c(G).
\end{eqnarray*}

(Necessity.) By contradiction, suppose that there is a connected component of $(G, \varphi)$, say $(G_{1}, \varphi)$, which is not lower-optimal. By Theorem \ref{T30}, one has that
$$r(G_{1}, \varphi)> 2m(G_{1})-2c(G_{1})$$
and for each $ j \in \{ 2, 3, \cdots, k \}$, we have
$$r(G_{j}, \varphi) \geq 2m(G_{j})-2c(G_{j}).$$
By Lemma \ref{L16}, one has that
$$r(G, \varphi) > 2m(G)-2c(G),$$
which is a contradiction.
\end{proof}

\begin{lemma} \label{L520}
Let $(G, \varphi)$ be a complex unit gain graph and $u$ be a vertex of $(G, \varphi)$ lying on a complex unit gain cycle. If $(G, \varphi)$ is lower-optimal, then
each of the following holds.
\\
{\em(i)} $r(G, \varphi)=r((G, \varphi)-u)$;
\\
{\em(ii)} $(G, \varphi)-u$ is lower-optimal;
\\
{\em(iii)} $c(G)=c(G-u)+1$;
\\
{\em(iv)} $m(G)=m(G-u)+1$;
\\
{\em(v)} $u$ lies on just one complex unit gain cycle of $(G, \varphi)$ and $u$ is not a quasi-pendant vertex of $(G, \varphi)$.
\end{lemma}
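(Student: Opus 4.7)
The plan is to establish a single chain of inequalities that, under the lower-optimality assumption, must collapse to equalities, and to read off (i)--(iv) immediately from this collapse. Specifically, since $u$ lies on a cycle, Lemma \ref{L23}(ii) gives $c(G-u)\le c(G)-1$, and Lemma \ref{L17} gives $m(G-u)\ge m(G)-1$. Applying Theorem \ref{T30} to $(G,\varphi)-u$ and then using Lemma \ref{L14}, one obtains
\begin{equation*}
r(G,\varphi)=2m(G)-2c(G)\le 2m(G-u)-2c(G-u)\le r((G,\varphi)-u)\le r(G,\varphi).
\end{equation*}
Thus every inequality above is tight. The outermost equality yields (i); the equality $r((G,\varphi)-u)=2m(G-u)-2c(G-u)$ is exactly (ii); and the two middle equalities, combined with Lemma \ref{L17} and Lemma \ref{L23}(ii), force $m(G-u)=m(G)-1$ and $c(G-u)=c(G)-1$, giving (iv) and (iii).

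For (v), both parts follow from combining what we just proved with earlier lemmas. If $u$ were a common vertex of two distinct cycles of $G$, then Lemma \ref{L23}(iii) would give $c(G-u)\le c(G)-2$, contradicting (iii); so $u$ lies on exactly one cycle. Suppose next that $u$ is quasi-pendant, so there is a pendant vertex $v$ adjacent to $u$ in $G$. Then in $(G,\varphi)-u$ the vertex $v$ is isolated, so by Lemma \ref{L16}(ii)--(iii) we have $r((G,\varphi)-u)=r((G,\varphi)-\{u,v\})$. On the other hand, Lemma \ref{L13} applied to the pendant $v$ and its neighbour $u$ gives $r(G,\varphi)=r((G,\varphi)-\{u,v\})+2$. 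Combining these two equalities yields $r((G,\varphi)-u)=r(G,\varphi)-2$, which contradicts (i). Hence $u$ is not quasi-pendant.

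I do not expect any serious obstacle: the whole argument is a standard ``squeeze'' using the general bound of Theorem \ref{T30} together with the local perturbation lemmas (Lemmas \ref{L14}, \ref{L17}, \ref{L23}), and the only subtle point is the quasi-pendant case of (v), where one must remember that removing $u$ creates an isolated vertex $v$ whose contribution to the rank is zero, so that Lemma \ref{L13} can be compared directly with (i). Everything else is bookkeeping.
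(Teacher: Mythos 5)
Your proof is correct and follows essentially the same route as the paper: the paper also derives (i)--(iv) by observing that lower-optimality forces equality throughout the chain of inequalities in the proof of Theorem \ref{T30} (your chain is the same one, just phrased via a direct application of the theorem to $G-u$ rather than via its components), and it proves (v) by exactly your two arguments, using Lemma \ref{L23}(iii) against (iii) and Lemma \ref{L13} against (i). The only point worth noting is that $G-u$ may be disconnected while Theorem \ref{T30} is stated for connected graphs, but the lower bound extends additively over components (Lemma \ref{L16}(ii)), so this is harmless.
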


\begin{proof} Since $(G, \varphi)$ is lower-optimal, all inequalities in (\ref{E0}) in the proof of Theorem \ref{T30} must be equalities, and so Lemma \ref{L520} (i)-(iv) are derived.

As for (v), by Lemma \ref{L520} (iii) and Lemma \ref{L23}, one has that $u$ lies on just one
complex unit gain cycle of $(G, \varphi)$.
Suppose to the contrary that $u$ is a quasi-pendant vertex which is adjacent to a vertex $v$. Then $v$ is
an isolated vertex in $(G, \varphi)-u$ and $r((G, \varphi)-u)=r((G, \varphi)- \{ u, v  \})$.
By Lemma \ref{L13}, we have
$$r((G, \varphi)-u)=r(G, \varphi)-2,$$
which is a contradiction to (i).
\end{proof}

\begin{lemma} \label{L53}
Let $(G, \varphi)$ be a connected complex unit gain graph with $m(T_{G})=m(G-O(G))$, where $O(G)$ is the set of vertices in cycles of $G$.
Then every vertex lying on a complex unit gain cycle can not be a quasi-pendant vertex of $(G, \varphi)$.
\end{lemma}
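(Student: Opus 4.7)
The plan is to argue by contradiction. Assume that some vertex $u$ lying on a cycle is a quasi-pendant vertex, witnessed by a pendant neighbor $v$ of $u$ in $G$. The immediate observations are that $v \notin O(G)$ (a pendant vertex, having degree $1$, cannot lie on any cycle) and that $u$ is the unique neighbor of $v$ in $G$. These two facts are the whole content driving the argument; everything else is bookkeeping about what $G-O(G)$ and $T_G$ look like near $u$ and $v$.

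First I would exploit $G - O(G)$. Since the only neighbor of $v$ in $G$ is $u\in O(G)$, removing $O(G)$ makes $v$ an isolated vertex in $G-O(G)$. In particular, any maximum matching $M$ of $G-O(G)$ misses $v$. Next I would lift $M$ to $T_G$. Because $M$ consists of edges between non-cyclic vertices, it survives the contraction and remains a matching in $T_G$. Let $w_C$ denote the cyclic vertex of $T_G$ obtained by contracting a cycle $C$ through $u$. The edge $uv$ in $G$ with $v\notin O(G)$ gives rise to the edge $w_C v$ in $T_G$, and neither $w_C$ (a cyclic vertex, hence not in $G-O(G)$) nor $v$ (unsaturated by $M$) is covered by $M$. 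Therefore $M\cup\{w_C v\}$ is a matching in $T_G$, yielding
\begin{equation*}
m(T_G)\ \ge\ |M|+1\ =\ m(G-O(G))+1\ >\ m(G-O(G)),
\end{equation*}
which contradicts the hypothesis $m(T_G)=m(G-O(G))$.

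The only potentially delicate point is the definition of $T_G$ when cycles may share vertices, but this does not affect the argument: regardless of how the contraction is realized, the pendant edge $uv$ becomes an edge joining $v$ to some cyclic vertex in $T_G$, and that cyclic vertex is not a vertex of $G-O(G)$ and so cannot be touched by $M$. Thus the main (and really only) obstacle, namely ensuring that the augmenting edge $w_C v$ is both present in $T_G$ and independent of $M$, is handled by the two elementary facts that $v$ is isolated in $G-O(G)$ and that $w_C$ is a cyclic vertex.
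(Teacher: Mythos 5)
Your proof is correct and follows essentially the same route as the paper: take a maximum matching $M$ of $G-O(G)$, note that the pendant neighbour $v$ of $u$ is untouched by $M$, and augment $M$ by the edge joining $v$ to the cyclic vertex of $T_G$ arising from the cycle through $u$, contradicting $m(T_G)=m(G-O(G))$. You are in fact slightly more careful than the paper, which asserts that $v$ is a pendant vertex of $G-O(G)$ when it is actually isolated there, and which writes the augmenting edge as $uv$ rather than as its image under the contraction; neither imprecision affects the argument.
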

\begin{proof}
Suppose to the contrary that there exists a quasi-pendant vertex $u$ lying on a complex unit gain cycle of
$(G, \varphi)$.
Let $v$ be the pendant vertex which is adjacent to $u$ and $M$ be a maximum matching of
$G-O(G)$. Then $v$ is a pendant vertex of $G-O(G)$ and $M \cup \{uv \}$ is also a matching of $T_{G}$.
Thus, $m(T_{G}) \geq m(G-O(G))+1$. This  contradiction proves the lemma.
\end{proof}

\begin{lemma} \label{L56}
Let $(G, \varphi)$ be a complex unit gain graph which contains a pendant vertex $u$ with its unique neighbour $v$.
Let $(G', \varphi)=(G, \varphi)- \{ u, v\} $. If $(G, \varphi)$ is lower-optimal, then $(G', \varphi)$ is also lower-optimal.
\end{lemma}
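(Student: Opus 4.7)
The plan is to combine the two known reduction identities for pendant edges (Lemma \ref{L13} for the rank and Lemma \ref{L19} for the matching number) with a cyclomatic-number computation that shows $c(G')=c(G)$. Once $c(G')=c(G)$ is in hand, the equality $r(G',\varphi)=2m(G')-2c(G')$ is forced by a one-line substitution into the lower-optimal identity for $(G,\varphi)$.

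First I would show that the quasi-pendant $v$ cannot lie on any cycle of $G$. Since $u$ is a pendant with unique neighbour $v$, the vertex $v$ is a quasi-pendant vertex of $(G,\varphi)$. If $v$ were on some cycle of $(G,\varphi)$, then Lemma \ref{L520}(v), applied to the lower-optimal graph $(G,\varphi)$, would force $v$ to be \emph{not} a quasi-pendant vertex, a contradiction. Hence $v$ lies on no cycle of $G$, and of course neither does $u$.

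Next I would verify $c(G')=c(G)$ using $c(H)=|E(H)|-|V(H)|+\omega(H)$. Removing the pendant $u$ decreases both $|E|$ and $|V|$ by one and leaves $\omega$ unchanged, so $c(G-u)=c(G)$. In $G-u$ the vertex $v$ still lies on no cycle, so every edge incident to $v$ in $G-u$ is a bridge; removing $v$ therefore decreases $|E|$ by $d:=\deg_{G-u}(v)$, decreases $|V|$ by $1$, and increases $\omega$ by $d-1$. These contributions cancel, giving $c(G')=c(G-u)=c(G)$.

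Finally, I would assemble the pieces. By Lemma \ref{L13} we have $r(G,\varphi)=r(G',\varphi)+2$, and by Lemma \ref{L19} we have $m(G)=m(G')+1$. Substituting into the lower-optimality assumption $r(G,\varphi)=2m(G)-2c(G)$ and using $c(G')=c(G)$ yields
\[
r(G',\varphi)=r(G,\varphi)-2=2m(G)-2c(G)-2=2(m(G')+1)-2c(G')-2=2m(G')-2c(G'),
\]
so $(G',\varphi)$ is lower-optimal. The only non-routine step is the first one: recognising that Lemma \ref{L520}(v) is exactly the tool that rules out $v$ lying on a cycle; the rest is bookkeeping.
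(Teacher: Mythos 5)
Your proof is correct and follows essentially the same route as the paper: the paper likewise invokes Lemma \ref{L520}(v) to rule out $v$ lying on a cycle, then combines Lemmas \ref{L13}, \ref{L19} and \ref{L23} to get $r(G',\varphi)=r(G,\varphi)-2$, $m(G')=m(G)-1$ and $c(G')=c(G)$ before substituting. The only cosmetic difference is that you compute $c(G')=c(G)$ directly from the definition, where the paper simply cites Lemma \ref{L23}(i).
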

\begin{proof} Since $(G, \varphi)$ is lower-optimal, one has that\
$$r(G, \varphi)=2m(G)-2c(G).$$
Moreover, by Lemmas \ref{L13}, \ref{L19}, \ref{L520} and \ref{L23}, we have
$$r(G', \varphi)=r(G, \varphi)-2, m(G')=m(G)-1, c(G)=c(G').$$
Then, $r(G', \varphi)=2m(G')-2c(G')$ can be obtained, and so $(G', \varphi)$ is also lower-optimal.
\end{proof}

\begin{lemma} \label{L540}
Let $(G, \varphi)$ be a connected complex unit gain unicyclic graph of order $n$ whose unique complex unit gain cycle is $(C_{l}, \varphi)$.
If $r(G, \varphi)=2m(G)-2$, then $\varphi(C_{l}, \varphi)=(-1)^{\frac{l}{2}}$ and $l$ is even.
\end{lemma}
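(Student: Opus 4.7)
The plan is to induct on the order $n$ of $(G,\varphi)$, with the base case $n=l$ handled by a direct computation using Lemma~\ref{L12}, and the inductive step achieved by peeling off a pendant edge whose far endpoint lies off the cycle.

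For the base case, $G=C_l$, so $c(G)=1$ and the hypothesis becomes $r(C_l,\varphi)=2m(C_l)-2$. Recall that $m(C_l)=l/2$ when $l$ is even and $m(C_l)=(l-1)/2$ when $l$ is odd. Checking the five types in Lemma~\ref{L12}, Type~A gives $r=l-2=2(l/2)-2$, Type~B gives $r=l=2m(C_l)$, Types~C and~D give $r=l=2m(C_l)+1$, and Type~E gives $r=l-1=2m(C_l)$. So only Type~A satisfies $r=2m-2$, which is precisely the conclusion $\varphi(C_l,\varphi)=(-1)^{l/2}$ with $l$ even.

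For the inductive step, assume $n>l$. Since $G$ is unicyclic and not equal to $C_l$, it has at least one pendant vertex; let $u$ be a pendant vertex with unique neighbour $v$. The key claim is that $v$ does not lie on the cycle $C_l$. Indeed, every vertex on the cycle is a cycle-vertex of the lower-optimal graph $(G,\varphi)$, so by Lemma~\ref{L520}(v) it cannot be a quasi-pendant vertex; if $v$ lay on $C_l$, then $v$ would be quasi-pendant (adjacent to $u$), a contradiction. Thus removing $\{u,v\}$ leaves $C_l$ intact, with the gain of each edge of $C_l$, and hence $\varphi(C_l,\varphi)$, unchanged.

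Now apply Lemma~\ref{L56} to conclude that $(G',\varphi):=(G,\varphi)-\{u,v\}$ is lower-optimal. Although $G'$ may be disconnected, Lemma~\ref{L51} says every connected component is lower-optimal, so in particular the component $(G_0,\varphi)$ containing $C_l$ is lower-optimal; it is a connected unicyclic $\mathbb{T}$-gain graph with cycle $(C_l,\varphi)$ and with $|V(G_0)|<n$. By the inductive hypothesis applied to $(G_0,\varphi)$, we get $\varphi(C_l,\varphi)=(-1)^{l/2}$ and $l$ even, which is the desired conclusion for $(G,\varphi)$. The only delicate point is verifying that the cycle-vertex $v$ cannot be a pendant's neighbour, and this is exactly where Lemma~\ref{L520}(v) is used; apart from that, the argument is a clean induction that transfers the Type~A information from a smaller unicyclic gain graph.
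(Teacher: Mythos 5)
Your proof is correct, but it takes a genuinely different route from the paper's. The paper argues directly via the characteristic polynomial: since $r(G,\varphi)=2m-2$, the coefficient $a_{2m}$ must vanish; expanding $(-1)^{2m}a_{2m}$ as a signed sum over elementary subgraphs of order $2m$ shows first that $l$ must be even (if $l$ were odd, only perfect matchings of induced subgraphs would contribute and $a_{2m}=(-1)^{m}|\mathbb{M}|\neq 0$), and then the cancellation between the matching terms and the cycle terms, combined with the counting inequality $|\mathbb{M}|\geq 2|\mathscr{U}_{2m}|$ and $|Re(\varphi(C_{l},\varphi))|\leq 1$, forces $(-1)^{\frac{l+2}{2}}Re(\varphi(C_{l},\varphi))=-1$ and hence $\varphi(C_{l},\varphi)=(-1)^{\frac{l}{2}}$. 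You instead note that the hypothesis is exactly lower-optimality (since $c(G)=1$) and run an induction on the order: the base case $G=C_{l}$ is settled by inspecting the five types in Lemma~\ref{L12}, and the inductive step peels off a pendant vertex $u$ and its neighbour $v$, using Lemma~\ref{L520}(v) to rule out $v$ lying on the cycle and Lemmas~\ref{L56} and~\ref{L51} to transfer lower-optimality to the component containing $C_{l}$. This is legitimate and non-circular, since Lemmas~\ref{L51}, \ref{L520} and \ref{L56} are all established before Lemma~\ref{L540} and do not depend on it. Your route avoids the sign and counting bookkeeping entirely by leaning on the structural machinery already available, whereas the paper's computation is self-contained and is reused almost verbatim in Lemma~\ref{L043} for the upper-optimal case; both are valid.
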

\begin{proof}
Let $V(G, \varphi)=\{ v_{1}, v_{2}, \cdots, v_{n} \}$ and $P_{(G, \varphi)}(\lambda)=|\lambda I_{n}-H(G, \varphi)|=\lambda^{n}+a_{1} \lambda^{n-1}+ \cdots + a_{n}$
be the characteristic polynomial of $H(G, \varphi)$ and $m=m(G)$.
It can be checked that the number $(-1)^{k}a_{k}$ is the sum of all
principal minors of $H(G, \varphi)$ with $k$ rows and $k$ columns. Where, each such minor is the determinant of the Hermitian-adjacency matrix of an induced subgraph $(G_{0}, \varphi)$ of $(G, \varphi)$ with $k$ vertices.
By similar method with the proof of the inequality on the right of Theorem \ref{T30},
one has that each non-vanishing term in
the determinant expansion gives rise to an elementary complex unit gain subgraph $(G'_{0}, \varphi)$ of $(G_{0}, \varphi)$ with $|V(G'_{0}, \varphi)|=|V(G_{0}, \varphi)|=k$. Thus, $(G'_{0}, \varphi)$ is a spanning elementary subgraph of $(G_{0}, \varphi)$.

The sign of a permutation $\pi$ is $(-1)^{N_{e}}$, where $N_{e}$ is the number of even cycles (i.e., cycles with even length) in $\pi$. If there are $c_{l}$ cycles of length $l$, then the equation
$\sum l c_{l}=|V(G'_{0}, \varphi)| $ shows that the number $N_{o}$ of odd cycles is congruent to $|V(G'_{0}, \varphi)|$ modulo 2. Hence,
$$|V(G'_{0}, \varphi)|-(N_{o}+N_{e}) \equiv N_{e}\, ({\rm{mod}} \,2),$$
so the sign of $\pi$ is equal to $(-1)^{N_{e}}$.

Each spanning elementary subgraph $(G'_{0}, \varphi)$ gives rise to several permutations $\pi$ for which
the corresponding term in the determinant expansion does not vanish. If $(G'_{0}, \varphi)$ contains a
complex unit gain cycle-component, then the number of
such $\pi$ arising from a given $(G'_{0}, \varphi)$ is 2, since for each complex unit gain cycle-component in $(G'_{0}, \varphi)$ there are two ways of choosing the corresponding cycle in $\pi$. Furthermore, if for some
direction of a permutation $\pi$, a complex unit gain cycle-component has value $\boldsymbol{i}$ (or $-\boldsymbol{i}$), then for the
other direction the complex unit gain cycle-component has value $-\boldsymbol{i}$ (or $\boldsymbol{i}$) and vice versa. Thus, they
offset each other in the summation. Similarly, if for some direction of a permutation $\pi$,
a complex unit gain cycle-component has value 1 (or -1), then for the other direction the complex unit gain
cycle-component has value 1 (or -1) too.
If for some direction of a permutation $\pi$,
a complex unit gain cycle-component has value $a+b\boldsymbol{i}$, then for the other direction the complex unit gain
cycle-component has value $a-b\boldsymbol{i}$ (note that $a+b\boldsymbol{i}$ is also a unit complex number and $a=Re(\varphi(C_{l}, \varphi))$).
Moreover, each complex unit gain edge-component has value 1.

Since $r(G, \varphi)=2m(G)-2$, $a_{2m}=0$. It can be checked that $l$ is even.
Assume for a contradiction that $l$ is odd. Then each elementary complex unit gain subgraph with order $2m$ contains only edges as its components and the sign of every permutation $\pi$ is $(-1)^{m}$. Then
$$a_{2m}=\sum\limits_{M \in \mathbb{M}}(-1)^{m}=(-1)^{m} |\mathbb{M}|\neq 0,$$
where $\mathbb{M}$ is the set of all maximum matchings of $(G, \varphi)$,
a contradiction. Thus, $l$ is even.

As $(G, \varphi)$ is a connected complex unit gain unicyclic graph, from the above analysis one has that some elementary complex unit gain subgraphs of $(G, \varphi)$ with order $2m$ contain the cycle $(C_{l}, \varphi)$ and $\frac{2m-l}{2}$ edge-components (each component is an even cycle in this case), and some elementary complex unit gain subgraphs with order $2m$ contain only $m$ edge-components.
Thus, we have
\begin{eqnarray*}
a_{2m}&=&\sum\limits_{(U, \varphi) \in \mathscr{U}_{2m}} (-1)^{p(U, \varphi)} \cdot 2^{1}\cdot Re(\varphi(C_{l}, \varphi))
+\sum\limits_{M \in \mathbb{M}}(-1)^{m}\\
&=&\sum\limits_{(U, \varphi) \in \mathscr{U}_{2m}} (-1)^{1+\frac{2m-l}{2}} \cdot 2^{1} \cdot Re(\varphi(C_{l}, \varphi))
+\sum\limits_{M \in \mathbb{M}}(-1)^{m}\\
&=&(-1)^{m} \{ |\mathbb{M}|+2|\mathscr{U}_{2m}|(-1)^{\frac{l+2}{2}}\cdot Re(\varphi(C_{l}, \varphi))  \},
\end{eqnarray*}
where $\mathscr{U}_{2m}$ denotes the set of all elementary subgraphs of order $2m$ which contains $(C_{l}, \varphi)$ as its connected component, $\mathbb{M}$ is the set of all maximum matchings of $(G, \varphi)$ and
$p(U, \varphi)$ is the number of components of $(U, \varphi)$, respectively.
By the fact that $a_{2m}=0$, then
$$|\mathbb{M}|+2|\mathscr{U}'_{2m}|(-1)^{\frac{l+2}{2}}\cdot Re(\varphi(C_{l}, \varphi)) =0.$$
It can be checked that $|\mathbb{M}| \geq 2|\mathscr{U}'_{2m}|$, as $2|\mathscr{U}'_{2m}|$ matchings of $(G, \varphi)$ of size $2m$ can be found by using the two matching in the cycle $(C_{l}, \varphi)$.
On the other hand $|Re(\varphi(C_{l}, \varphi))| \leq 1$. Thus, $(-1)^{\frac{l+2}{2}}\cdot Re(\varphi(C_{l}, \varphi))=-1$ and $\varphi(C_{l}, \varphi)=(-1)^{\frac{l}{2}}$.

This completes the proof.
\end{proof}

\begin{lemma} \label{L54}
Let $(G, \varphi)$ be a connected complex unit gain unicyclic graph whose unique complex unit gain cycle is $(C_{l}, \varphi)$. Then the following conditions are equivalent.

{\em(i)} $r(G, \varphi)=2m(G)-2$;

{\em(ii)} $\varphi(C_{l}, \varphi)=(-1)^{\frac{l}{2}}$ and $l$ is even, and $m(T_{G})=m(G-O(G))$, where $O(G)$ is the set of vertices in cycles of $G$.
\end{lemma}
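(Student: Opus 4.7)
I will prove the equivalence by induction on $|V(G)|$, with base case $G = C_l$, where $T_G$ consists of a single vertex and $G - O(G)$ is empty, so condition (iii) is vacuous; and Lemma~\ref{L12} tells us that $r(C_l, \varphi) = l - 2 = 2m(C_l) - 2$ precisely when $(C_l, \varphi)$ is of Type~A, i.e.\ $\varphi(C_l, \varphi) = (-1)^{l/2}$ and $l$ is even, which establishes (i) $\Leftrightarrow$ (ii) in this base case.

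For the inductive step, with $G \supsetneq C_l$, the crucial observation is that both (i) and (ii) force the neighbour of every pendant vertex of $G$ to lie outside $V(C_l)$. Under (i), this follows directly from Lemma~\ref{L520}(v), which says no vertex of the cycle can be a quasi-pendant of $(G, \varphi)$. Under (iii), if some cycle vertex $u$ had a pendant neighbour $z$, then in the tree $T_G$ one can extend any maximum matching of $(G - O(G)) - z$ by the edge joining $z$ to the contracted cyclic vertex $u^{*}$, obtaining $m(T_G) \ge m(G - O(G)) + 1$, a contradiction. Since $G$ has at least one non-cycle vertex and is connected, it contains a pendant $x$, whose unique neighbour $y$ then lies outside $V(C_l)$.

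Setting $(G', \varphi) = (G, \varphi) - \{x, y\}$, the graph $G'$ is a (possibly disconnected) unit gain graph still containing the cycle $(C_l, \varphi)$. Lemma~\ref{L13} gives $r(G', \varphi) = r(G, \varphi) - 2$; Lemma~\ref{L19} gives $m(G') = m(G) - 1$; and pendant-removal applied to $T_G$ and to $G - O(G)$ (each of which inherits $x$ as a pendant because $y \notin V(C_l)$) yields $m(T_{G'}) = m(T_G) - 1$ and $m(G' - O(G')) = m(G - O(G)) - 1$, where $T_{G'} = T_G - \{x, y\}$. Decomposing $G'$ into its unique unicyclic component $G'_1$ and the tree components $G'_2, \ldots, G'_k$, Lemmas~\ref{L16}(ii), \ref{L15} and~\ref{L51} allow me to transfer each of $r$, $m$, $m(T_\cdot)$, and $m(\cdot - O(\cdot))$ between $G'$ and $G'_1$, since the tree pieces contribute equally on both sides.

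This transfer makes the inductive step routine in both directions. For (ii) $\Rightarrow$ (i), condition (iii) passes to $G'_1$ and the cycle-gain condition is inherited automatically, so induction gives $r(G'_1, \varphi) = 2m(G'_1) - 2$; combining with $r(G'_i, \varphi) = 2m(G'_i)$ for the tree components (Lemma~\ref{L15}) and reversing via Lemma~\ref{L13} yields $r(G, \varphi) = 2m(G) - 2$. For (i) $\Rightarrow$ (ii), Lemma~\ref{L540} immediately supplies the cycle-gain condition, while Lemmas~\ref{L56} and~\ref{L51} make $(G'_1, \varphi)$ lower-optimal, so induction delivers $m(T_{G'_1}) = m(G'_1 - O(G'_1))$, which reverses to the desired $m(T_G) = m(G - O(G))$. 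The main obstacle to keep clean is simply that $G'$ may be disconnected, requiring one to separate out the acyclic components before invoking the induction hypothesis; Lemma~\ref{L15} absorbs these cleanly, and the two inductions then run in parallel.
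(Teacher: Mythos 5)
Your proposal is correct and follows essentially the same route as the paper: base case $G=C_l$ via Lemma~\ref{L12}, then deletion of a pendant vertex together with its neighbour (shown to lie off the cycle via Lemma~\ref{L520}(v) in one direction and via the matching argument of Lemma~\ref{L53} in the other), with Lemmas~\ref{L13}, \ref{L19}, \ref{L15}, \ref{L16}, \ref{L51}, \ref{L56} and \ref{L540} doing the same work they do in the paper. The only cosmetic difference is that you induct on $|V(G)|$ rather than on $|V(T_G)|$, which changes nothing.
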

\begin{proof}
{\bf(ii) $\Rightarrow$ (i).}  We argue by induction on the order of $T_{G}$.
If $|V(T_{G})|=1$, then $(G, \varphi) \cong (C_{l}, \varphi)$.
Since $\varphi(C_{l}, \varphi)=(-1)^{\frac{l}{2}}$ and $l$ is even, by Lemma \ref{L12}, we have $r(G, \varphi)=2m(G)-2$.

Next, one can suppose that $|V(T_{G})| \geq 2$, then there is a pendant vertex $u$ of $T_{G}$ which is also a pendant vertex of $(G, \varphi)$. Let $v$ be the unique neighbour of $u$.
By Lemma \ref{L53}, $v$ is not on any cycle of $(G, \varphi)$. Let $(G_{0}, \varphi)=(G, \varphi)- \{ u, v \}$
and $(G_{1}, \varphi), (G_{2}, \varphi), \cdots, (G_{k}, \varphi)$ be all the connected components of $(G_{0}, \varphi)$.
It is routine to check that
$$T_{G_{0}}=T_{G}-\{ u, v \}, G_{0}-O(G_{0})=G-O(G)-\{ u, v \}$$
and
$$m(T_{G_{0}})=m(G_{0}-O(G_{0})).$$
By Lemmas \ref{L13} and \ref{L19}, one has that
$$r(G, \varphi)=r(G_{0}, \varphi)+2, m(G)=m(G_{0})+1.$$
Without loss of generality, assume that $(G_{1}, \varphi)$ is the connected component which contains the
unique cycle $(C_{l}, \varphi)$. Then $(G_{j}, \varphi)$ is a tree for each $j \in \{ 2, 3, \cdots, k \}$, and
 $m(T_{G_{0}})=m(G_{0}-O(G_{0}))$ implies $m(T_{G_{j}})=m(G_{j}-O(G_{j}))$.
Since $(G_{1}, \varphi)$ is a connected unicyclic graph and $|V(T_{G_{1}})| < |V(T_{G})|$,
by induction hypothesis, one has that
$$r(G_{1}, \varphi)=2m(G_{1})-2.$$
By Lemma \ref{L15}, we have
$r(G_{j}, \varphi)=2m(G_{j})$ for each $j \in \{ 2, 3, \cdots, k\}$.
Then, by Lemma \ref{L16}, we have
\begin{eqnarray*}
r(G, \varphi)&=&r(G_{0}, \varphi)+2\\
&=&r(G_{1}, \varphi)+\sum_{j=2}^{k}r(G_{j}, \varphi)+2\\
&=&2m(G_{1})-2+2\sum_{j=2}^{k}m(G_{j})+2\\
&=&2m(G_{0})\\
&=&2m(G)-2.
\end{eqnarray*}

{\bf(i) $\Rightarrow$ (ii).} By Lemma \ref{L540} and the condition $r(G, \varphi)=2m-2$, one has that $\varphi(C_{l}, \varphi)=(-1)^{\frac{l}{2}}$ and $l$ is even.

We show $m(T_{G})=m(G-O(G))$ by induction on $|V(T_{G})|$.
Since $(G, \varphi)$ contains a cycle, $|V(T_{G})| > 0$.  If $|V(T_{G})|=1$, then $(G, \varphi) \cong C_{l}$ and $m(T_{G})=m(G-O(G))=0$.
Next, one can assume that $|V(T_{G})|\geq 2$. Then there exists a pendant vertex
$u$ of $T_{G}$ which is also a pendant vertex of $(G, \varphi)$. Let $v$ be the unique neighbour of $u$.
By Lemma \ref{L520} (v), we have $v$ is not on any cycle of $(G, \varphi)$.
Denote $(G', \varphi)=(G, \varphi)- \{ u, v \}$. Let $(G'_{1}, \varphi), (G'_{2}, \varphi), \cdots, (G'_{k}, \varphi)$ be all connected components of $(G', \varphi)$. Without loss of generality, assume that $(G'_{1}, \varphi)$ contains the unique cycle $(C_{l}, \varphi)$.
By Lemmas \ref{L13} and \ref{L19}, one has that
$$r(G', \varphi)=2m(G')-2.$$
Since $|V(T_{G'})| < |V(T_{G})|$, by the induction hypothesis, one has that
$$m(T_{G'_{1}})=m(G'_{1}-O(G'_{1})).$$
It is routine to check that $c(G)=c(G')=1$, and for each $j \in \{ 2, 3, \cdots, k \}$,
$$T_{G'_{j}} \cong G'_{j}-O(G'_{j}).$$
Thus, by Lemma \ref{L19}, we have
$$m(T_{G})=m(T_{G'})+1=m(G'-O(G'))+1=m(G-O(G)).$$

The result follows.
\end{proof}

\begin{lemma} \label{L55}
Let $(G, \varphi)$ be a complex unit gain graph obtained by joining a vertex $x$ of a complex unit gain cycle, say $(O, \varphi)$, by an edge to a vertex $y$ of a connected complex unit gain graph $(K, \varphi)$.
If $(G, \varphi)$ is lower-optimal, then the following properties hold for $(G, \varphi)$.

{\em(i)} Every complex unit gain cycle $(C_{l}, \varphi)$ of $(G, \varphi)$ satisfies $\varphi(C_{l}, \varphi)=(-1)^{\frac{l}{2}}$ and $l$ is even.

{\em(ii)} The edge $xy$ does not belong to any maximum matching of $(G, \varphi)$.

{\em(iii)} Each maximum matching of $K$ saturates $y$.

{\em(iv)} $m(K+x)=m(K)$.

{\em(v)} $(K, \varphi)$ is lower-optimal.

{\em(vi)} Let $(G', \varphi)$ be the induced complex unit gain subgraph of $(G, \varphi)$ with vertex set $V(K)\cup \{ x\}$. Then $(G', \varphi)$ is also lower-optimal.
\end{lemma}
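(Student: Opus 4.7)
The plan is to prove the six items in the order (v), (i), (iv), (iii), (ii), (vi), using Lemma~\ref{L520} applied twice: once at $x$, and once at a cycle-neighbor $v_2$ of $x$ on $O$.

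First I would derive (v) by applying Lemma~\ref{L520}(i)--(ii) at $x$: the two components of $(G,\varphi)-x$ are the tree $(O-x,\varphi)$ and $(K,\varphi)$, and Lemma~\ref{L51} forces both to be lower-optimal. For (i), given any cycle $C$ of $G$, I would iteratively delete one vertex from each other cycle; Lemma~\ref{L520}(v) guarantees these vertices lie on pairwise distinct cycles (hence outside $V(C)$), Lemma~\ref{L520}(ii)--(iii) propagate lower-optimality and decrement the cyclomatic number by exactly one at each step, so after $c(G)-1$ deletions we reach a lower-optimal graph whose only cycle is $C$. Passing to the unicyclic component containing $C$ via Lemma~\ref{L51} and applying Lemma~\ref{L54} yields $\varphi(C)=(-1)^{|C|/2}$ with $|C|$ even. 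In particular $q:=|O|$ is even, and by Lemma~\ref{L18}, $m(G)=m(O)+m(K)=q/2+m(K)$.

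Next, labelling $O=v_1v_2\cdots v_qv_1$ with $x=v_1$, I would apply Lemma~\ref{L520}(i),(iv) to $v_2$. The graph $G-v_2$ is exactly $(G',\varphi)$ with a pendant path $Q:=v_3v_4\cdots v_q$ (of $q-2$, an even number of, vertices) attached to $x$ via the edge $xv_q$. A two-case analysis (whether the matching uses $xv_q$) gives $m(G-v_2)=m(G')+(q-2)/2$; combined with $m(G-v_2)=m(G)-1=q/2+m(K)-1$, this yields $m(G')=m(K)$, which is (iv). By inspection, $m(G')=\max\{m(K),\,1+m(K-y)\}$, so (iv) forces $m(K-y)\le m(K)-1$, and Lemma~\ref{L17} gives equality $m(K-y)=m(K)-1$; this is equivalent to (iii). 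For (ii), if some maximum matching $M$ of $G$ contained $xy$, then $M\setminus\{xy\}$ would be a matching of $(O-x)\cup(K-y)$ of size $m(G)-1$; but $m(O-x)+m(K-y)=(q-2)/2+(m(K)-1)=m(G)-2$, so $m(G)-1\le m(G)-2$, a contradiction.

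Finally, for (vi), iterating Lemma~\ref{L13} along the path $Q$ of $(G-v_2,\varphi)$ strips off $(q-2)/2$ successive pendant-neighbor pairs and terminates at $(G',\varphi)$, giving $r(G-v_2,\varphi)=r(G',\varphi)+(q-2)$. Since $r(G-v_2,\varphi)=r(G,\varphi)$ by Lemma~\ref{L520}(i), and $r(G,\varphi)=2m(G)-2c(G)$ with $c(G)=c(K)+1$ and $m(G)=q/2+m(K)$, direct substitution gives $r(G',\varphi)=2m(K)-2c(K)=2m(G')-2c(G')$ (using $m(G')=m(K)$ from (iv) and $c(G')=c(K)$, as attaching the pendant $x$ creates no new cycle). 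Hence $(G',\varphi)$ is lower-optimal. The main obstacle is the bookkeeping around $G-v_2$: the pendant-stripping along $Q$ terminates exactly at $G'$ precisely because $|Q|=q-2$ is even, which depends on (i); so the logical ordering of the items is essential.
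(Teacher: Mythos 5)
Your proposal is correct and follows essentially the same route as the paper: lower-optimality is propagated with Lemma~\ref{L520} and Lemma~\ref{L51}, part (i) is reduced to the unicyclic case and settled by Lemma~\ref{L54}, and part (vi) is obtained by deleting the cycle-neighbour $v_2$ of $x$ and stripping the resulting pendant path. The only structural difference is that you establish (iv) first (from $m(G-v_2)=m(G)-1$ and a direct computation of $m(G-v_2)$) and then deduce (iii) and (ii) from it, whereas the paper proves (ii) directly from Lemma~\ref{L520}(iv) via an unsaturated vertex of the even cycle $O$ and then derives (iii) and (iv) in that order; both dependency chains are valid.
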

\begin{proof}
{\bf  (i).}  We argue by induction on $c(G)$. Since $(G, \varphi)$ contains cycle, $c(G) \geq 1$.   If $c(G)=1$, then $(G, \varphi)$ is a complex unit gain unicyclic graph. The result follows from Lemma \ref{L54} immediately. Next, one can suppose that $c(G) \geq 2$. Then $(K, \varphi)$
contains at least one cycle. Let $u$ be a vertex lying on some cycle of
$(K, \varphi)$ and $(G_{0}, \varphi)=(G, \varphi)-u$. By Lemmas \ref{L520} and \ref{L23}, we have $(G_{0}, \varphi)$ is lower-optimal and  $c(G_{0}) < c(G)$.
By induction hypothesis, one has that each cycle in $G_{0}$, including $(O, \varphi)$, satisfies (i). By a similar discussion as for $(G, \varphi)-x$, we can show that all the cycles in $(K, \varphi)$ satisfy (i). This completes the proof of (i).

{\bf  (ii).} Suppose to the contrary that there is a maximum matching $M$ of $(G, \varphi)$ containing $xy$.
By (i), one has that $(O, \varphi)$ is an even cycle. Then there exists a vertex $w \in V(O)$ such that $w$ is not saturated by $M$. Then we have $m(G)=m(G-w)$, a contradiction to Lemma \ref{L520} (iv).

{\bf  (iii).} By Lemma \ref{L18}, we have $m(G)=m(K)+m(O)$. Let $M_{1}$ and $M_{2}$ be the maximum matchings of $O$ and $K$, respectively. Then $M_{1} \cup M_{2}$ is a maximum matching of $(G, \varphi)$.
Suppose to the contrary that there exists a maximum matching of $K$ fails to saturate $y$.
Then we obtain a maximum matching $M'_{1} \cup M_{2}$ of
$(G, \varphi)$ which contains $xy$, where $M'_{1}$ is obtained from $M_{1}$ by replacing the edge in $M_{1}$ which saturates $x$ with $xy$, a contradiction to (ii).

{\bf  (iv).} Since each maximum matching of $K$ saturates $y$, it is routine to check that $m(K+x)=m(K)$.

{\bf  (v).} By Lemma \ref{L520} (ii), $(G, \varphi)-x$ is lower-optimal. Then (v) immediately follows from Lemma \ref{L51}.

{\bf  (vi).} Suppose that $O=xx_{2}x_{3} \cdots x_{2s}x$. Since $(G, \varphi)$ is lower-optimal, by Lemma \ref{L520} (ii), one has that $(G_{1}, \varphi)= (G, \varphi)-x_{2}$ is also lower-optimal.
Obviously, $x_{3}$ and $x_{4}$ are pendant vertex and quasi-pendant vertex of $(G_{1}, \varphi)$, respectively.
By Lemma \ref{L56}, one has that $(G_{2}, \varphi)= (G_{1}, \varphi)- \{  x_{3}, x_{4} \}$  is also lower-optimal.
Repeating such process (deleting a pendant vertex and a quasi-pendant vertex), after $s-1$ steps, the result graph is $(G, \varphi)- \{x_{2}, x_{3}, \cdots, x_{2s}  \}=(G', \varphi)$. By Lemma \ref{L56}, $(G', \varphi)$ is also lower-optimal.
\end{proof}

\begin{lemma} \label{L57}
Let $(G, \varphi)$ be a connected complex unit gain graph. If $(G, \varphi)$ is lower-optimal, then there exists a maximum matching $M$ of $(G, \varphi)$ such that $M \cap \mathscr{F}(G) = \emptyset$, where $\mathscr{F}(G)$ denotes the set of edges of $(G, \varphi)$, each of which has one endpoint in a cycle and the other endpoint outside the cycle.
\end{lemma}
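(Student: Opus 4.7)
The plan is to induct on $|V(G)|$, exploiting the fact that Lemma \ref{L520}(v) forces the cycles of a lower-optimal graph to be pairwise vertex-disjoint, so that the contraction $T_G$ is a genuine tree. If $G$ is acyclic or consists of a single cycle, then $\mathscr{F}(G) = \emptyset$ and any maximum matching works vacuously; otherwise $T_G$ has at least two vertices and hence at least one leaf $w$, and the argument splits according to whether $w$ is a non-cyclic vertex of $T_G$ (equivalently, a pendant vertex of $G$) or a contracted cycle.

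In the pendant-leaf case, $w$ has a unique neighbour $v$, and Lemma \ref{L520}(v) forbids $v$ from lying on a cycle (else $v$ would be a quasi-pendant vertex on a cycle). Set $G' = G - \{w,v\}$; Lemma \ref{L56} together with Lemma \ref{L51} makes each component of $(G',\varphi)$ lower-optimal, so the inductive hypothesis yields a maximum matching $M'$ of $G'$ with $M' \cap \mathscr{F}(G') = \emptyset$. Taking $M = M' \cup \{wv\}$ gives $|M| = m(G)$ via Lemma \ref{L19}, and since the cycle/non-cycle partition of $V(G')$ agrees with that of $V(G)$ restricted to $V(G')$, one has $\mathscr{F}(G) \cap E(G') = \mathscr{F}(G')$; combined with $wv \notin \mathscr{F}(G)$, this shows $M \cap \mathscr{F}(G) = \emptyset$.

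In the cyclic-leaf case, $w = [O]$ for some cycle $O$, and $G$ is the union of the cycle $O$, the connected subgraph $K := G - V(O)$, and a single bridge edge $xy$ with $x \in V(O)$ and $y \in V(K)$; the connectedness of $K$ follows from the fact that removing the leaf $[O]$ from the tree $T_G$ leaves a tree. Now apply Lemma \ref{L55}: part (i) gives that $O$ is even, part (ii) guarantees $xy$ belongs to no maximum matching of $G$, and part (v) makes $(K,\varphi)$ lower-optimal. The inductive hypothesis on the smaller $K$ produces a maximum matching $M_K$ of $K$ avoiding $\mathscr{F}(K)$; pairing it with a perfect matching $M_O$ of the even cycle $O$ and using Lemma \ref{L18} gives $|M_K \cup M_O| = m(K) + m(O) = m(G)$. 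Since edges of $O$ lie wholly inside the cycle and $\mathscr{F}(G) \cap E(K) = \mathscr{F}(K)$ (with $xy \notin E(K)$), one obtains $(M_K \cup M_O) \cap \mathscr{F}(G) = \emptyset$.

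The main obstacle is conceptual rather than technical: one has to notice that the vertex-disjointness of cycles (Lemma \ref{L520}(v)) makes $T_G$ a tree, so that the leaf-based dichotomy is actually available, and then arrange to apply Lemma \ref{L55} exactly at a cyclic leaf so that its single-bridge hypothesis is genuinely satisfied. Once this dichotomy is set up, the bookkeeping of how $\mathscr{F}$ behaves under pendant-edge deletion or under detaching a pendant cycle is routine, since both reductions preserve which vertices lie on cycles.
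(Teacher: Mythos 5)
Your proposal is correct and follows essentially the same route as the paper: induction driven by a leaf of $T_G$, with the pendant-vertex case handled via Lemmas \ref{L56}, \ref{L51} and \ref{L19}, and the pendant-cycle case handled via Lemmas \ref{L55} and \ref{L18}. The only difference is cosmetic (induction on $|V(G)|$ rather than $|V(T_G)|$), and you in fact supply a detail the paper leaves implicit, namely why a cyclic leaf of $T_G$ attaches to the rest of $G$ by a single bridge so that Lemma \ref{L55} applies.
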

\begin{proof}
We argue by induction on $|V(T_{G})|$. If $|V(T_{G})|=1$, then $(G, \varphi)$
is either a complex unit gain cycle or an isolated vertex and the conclusion holds trivially.
Then one can suppose that
$|V(T_{G})| \geq 2$, and so $T_{G}$ has at least one pendant vertex, say $u$.

Suppose $u$ is also a pendant vertex of $(G, \varphi)$.
Let $v$ be the unique neighbour of $u$ in $(G, \varphi)$ and $(G_{0}, \varphi)=(G, \varphi)- \{ u, v \}$.
By Lemmas \ref{L520} and \ref{L56}, one has that $v$ is not on any cycle of $(G, \varphi)$ and $(G_{0}, \varphi)$ is also lower-optimal. Let $(G_{1}, \varphi), (G_{2}, \varphi), \cdots, (G_{k}, \varphi)$ be all connected components of $(G_{0}, \varphi)$. Then it follows from
Lemma \ref{L51} that $(G_{j}, \varphi)$ is lower-optimal for each $j \in \{ 1,2 \cdots, k \}$. Applying induction hypothesis to
$(G_{j}, \varphi)$ yields that there exists a maximum matching $M_{j}$ of $G_{j}$ such that
$M_{j} \cap \mathscr{F}(G)= \emptyset $ for each $j \in \{ 1 ,2 \cdots, k \}$. Let $M=(\cup_{j=1}^{k}M_{j}) \cup \{uv\}$. Then it can be checked that
$M$ is a maximum matching of $(G, \varphi)$ which satisfies $M \cap \mathscr{F}(G) = \emptyset$.

If $u$ lies on some complex unit gain cycle of $(G, \varphi)$, then $(G, \varphi)$ has a pendant cycle, say $(C', \varphi)$. Let $(K, \varphi)=(G, \varphi)-(C', \varphi)$. By Lemma \ref{L55}, each cycle of $(G, \varphi)$ is even and $(K, \varphi)$ is lower-optimal.
Applying the induction hypothesis to $(K, \varphi)$ implies that there exists a maximum matching $M_{0}$
of $(K, \varphi)$ such that $M_{0} \cap \mathscr{F}(K)  = \emptyset$. Let $M'_{0}$ be a maximum matching of $(C', \varphi)$.
By Lemma \ref{L18}, it is routine to verify that $M =M _{0} \cup M'_{0}$
is a maximum matching of $(G, \varphi)$ satisfying $M \cap \mathscr{F}(G)  = \emptyset$.
This completes the proof.
\end{proof}

Now, we give the proof of the main result of this section.

\noindent
{\bf The proof of Theorem \ref{T50}.}
%\begin{proof}
(Sufficiency.) We show $r(G, \varphi)=2m(G)-2c(G)$ by induction on $|V(T_{G})|$. If $|V(T_{G})|=1$, then $(G, \varphi)$ is either a complex unit gain cycle or an isolated vertex. By Lemma \ref{L12}, the result holds in this case.

Therefore one can assume that
$|V(T_{G})| \geq 2$.  Since $m(T_{G})=m(G-O(G))$, by Lemma \ref{L15}, one has
that $r(T_{G})=r(G-O(G))$.
By Lemma \ref{L21} and $T_{G}$ is an acyclic graph, one has that
$(G, \varphi)$ has at least one pendant vertex, say $u$.
Let $v$ be the unique neighbour of $u$ in $(G, \varphi)$. By Lemma \ref{L53}, $v$ does not lie on any cycle of $(G, \varphi)$.
Let $(G_{0}, \varphi)=(G, \varphi)- \{ u, v \}$ and $(G_{1}, \varphi), (G_{2}, \varphi), \cdots, (G_{k}, \varphi)$ be all connected components of $(G_{0}, \varphi)$.
By Lemma \ref{L19}, we have $m(G)=m(G_{0})+1$.
It is routine to check that $v$ is also a vertex of $T_{G}$ (resp. $G-O(G)$)
which is adjacent to $u$ and $T_{G_{0}} = T_{G}- \{ u, v \}$ (resp. $G_{0}-O(G_{0}) = G-O(G)-\{u, v\}$).
Hence, we have
$$m(T_{G})=\sum_{j=1}^{k}m(T_{G_{j}})+1, m(G-O(G))=\sum_{j=1}^{k}m(G_{j}-O(G_{j}))+1.$$

Note that $m(T_{G_{j}}) \geq m(G_{j}-O(G_{j}))$ for each $j \in \{ 1, 2, \cdots, k \}$.
If there exists some $j \in \{ 1, 2, \cdots, k \}$ such that $m(T_{G_{j}}) > m(G_{j}-O(G_{j}))$,
then we have $m(T_{G})  > m(G-O(G))$, a contradiction to (iii). Thus, one has that
$m(T_{G_{j}})=m(G_{j}-O(G_{j}))$ for each $j \in \{ 1, 2, \cdots, k \}$.
Therefore, $(G_{j}, \varphi)$ satisfies (i)-(iii) for each $j \in \{ 1, 2, \cdots, k \}$. Applying the induction hypothesis to $(G_{j}, \varphi)$ yields that for each $j \in \{ 1, 2, \cdots, k \}$,
$$r(G_{j}, \varphi)=2m(G_{j})-2c(G_{j}).$$

Then, one has $r(G, \varphi)=2m(G)-2c(G)$ by the fact that
$$c(G)=c(G_{0})=\sum_{j=1}^{k}c(G_{j}), m(G_{0})=\sum_{j=1}^{k}m(G_{j})$$
and
$$m(G)=m(G_{0})+1=\sum_{j=1}^{k}m(G_{j})+1.$$

(Necessity.) Let $(G, \varphi)$ be a complex unit gain graph satisfying $r(G, \varphi)=2m(G)-2c(G)$. If $(G, \varphi)$ is a complex unit gain acyclic graph, then
(i)-(iii) hold trivially. So one can suppose that $(G, \varphi)$ contains cycles. By Lemma \ref{L520} (v),
(i) follows immediately.

Next, we show (ii) and (iii) by induction on the order $n$ of $(G, \varphi)$.
Since $(G, \varphi)$ contains cycles, $n\geq 3$.
If $n=3$, then $(G, \varphi)$ is a complex unit gain 3-cycle. Moreover, (ii) holds by Lemma \ref{L12} and
(iii) holds by the fact that $m(T_{G})=m(G-O(G))=0$. Suppose that (ii) and (iii) hold for any lower-optimal complex unit gain graph of
order smaller than $n$, and suppose $(G, \varphi)$ is a lower-optimal complex unit gain graph with order $n \geq 4$. If
$|V(T_{G})|=1$, then $(G, \varphi)$ is a complex unit gain cycle. Thus (ii) follows from Lemma \ref{L12} and (iii) follows from the fact that $m(T_{G})=m(G-O(G))=0$.
So, one can suppose that $|V(T_{G})| \geq 2$, then $T_{G}$ has at least one pendant vertex, say $u$. Therefore, it suffices to consider the following two possible cases.

\noindent
{\bf  Case 1.} $u$ is a pendant vertex of $(G, \varphi)$.

Let $v$ be the adjacent vertex of $u$ and $(G', \varphi)=(G, \varphi)-\{ u, v \}$. By Lemmas \ref{L520} and \ref{L56}, $v$ does not lie on any cycle of $(G, \varphi)$ and $(G', \varphi)$ is also lower-optimal. Then it follows from Lemma \ref{L51} that every connected component of $(G', \varphi)$ is lower-optimal. Let $(G'_{1}, \varphi), (G'_{2}, \varphi), \cdots, (G'_{k}, \varphi)$ be all connected components of $(G', \varphi)$. Applying the induction hypothesis to $(G'_{i}, \varphi)$ for each $i \in \{ 1, 2, \cdots, k  \}$ yields:

(a) each cycle $(C_{q}, \varphi)$ of $(G'_{i}, \varphi)$ satisfies $\varphi(C_{q}, \varphi)=(-1)^{\frac{q}{2}}$ and $q$ is even;

(b) $m(T_{G'_{i}})=m(G'_{i}-O(G'_{i}))$.

Assertion (a) implies that each cycle (if any) $(C_{q}, \varphi)$ of $(G, \varphi)$ satisfies $\varphi(C_{q}, \varphi)=(-1)^{\frac{q}{2}}$ and $q$ is even since all cycles of $(G, \varphi)$ belong to $(G', \varphi)$ in this case. Hence, (ii) holds in this case.
Note that $u$ is also a pendant vertex of $T_{G}$ (resp., $G-O(G)$) which is adjacent to $v$ and $T_{G'}=T_{G}-\{ u, v \}$ (resp., $G'-O(G')=G-O(G)-\{ u, v \}$).
By Lemma \ref{L19} and (b), one has that
\begin{eqnarray*}
m(T_{G})&=&m(T_{G'})+1\\
&=&\sum_{j=1}^{k}m(T_{G'_{i}})+1\\
&=&\sum_{j=1}^{k}m(G'_{i}-O(G'_{i}))+1\\
&=&m(G'-O(G'))+1\\
&=&m(G-O(G)).
\end{eqnarray*}

Thus (iii) holds in this case.

\noindent
{\bf  Case 2.} $u$ lies on some pendant complex unit gain cycle of $(G, \varphi)$.

Then, $(G, \varphi)$ contains at least one pendant complex unit gain cycle. By Lemma \ref{L55} (i),
the result (ii) follows immediately.

Next, we just need to prove that $m(T_{G})=m(G-O(G))$.
Let $(C'_{1}, \varphi), (C'_{2}, \varphi), \cdots, (C'_{k}, \varphi)$ be all cycles of $(G, \varphi)$. Without loss of generality,
one can assume that $u$ is the unique vertex of the pendant cycle $(C'_{1}, \varphi)$ with degree 3.
Let $(G_{1}, \varphi)=(G, \varphi)-(C'_{1}, \varphi)$ and $(G_{2}, \varphi)=(G_{1}, \varphi)+x$.
By Lemma \ref{L55} (vi), one has that $(G_{2}, \varphi)$ is lower-optimal.
Since $|V(G_{2}, \varphi)| < |V(G, \varphi)|$, by induction hypothesis to $(G_{2}, \varphi)$, one has that
$$m(T_{G_{2}})=m(G_{2}-O(G_{2})).$$
Hence, by Lemma \ref{L57}, $(G_{2}, \varphi)$ has a maximum matching $M_{2}$ such that $M_{2} \cap \mathscr{F}(G_{2})= \emptyset $, from which it follows that
$$m(G_{2})=m(G_{2}-O(G_{2}))+\frac{\sum_{j=2}^{k}|V(C'_{j})|}{2}.$$
By Lemma \ref{L55} (iv), we have
$$m(G_{1})=m(G_{2}).$$
Moreover, by Lemma \ref{L18}, one has that
$$m(G)=m(C'_{1})+m(G_{1}).$$
By Lemma \ref{L57}, there exists a maximum matching $M$ of $(G, \varphi)$ such
that $M \cap \mathscr{F}(G) = \emptyset $. Consequently,
$$m(G)=m(G-O(G))+\frac{\sum_{j=1}^{k}|V(C'_{j})|}{2}.$$

Note that $T_{G} \cong T_{G_{2}}$.
Thus,
\begin{eqnarray*}
m(T_{G})&=&m(T_{G_{2}})\\
&=&m(G_{2}-O(G_{2}))\\
&=&m(G_{2})-\frac{\sum_{j=2}^{k}|V(C'_{j})|}{2}\\
&=&m(G_{1})-\frac{\sum_{j=2}^{k}|V(C'_{j})|}{2}\\
&=&m(G_{1})+\frac{|V(C'_{1})|}{2}-\frac{\sum_{j=1}^{k}|V(C'_{j})|}{2}\\
&=&m(G)-m(C'_{1})+\frac{|V(C'_{1})|}{2}-\frac{\sum_{j=1}^{k}|V(C'_{j})|}{2}\\
&=&m(G)-\frac{\sum_{j=1}^{k}|V(C'_{j})|}{2}\\
&=&m(G-O(G)).
\end{eqnarray*}
This completes the proof. $\square$

\section{Proof of Theorem \ref{T40}.}
A complex unit gain graph $(G, \varphi)$ is said to be {\bf upper-optimal} if $r(G, \varphi)=2m(G)+c(G)$,
or equivalently, the complex unit gain graphs which attain the upper bound in Theorem \ref{T30}.
In this section, the properties of the complex unit gain graphs which are upper-optimal are characterized,
and the proof of Theorem \ref{T40} is given.

\begin{lemma} \label{L041}
Let $(G, \varphi)$ be a complex unit gain graph and $(G_{1}, \varphi), (G_{2}, \varphi), \cdots, (G_{k}, \varphi)$ be all connected components of $(G, \varphi)$. Then $(G, \varphi)$ is upper-optimal if and only if $(G_{j}, \varphi)$ is upper-optimal for each $j \in \{1, 2, \cdots, k \}$.
\end{lemma}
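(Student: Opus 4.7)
The plan is to mirror the argument of Lemma~\ref{L51}, using the same three ingredients: the rank is additive over connected components (Lemma~\ref{L16}(ii)), the matching number is trivially additive over components of a graph, and the cyclomatic number is additive over components because $c(G)=|E(G)|-|V(G)|+\omega(G)$ and all three terms split as a sum over the $(G_j,\varphi)$'s (with $\omega(G)=\sum_j \omega(G_j)=k$). These three additivity facts will do all the work.

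For the sufficiency direction I would simply assume $r(G_j,\varphi)=2m(G_j)+c(G_j)$ for every $j$, then sum these $k$ equalities and apply the three additivity identities to conclude $r(G,\varphi)=2m(G)+c(G)$.

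For the necessity direction I would argue by contradiction. Suppose some component, say $(G_1,\varphi)$, fails to be upper-optimal; by Theorem~\ref{T30} this gives the strict inequality $r(G_1,\varphi)<2m(G_1)+c(G_1)$, while for each $j\in\{2,\dots,k\}$ Theorem~\ref{T30} still yields $r(G_j,\varphi)\le 2m(G_j)+c(G_j)$. Summing over $j$ and invoking additivity of $r$, $m$, and $c$ gives $r(G,\varphi)<2m(G)+c(G)$, contradicting the hypothesis that $(G,\varphi)$ is upper-optimal.

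There is no real obstacle here; the only point worth double-checking is the additivity of $c(G)$ across components, which I would verify once from the defining formula $c(G)=|E(G)|-|V(G)|+\omega(G)$ so that the arithmetic in both directions is unambiguous. The proof is essentially a verbatim reuse of the argument for Lemma~\ref{L51} with the lower bound $2m(G)-2c(G)$ replaced by the upper bound $2m(G)+c(G)$ and all inequalities reversed accordingly.
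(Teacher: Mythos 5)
Your proposal is correct and follows essentially the same route as the paper: sufficiency by summing the componentwise equalities using the additivity of $r$, $m$, and $c$ over components (Lemma~\ref{L16}(ii)), and necessity by contradiction via Theorem~\ref{T30} applied to a non-optimal component. The paper's proof is exactly this argument, so nothing further is needed.
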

\begin{proof} (Sufficiency.) For each $i \in \{ 1, 2, \cdots, k \}$, one has that
$$r(G_{i}, \varphi)= 2m(G_{i})+c(G_{i}).$$
Then, by Lemma \ref{L16}, we have
\begin{eqnarray*}
r(G, \varphi)&=&\sum\limits_{j=1}^{k}r(G_{j}, \varphi)\\
&=&\sum\limits_{j=1}^{k}[2m(G_{i})+c(G_{i})]\\
&=&2m(G)+c(G).
\end{eqnarray*}

(Necessity.) Suppose to the contrary that there is a connected component of $(G, \varphi)$, say $(G_{1}, \varphi)$, which is not upper-optimal. By Theorem \ref{T30}, for each $ j \in \{ 2, 3, \cdots, k \}$, one has that
$$r(G_{j}, \varphi) \leq 2m(G_{j})+c(G_{j})$$
and
$$r(G_{1}, \varphi) < 2m(G_{1})+c(G_{1}).$$

Thus, we have
$$r(G, \varphi)=\sum\limits_{j=1}^{k}r(G_{j}, \varphi) <  2m(G)+c(G),$$
a contradiction.
\end{proof}

\begin{lemma} \label{L042} Let $u$ be a pendant vertex of a complex unit gain graph $(G, \varphi)$ and $v$ be the vertex which is adjacent to $u$. Let $(G', \varphi)=(G, \varphi)-\{ u, v \}$. Then, $(G, \varphi)$ is upper-optimal if and only if $v$ is not on any complex unit gain cycle of $(G, \varphi)$ and $(G', \varphi)$ is upper-optimal.
\end{lemma}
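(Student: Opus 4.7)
The plan is to reduce the claim to a direct computation using the pendant-vertex rank formula, the pendant-vertex matching formula, and the behavior of the cyclomatic number under vertex deletion. Specifically, by Lemma \ref{L13} applied to the pendant vertex $u$ with neighbour $v$, I get
$$
r(G,\varphi) \;=\; r(G',\varphi) + 2,
$$
and by Lemma \ref{L19} applied to the same pair, I get $m(G) = m(G') + 1$. So it suffices to control $c(G')$ in terms of $c(G)$.

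Next I analyze $c(G')$. Since $u$ is pendant it cannot lie on any cycle, so by Lemma \ref{L23}(i) we have $c(G-u) = c(G)$. Moreover, the cycles of $G-u$ are exactly the cycles of $G$, so $v$ lies on a cycle of $G-u$ iff $v$ lies on a cycle of $G$. Applying Lemma \ref{L23} to the vertex $v$ inside $G-u$, I obtain
$$
c(G') \;=\; c(G) \text{ if $v$ is not on any cycle of $(G,\varphi)$,}
\qquad
c(G') \;\le\; c(G)-1 \text{ otherwise.}
$$
In particular $c(G')\le c(G)$ always.

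For sufficiency, assume $v$ is not on any cycle of $(G,\varphi)$ and $(G',\varphi)$ is upper-optimal. Then $c(G')=c(G)$ and $r(G',\varphi) = 2m(G')+c(G')=2m(G')+c(G)$. Substituting into the pendant identity gives
$$
r(G,\varphi) = r(G',\varphi)+2 = 2m(G')+c(G)+2 = 2(m(G')+1)+c(G) = 2m(G)+c(G),
$$
so $(G,\varphi)$ is upper-optimal.

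For necessity, assume $(G,\varphi)$ is upper-optimal. Then
$$
r(G',\varphi) = r(G,\varphi)-2 = 2m(G)+c(G)-2 = 2m(G')+c(G).
$$
By Theorem \ref{T30} applied to $(G',\varphi)$ we have $r(G',\varphi)\le 2m(G')+c(G')$, so $c(G)\le c(G')$. Combined with $c(G')\le c(G)$ this forces $c(G')=c(G)$, which by the dichotomy above forces $v$ to lie outside every cycle of $(G,\varphi)$. The same equality then yields $r(G',\varphi)=2m(G')+c(G')$, i.e.\ $(G',\varphi)$ is upper-optimal.

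There is essentially no obstacle; the lemma is pure bookkeeping among the three quantities $r$, $m$, and $c$, and the only substantive input beyond the pendant identities is Lemma \ref{L23}, which makes the cyclomatic behaviour at $v$ responsible for the dichotomy. The subtle point worth stating carefully is the equivalence ``$v$ lies on a cycle of $G$'' iff ``$v$ lies on a cycle of $G-u$'', which is needed to translate Lemma \ref{L23} from $G-u$ back to $G$.
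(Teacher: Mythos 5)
Your proof is correct and follows essentially the same route as the paper's: the pendant identities of Lemmas \ref{L13} and \ref{L19}, the bound of Theorem \ref{T30} applied to $(G',\varphi)$, and the cyclomatic dichotomy at $v$. The only difference is that you spell out, via Lemma \ref{L23} applied to $v$ in $G-u$, why $c(G')=c(G)$ forces $v$ to lie outside every cycle --- a step the paper leaves implicit in its ``Obviously, $c(G')\le c(G)$'' --- which is a welcome clarification rather than a deviation.
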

\begin{proof}
(Sufficiency.) By the condition $v$ is not on any complex unit gain cycle of $(G, \varphi)$ and $(G', \varphi)$ is upper-optimal, one has that
$$c(G)=c(G'), r(G', \varphi)=2m(G')+c(G').$$
Then, by Lemmas \ref{L13} and \ref{L19}, we have
\begin{eqnarray*}
r(G, \varphi)&=&r(G', \varphi)+2\\
&=&2m(G')+c(G')+2\\
&=&2m(G)+c(G).
\end{eqnarray*}

(Necessity.) By Lemmas \ref{L13} and \ref{L19}, one has that
$$r(G, \varphi)=r(G', \varphi)+2, m(G')=m(G)+1.$$
By the condition $(G, \varphi)$ is upper-optimal, i.e., $r(G, \varphi)=2m(G)+c(G)$, we have
$$r(G', \varphi)=2m(G')+c(G).$$
It follows from Theorem \ref{T30} that
$$r(G', \varphi) \leq 2m(G')+c(G').$$
Obviously, $c(G') \leq c(G)$. Then we have
$$c(G)=c(G'), r(G', \varphi) = 2m(G')+c(G').$$
This completes the proof.
\end{proof}

\begin{lemma} \label{L043}
Let $(G, \varphi)$ be a complex unit gain unicyclic graph which contains the unique complex unit gain cycle $(C_{l}, \varphi)$.
Then $(G, \varphi)$ is upper-optimal if and only if $Re(\varphi(C_{l}, \varphi)) \neq 0$ and $l$ is odd, and $m(T_{G})=m(G-O(G))$, where $O(G)$ is the set of vertices in cycles of $(G, \varphi)$.
\end{lemma}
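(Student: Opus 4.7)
Since $(G, \varphi)$ is unicyclic, $c(G) = 1$, so being upper-optimal is precisely the assertion $r(G, \varphi) = 2m(G) + 1$. The plan is to prove the equivalence by induction on $|V(T_G)|$. For the base case $|V(T_G)| = 1$, the graph coincides with its unique cycle $(C_l, \varphi)$, so $m(T_G) = m(G - O(G)) = 0$ and condition (iii) is vacuous. Lemma \ref{L12} then gives $r(C_l, \varphi) = 2m(C_l) + 1$ precisely when $(C_l, \varphi)$ is of Type C or Type D, that is, when $l$ is odd and $Re(\varphi(C_l, \varphi)) \neq 0$, which is condition (ii) for a unicyclic graph.

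For the inductive step, assume $|V(T_G)| \geq 2$. Using that in the unicyclic setting a non-cyclic vertex $u$ satisfies $d_{T_G}(u) = d_G(u)$ (since $u$ has at most one neighbor on $C_l$), together with the fact that the tree $T_G$ has at least two leaves and only one cyclic vertex, we may pick a pendant vertex $u$ of $G$; let $v$ be its neighbor and set $(G', \varphi) = (G, \varphi) - \{u, v\}$. For necessity, Lemma \ref{L042} gives that $v \notin V(C_l)$ and that $(G', \varphi)$ is upper-optimal; the component $(G'_1, \varphi)$ of $(G', \varphi)$ containing $C_l$ is then unicyclic, and by Lemma \ref{L041} it is upper-optimal, with $|V(T_{G'_1})| < |V(T_G)|$. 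The induction hypothesis supplies conditions (ii) and (iii) for $(G'_1, \varphi)$; (ii) carries to $G$ because $C_l$ is unchanged, and for (iii) the hypothesis $u, v \notin V(C_l)$ yields $T_{G'} = T_G - \{u, v\}$ and $G' - O(G') = G - O(G) - \{u, v\}$, after which Lemma \ref{L19} together with the fact that every component $(G'_j, \varphi)$ with $j \geq 2$ is a tree gives
\[
m(T_G) = m(T_{G'_1}) + \sum_{j \geq 2} m(G'_j) + 1 = m(G'_1 - O(G'_1)) + \sum_{j \geq 2} m(G'_j) + 1 = m(G - O(G)).
\]

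For sufficiency, condition (iii) combined with Lemma \ref{L53} forces $v \notin V(C_l)$, so the same decomposition shows $(G'_1, \varphi)$ inherits both (ii) and (iii). The induction hypothesis then delivers $r(G'_1, \varphi) = 2m(G'_1) + 1$, while each other component $(G'_j, \varphi)$ is a tree satisfying $r(G'_j, \varphi) = 2m(G'_j)$ by Lemma \ref{L15}. Summing via Lemma \ref{L16} and then applying Lemmas \ref{L13} and \ref{L19} yields
\[
r(G, \varphi) = r(G', \varphi) + 2 = 2m(G') + 1 + 2 = 2m(G) + 1,
\]
as desired.

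The main obstacle is the matching-number bookkeeping: one must carefully track how $T_G$, $G - O(G)$, and the cyclic component $G'_1$ sit inside $G'$, so that the identity $m(T_G) = m(G - O(G))$ transfers cleanly to and from $m(T_{G'_1}) = m(G'_1 - O(G'_1))$ across the removal of the pendant edge $uv$. The edge case $|V(T_G)| = 2$ must also be handled separately, noting that then the unique pendant's neighbor lies on $C_l$, so by Lemma \ref{L042} and Lemma \ref{L53} both sides of the equivalence fail vacuously. The remaining cycle-type case analysis in the base case via Lemma \ref{L12} is routine but must be executed carefully to single out exactly Types C and D.
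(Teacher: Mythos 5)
Your proof is correct, but it takes a genuinely different route from the paper's. The paper proves sufficiency by a characteristic-polynomial computation: it uses Lemma \ref{L22} to exhibit an elementary subgraph of order $2m+1$, observes that every such subgraph must contain $(C_l,\varphi)$ as a component, and then evaluates the coefficient $a_{2m+1}$ via the determinant expansion (as in Lemma \ref{L540}) to conclude $a_{2m+1}\neq 0$ from $Re(\varphi(C_l,\varphi))\neq 0$; for necessity it extracts the condition on the cycle from $a_{2m+1}\neq 0$ and only proves $m(T_G)=m(G-O(G))$ by induction. You instead run a single induction on $|V(T_G)|$ for the entire equivalence, with the bare cycle as base case settled by Lemma \ref{L12} (Types C and D are exactly the odd cycles with $Re(\varphi(C_l,\varphi))\neq 0$), and an inductive step that deletes a pendant vertex and its neighbour, using Lemmas \ref{L042}, \ref{L041}, \ref{L53}, \ref{L13}, \ref{L19}, \ref{L15} and \ref{L16}; in each direction the standing hypothesis (upper-optimality via Lemma \ref{L042}, or condition (iii) via Lemma \ref{L53}) forces the neighbour off the cycle, so the pendant reduction always applies, and your matching bookkeeping for transferring (iii) between $G$ and the cyclic component $G'_1$ is sound. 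Your approach buys uniformity and avoids the determinant/sign bookkeeping entirely, pushing all spectral content into the already-proved Lemma \ref{L12}; the paper's computational route is less clean here but serves as the template that is reused verbatim in the sufficiency proof of Theorem \ref{T40}, where the graph is no longer unicyclic and a reduction all the way down to a single cycle is not available. One cosmetic point: the case where the chosen pendant's neighbour lies on the cycle is not really "vacuous" and is not confined to $|V(T_G)|=2$ --- it can occur for any $|V(T_G)|\ge 2$ --- but your argument already handles it, since in that situation both sides of the biconditional are false by Lemmas \ref{L042} and \ref{L53}.
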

\begin{proof}
(Sufficiency.) Let $P_{(G, \varphi)}(\lambda)=|\lambda I_{n}-H(G, \varphi)|=\lambda^{n}+a_{1} \lambda^{n-1}+ \cdots + a_{n}$
be the characteristic polynomial of $H(G, \varphi)$ and $m=m(G)$. By Theorem \ref{T30}, we just need to prove $a_{2m+1} \neq 0$.

Since $l$ is odd and $m(T_{G})=m(G-O(G))$, by Lemma \ref{L22}, we have
$$m(G)=m(C_{l})+m(G-O(G))$$
which is equivalent to
$$2m+1=l+2m(G-O(G)).$$
Let $M_{0}$ be a maximum matching of $G-O(G)$, then $|M_{0}|=m(G-O(G))$. Tt can be checked that the order of $M_{0} \cup C_{l}$ is $2m(G-O(G))+l=2m+1$. Then, $M_{0} \cup C_{l}$ is an elementary subgraph with $2m+1$ vertices. Since $2m+1$ is odd and $(G, \varphi)$ is an unicyclic graph, each elementary subgraph with $2m+1$ vertices must contain $(C_{l}, \varphi)$ as its component. By similar method with Lemma \ref{L540},
one has that
\begin{eqnarray*}
(-1)^{2m+1}a_{2m+1}&=&\sum\limits_{(U, \varphi) \in \mathscr{U}_{2m+1}} (-1)^{p(U, \varphi)} \cdot 2^{c(U, \varphi)} \cdot Re(\varphi(C_{l}, \varphi))\\
&=&\sum\limits_{(U, \varphi) \in \mathscr{U}_{2m+1}} (-1)^{\frac{2m+1-l}{2}} \cdot 2^{1} \cdot Re(\varphi(C_{l}, \varphi)) \neq 0,
\end{eqnarray*}
where $\mathscr{U}_{2m+1}$ is the set of all elementary subgraphs contains in $(G, \varphi)$ which have exactly $2m+1$ vertices. Moreover, $p(U, \varphi)$ and $c(U, \varphi)$ are the number of even cycles and the number of cycles of $(U, \varphi)$, respectively.

(Necessity.) By the condition $(G, \varphi)$ is upper-optimal, i.e., $r(G, \varphi) = 2m+1$, we have
$a_{2m+1} \neq 0$. By similar method with Lemma \ref{L540}, one has that there exists at least one elementary subgraph of order $2m+1$. Since $2m+1$ is odd and $(G, \varphi)$ is an unicyclic graph, each elementary subgraph of order $2m+1$ must contain the unique cycle $(C_{l}, \varphi)$ as its connected component.
Moreover, $l$ is odd and $Re(\varphi(C_{l}, \varphi)) \neq 0$.

Next, we show $m(T_{G})=m(G-O(G))$ by induction on $|V(T_{G})|$. Since $(G, \varphi)$ contains a cycle, $|V(T_{G})| \geq 1$.  If $|V(T_{G})|=1$, then $(G, \varphi) \cong (C_{l}, \varphi)$, the result follows trivially. Now one can suppose that $|V(T_{G})| \geq 2$. Then there exists a pendant
vertex $u$ of $T_{G}$ which is also a pendant vertex of $(G, \varphi)$. Let $v$ be the unique neighbour
of $u$ and $(G', \varphi)=(G, \varphi)-\{u, v  \}$. By Lemma \ref{L042}, $(G', \varphi)$ is also upper-optimal and $v$ is not on any complex unit gain cycle of $(G, \varphi)$. Since $|V(T_{G'})| < |V(T_{G})| $, by induction hypothesis, one has
$$m(T_{G'})=m(G'-O(G')).$$
By Lemma \ref{L19}, we have
$$m(T_{G})=m(T_{G'})+1=m(G'-O(G'))+1=m(G-O(G)).$$
The result follows.
\end{proof}

\begin{lemma} \label{L044}
Let $(G, \varphi)$ be a complex unit gain graph without pendant vertex and $c(G) \geq 2$. Then $(G, \varphi)$
is not upper-optimal.
\end{lemma}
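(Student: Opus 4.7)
The plan is to argue by contradiction, exploiting Lemma~\ref{L24} to bound the deficiency of $G$. Assume $(G,\varphi)$ is upper-optimal, so $r(G,\varphi)=2m(G)+c(G)$. Since trivially $r(G,\varphi)\le|V(G)|$, this would force the deficiency to satisfy $|V(G)|-2m(G)\ge c(G)$; the strategy is to derive the strict reverse inequality $|V(G)|-2m(G)\le c(G)-1$ from Lemma~\ref{L24}, which will contradict upper-optimality at once.

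Thus the bulk of the work is to verify the hypothesis of Lemma~\ref{L24}, namely that $c(G-u)\ge c(G)-2$ for every vertex $u$ lying on a cycle of $G$. I would derive this directly from upper-optimality. For such a $u$, Lemma~\ref{L14} gives $r((G,\varphi)-u)\ge r(G,\varphi)-2$. On the other hand, applying Theorem~\ref{T30} together with Lemma~\ref{L16}(ii) to the connected components of $(G,\varphi)-u$ yields $r((G,\varphi)-u)\le 2m(G-u)+c(G-u)$. Combining these bounds with Lemma~\ref{L17} (which gives $m(G-u)\le m(G)$) and with the assumed equality $r(G,\varphi)=2m(G)+c(G)$ produces exactly the inequality $c(G-u)\ge c(G)-2$.

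With this verified, the hypotheses of Lemma~\ref{L24}---connectedness of $G$, absence of pendant vertices, $c(G)\ge 2$, and the cyclomatic condition just established---are all in place. Lemma~\ref{L24} then delivers $|V(G)|-2m(G)\le c(G)-1$, so
\[
r(G,\varphi)\;\le\;|V(G)|\;\le\;2m(G)+c(G)-1\;<\;2m(G)+c(G),
\]
contradicting upper-optimality and completing the argument.

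The main (and essentially only) obstacle is the conceptual observation that upper-optimality itself forces the cyclomatic hypothesis of Lemma~\ref{L24}; once one thinks to couple Lemma~\ref{L14} with Theorem~\ref{T30} applied componentwise, the rest is a one-line comparison. A small bookkeeping subtlety is that $(G,\varphi)-u$ may be disconnected and may acquire pendant vertices even though $G$ has none, but neither phenomenon obstructs the use of Theorem~\ref{T30} or Lemma~\ref{L17}.
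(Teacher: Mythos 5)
Your proof is correct and follows essentially the same route as the paper: both arguments hinge on verifying the hypothesis $c(G-u)\ge c(G)-2$ of Lemma~\ref{L24} via the chain Lemma~\ref{L14} $+$ Theorem~\ref{T30} $+$ Lemma~\ref{L17}, and then deriving $r(G,\varphi)\le |V(G)|\le 2m(G)+c(G)-1$. The paper merely packages the first step as a separate contradiction case (some $u$ with $c(G-u)\le c(G)-3$) rather than deducing the cyclomatic condition directly from upper-optimality, which is the same computation read in the contrapositive.
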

\begin{proof} Assume that there exists a vertex $u$ of $(G, \varphi)$ such that $c(G-u) \leq c(G)-3$.
Suppose to the contrary that $(G, \varphi)$ is upper-optimal, by Lemmas \ref{L17} and \ref{L14} and Theorem \ref{T30}, one has that
\begin{eqnarray*}
c(G)&=&r(G, \varphi)-2m(G)\\
&\leq&r((G, \varphi)-u)+2-2m(G-u)\\
&\leq& c(G-u)+2\\
&\leq& c(G)-1,
\end{eqnarray*}
a contradiction.

Now suppose that one can suppose that for any vertex $u$, $c(G-u) \geq c(G)-2$.
By Lemma \ref{L24}, there are at most
$c(G)-1$ vertices of $(G, \varphi)$ which are not covered by its maximum matching. Then,
$$m(G) \geq \frac{|V(G, \varphi)|-c(G)+1}{2}.$$
Suppose to the contrary that $(G, \varphi)$ is upper-optimal. Then one has that
$r(G, \varphi)=2m(G)+c(G) \geq |V(G)|+1.$
This contradiction completes the proof of the lemma.
\end{proof}

\begin{lemma} \label{L045}
Let $a_{j}+b_{j}\boldsymbol{i}$ (here $\boldsymbol{i}$ is the imaginary number unit) be a complex number with $|a_{j}+b_{j}\boldsymbol{i}|=1$ and $a_{j} \neq 0$ for each $j \in \{ 1, 2, \cdots, k  \}$.
Then $\sum \prod_{j=1}^{k}x_{j}=2^{k}\prod_{j=1}^{k}a_{j}$, where $x _{j} \in \{a_{j}+b_{j}\boldsymbol{i}, a_{j}-b_{j}\boldsymbol{i}  \}$ and the sum of $\sum \prod_{j=1}^{k}x_{j}$ over all the $2^{k}$ different situations.
\end{lemma}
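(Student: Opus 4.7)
The plan is to establish this identity by a direct distributive expansion (equivalently, a one-line induction on $k$). The key observation is that summing a product over independent binary choices factors as a product of sums. More precisely, letting $x_j^+ = a_j + b_j\mathbf{i}$ and $x_j^- = a_j - b_j\mathbf{i}$, the collection of $2^k$ tuples $(x_1, x_2, \ldots, x_k)$ with $x_j \in \{x_j^+, x_j^-\}$ is exactly the Cartesian product $\prod_{j=1}^k \{x_j^+, x_j^-\}$. Consequently,
\[
\sum_{(x_1,\ldots,x_k)} \prod_{j=1}^k x_j \;=\; \prod_{j=1}^k (x_j^+ + x_j^-).
\]
Since $x_j^+ + x_j^- = (a_j + b_j\mathbf{i}) + (a_j - b_j\mathbf{i}) = 2a_j$ for each $j$, the right-hand side equals $\prod_{j=1}^k 2a_j = 2^k \prod_{j=1}^k a_j$, which is the desired identity.

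If a more pedestrian presentation is preferred, I would instead induct on $k$. The base case $k=1$ is immediate: $(a_1+b_1\mathbf{i}) + (a_1-b_1\mathbf{i}) = 2a_1$. For the inductive step, split the sum over the final coordinate $x_k$:
\[
\sum \prod_{j=1}^k x_j \;=\; \Bigl(\sum \prod_{j=1}^{k-1} x_j\Bigr)(x_k^+ + x_k^-) \;=\; 2^{k-1}\prod_{j=1}^{k-1} a_j \cdot 2a_k \;=\; 2^k \prod_{j=1}^k a_j,
\]
where the inductive hypothesis is applied to the parenthesised inner sum (which ranges over the $2^{k-1}$ tuples of the first $k-1$ coordinates, independently of the choice of $x_k$).

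There is essentially no obstacle here; the hypotheses $|a_j + b_j\mathbf{i}| = 1$ and $a_j \neq 0$ play no role in the algebraic identity itself and are presumably recorded only because they are the circumstances under which the lemma will later be invoked (namely, when the $x_j$'s arise as gains of cycles in Type~C or Type~D and $2^k \prod a_j$ must be shown to be nonzero). I would simply remark on this briefly after the computation rather than use the conditions in the proof.
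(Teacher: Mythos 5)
Your proposal is correct and matches the paper's argument: the paper proves the lemma by exactly the induction on $k$ you describe as the ``more pedestrian'' alternative, splitting the sum over the last coordinate as $\bigl(\sum\prod_{j=1}^{k-1}x_j\bigr)(a_k+b_k\boldsymbol{i})+\bigl(\sum\prod_{j=1}^{k-1}x_j\bigr)(a_k-b_k\boldsymbol{i})$. Your direct factorization into $\prod_{j=1}^k(x_j^{+}+x_j^{-})$ is just the closed form of the same idea, and your remark that the hypotheses $|a_j+b_j\boldsymbol{i}|=1$ and $a_j\neq 0$ are not needed for the identity itself is accurate.
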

\begin{proof} We argue by induction on $k$ to show the result. If $k=1$, then the result follows immediately.
Suppose that the result holds for any integer number $s < k$.
Then, one has that $\sum \prod_{j=1}^{k-1}x_{j}=2^{k-1}\prod_{j=1}^{k-1}a_{j}$.
Moreover, it can be checked that $$\sum \prod_{j=1}^{k}x_{j}=(\sum \prod_{j=1}^{k-1}x_{j})(a_{k}+b_{k}\boldsymbol{i})+(\sum \prod_{j=1}^{k-1}x_{j})(a_{k}-b_{k}\boldsymbol{i}).$$
Thus, by direct calculation, one has
$$\sum \prod_{j=1}^{k}x_{j}=2^{k}\prod_{j=1}^{k}a_{j}.$$
\end{proof}

Now, we give the proof of the main result of this section.

\noindent
{\bf The proof of Theorem \ref{T40}.}
%\begin{proof}
(Sufficiency.) Let $(G, \varphi)$ be a complex unit gain graph which satisfies all the conditions of (i)-(iii). Let
$P_{(G, \varphi)}(\lambda)=|\lambda I_{n}-H(G, \varphi)|=\lambda^{n}+a_{1} \lambda^{n-1}+ \cdots + a_{n}$
be the characteristic polynomial of $H(G, \varphi)$, $m(G)$ and $c(G)$ are simply written as $m$ and $c$, respectively.
By Theorem \ref{T30}, it suffices to show that $a_{2m+c} \neq 0$.

By Lemma \ref{L15}, we may assume that $(G, \varphi)$ contains at least one cycle.
By Lemma \ref{L22}, we have
$m(G)=\sum_{C \in \mathscr{L}(G, \varphi)}m(C)+m(G-O(G))$.
Let $(O_{1}, \varphi), (O_{2}, \varphi), \cdots, (O_{c}, \varphi)$ be all cycles of $(G, \varphi)$ and $M_{1}$ be a maximum
matching of $G-O(G)$. Then, it can be checked that $(\cup_{j=1}^{c} (O_{j}, \varphi)) \cup M_{1}$ is an elementary subgraph with order $2m+c$.
Consequently, the set of all elementary subgraphs with order $2m+c$ is not empty. Now suppose that $(U, \varphi)$ is an elementary subgraph of order
$2m+c$ with $(O_{i_{1}}, \varphi), (O_{i_{2}}, \varphi), \cdots, (O_{i_{k}}, \varphi), K_{2}^{1}, K_{2}^{2},  \cdots,  K_{2}^{q}$ as all of its connected components, where $(O_{i_{j}}, \varphi)$ ($j \in \{ 1, 2, \cdots, k \}$) denotes an odd cycle and $K_{2}^{h}$ ($h \in \{ 1, 2, \cdots, q \}$) denotes an edge. Obviously,
$$|V(O_{i_{1}})|+|V(O_{i_{2}})|+\cdots +|V(O_{i_{k}})|+2q=2m+c.$$

Note that $m \geq m(U, \varphi)$. Hence, one has that
$$m \geq \frac{|V(O_{i_{1}})|-1}{2}+\frac{|V(O_{i_{2}})|-1}{2}+\cdots+
\frac{|V(O_{i_{k}})|-1}{2}+q=\frac{2m+c-k}{2},$$
which implies that $k \geq c$, thus we have $k=c$. Therefore, each elementary subgraph of $(G, \varphi)$ with order $2m+c$ must contain all cycles of $(G, \varphi)$.

For each $j \in \{ 1, 2, \cdots, k  \}$, let $\varphi(C_{j}, \varphi)=a_{j}+b_{j}\boldsymbol{i}$ for some
direction of $(C_{j}, \varphi)$, then $\varphi(C_{j}, \varphi)=a_{j}-b_{j}\boldsymbol{i}$ for the
other direction of $(C_{j}, \varphi)$. By the condition (ii) and the definition of the complex unit gain graph, one has that $|a_{j}+b_{j}\boldsymbol{i}|=1$ and $a_{j} \neq 0$ for each $j \in \{ 1, 2, \cdots, k  \}$.
Since each elementary subgraph of $(G, \varphi)$ with order $2m+c$ must contain all cycles of $(G, \varphi)$, then
the number of edge components of each elementary subgraph of $(G, \varphi)$ with order $2m+c$ is a fixed number $\frac{2m+c-\sum_{i=1}^{k}|V(O_{i})|}{2}$.
By a similar discussion as Lemma \ref{L540}, one has that
$$(-1)^{2m+c}a_{2m+c}=(-1)^{\frac{2m+c-\sum_{i=1}^{k}|V(O_{i})|}{2}}\sum (\prod_{j=1}^{k}x_{j}),$$
where $x _{j} \in \{a_{j}+b_{j}\boldsymbol{i}, a_{j}-b_{j}\boldsymbol{i}  \}$ and the sum takes over all the different situations. By Lemma \ref{L045}, we have
$$(-1)^{2m+c}a_{2m+c}=(-1)^{\frac{2m+c-\sum_{i=1}^{k}|V(O_{i})|}{2}}=2^{k}(\prod_{j=1}^{k}a_{j}) \neq 0.$$

(Necessity.) We proceed by induction on the order $n$ of $(G, \varphi)$ to prove (i)-(iii). If $n=1$,
then (i)-(iii) hold trivially. Suppose that (i)-(iii) hold for all upper-optimal connected
complex unit gain graph of order smaller than $n$. Now, let $(G, \varphi)$ be an upper-optimal connected complex unit gain
graph of order $n \geq 2$.
If $c(G)=0$, then $(G, \varphi)$ is a complex unit gain tree and (i)-(iii) hold trivially. If $c(G)=1$, then $(G, \varphi)$ is a complex unit gain unicyclic graph and (i)-(iii) follow immediately from Lemma \ref{L043}. If $c(G) \geq 2$, then
by Lemma \ref{L044}, $(G, \varphi)$ has at least one pendant vertex. Let $u$ be a pendant vertex of $(G, \varphi)$ and $v$ be the unique neighbour of $u$. Denote $(G_{0}, \varphi)= (G, \varphi)- \{ u,  v \}$, then it follows from Lemma \ref{L042} that $v$ does not lie on any cycle of $(G, \varphi)$ and $(G_{0}, \varphi)$ is also upper-optimal. In view of Lemma \ref{L041}, we know that every connected component of $(G_{0}, \varphi)$ is upper-optimal. Applying induction hypothesis to every connected component of $(G_{0}, \varphi)$ yields each of the following:
\\
(c) the cycles (if any) of $(G_{0}, \varphi)$ are pairwise vertex-disjoint;
\\
(d) for each cycle (if any) $(C_{l}, \varphi)$ of $(G_{0}, \varphi)$, $Re((-1)^{\frac{l-1}{2}}\varphi(C_{l}, \varphi)) \neq 0$ and $l$ is odd;
\\
(e) $m(T_{G_{0}})=m(G-O(G_{0}))$, where $O(G_{0})$ is the set of vertices in cycles of $G_{0}$.

Note that all cycles of $(G, \varphi)$ belong to $(G_{0}, \varphi)$, then (i) and (ii) hold from (c) and (d) directly.
Moreover, it can be checked that $u$ is also a pendant vertex of $T_{G}$ (resp., $G-O(G)$) which is adjacent to $v$ and $T_{G_{0}}=T_{G}-\{ u, v \}$ (resp., $G_{0}-O(G_{0})=G-O(G)-\{ u, v \}$).
Thus, by Lemma \ref{L19} and assertion (e), one has $$m(T_{G})= m(T_{G_{0}})+1= m(G_{0}-O(G_{0}))+1=m(G-O(G)).$$
This completes the proof of Theorem \ref{T40}.
%\end{proof}

\section{Proof of Theorem \ref{T60}.}

%\red{The whole section is revised}

To prove Theorem \ref{T60}, we first establish the following
result.

\begin{lemma}\label{SL1}
Let $G=(V, E)$ be any simple graph with $V(G)=\{v_1, v_2, \cdots, v_n  \}$ and $A$ be any $n \times n$ matrix $(a_{i, j})$ such that $a_{i, j}=0$ whenever $v_{i}v_{j} \notin E(G)$.
%If $\det (A)\neq 0$,
For any proper subset $S$ of $V$, if $G-S$ is bipartite,
then $r(A) \leq 2m(G)+|S|$ holds.
\end{lemma}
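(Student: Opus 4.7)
The plan is to bound $r(A)$ by combinatorially analyzing a largest nonzero minor of $A$ in terms of the graph structure of $G$. Set $k=r(A)$; then there exist index sets $I,J\subseteq\{1,\ldots,n\}$ with $|I|=|J|=k$ such that $\det A[I,J]\neq 0$. Expanding this determinant as $\sum_{\sigma}\mathrm{sgn}(\sigma)\prod_{i\in I}a_{i,\sigma(i)}$, where $\sigma$ ranges over bijections $I\to J$, at least one summand must be nonzero, so there is a bijection $\sigma:I\to J$ with $a_{i,\sigma(i)}\neq 0$ for every $i\in I$; equivalently, $v_iv_{\sigma(i)}\in E(G)$ for every $i\in I$. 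Note that since $G$ is simple we have $a_{i,i}=0$, so $\sigma(i)\neq i$ for all $i$.

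Next I analyze $\sigma$ as a directed graph $D$ on the vertex set $I\cup J$ with arcs $i\to\sigma(i)$. Every vertex has in-degree at most $1$ and out-degree at most $1$, so $D$ decomposes into vertex-disjoint directed paths (with starts in $I\setminus J$ and ends in $J\setminus I$) and vertex-disjoint directed cycles (supported in $I\cap J$); some of these cycles may have length $2$, corresponding to a single undirected edge of $G$ traversed in both directions. Reading the decomposition back into $G$: a directed path with $\ell$ arcs yields an undirected path of length $\ell$ in $G$ contributing $\lceil\ell/2\rceil$ to a matching; a directed cycle of length $c\geq 3$ yields an undirected cycle contributing $\lfloor c/2\rfloor$; and a $2$-cycle contributes $1$ using a single edge. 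Because the components of $D$ are pairwise vertex-disjoint, these pieces assemble into a matching $M$ of $G$, giving $m(G)\geq|M|$. Doubling this inequality and using the elementary bounds $2\lceil\ell/2\rceil\geq\ell$ and $2\lfloor c/2\rfloor\geq c-1$ (with equality iff $c$ is odd), together with the identity $k=\sum_i\ell_i+\sum_j c_j+2q_2$ (where $q_2$ counts the $2$-cycles), I expect to obtain
$$2m(G)\geq k-N,$$
where $N$ is the number of directed cycles of odd length at least $3$ appearing in $D$.

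The final step is to bound $N\leq|S|$. Each odd cycle of $D$ of length $\geq 3$ projects to a genuine odd cycle of $G$; since $G-S$ is bipartite, every odd cycle of $G$ must meet $S$. Because the odd cycles produced by $D$ are pairwise vertex-disjoint, they must meet $S$ in pairwise distinct vertices, yielding $N\leq|S|$. Combining the two inequalities gives $k\leq 2m(G)+|S|$, as required. The delicate point I anticipate is the bookkeeping when $I\neq J$: unlike the Hermitian setting used elsewhere in the paper, here $\sigma$ need not be an involution, so $D$ genuinely can contain proper directed paths and cycles of any length $\geq 2$, and the $2$-cycles must be tracked separately from longer cycles since they correspond to a single undirected edge of $G$ rather than to a cycle. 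Once this accounting is done correctly, the argument goes through uniformly for arbitrary (non-Hermitian) matrices $A$ whose support is contained in $E(G)$.
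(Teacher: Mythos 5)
Your proposal is correct and follows essentially the same route as the paper's proof: extract a nonzero $r(A)\times r(A)$ minor, obtain a bijection $\sigma$ whose arcs have in- and out-degree at most one, decompose into paths and cycles, build a matching from the pieces, and absorb the deficit (one per odd cycle of length at least $3$) into $|S|$ via the bipartiteness of $G-S$ and the vertex-disjointness of the cycles. Your explicit separation of the $2$-cycles of $\sigma$ from longer directed cycles is in fact slightly more careful than the paper's corresponding step, where the subgraph $G_0$ is asserted to have exactly $s$ edges even though a $2$-cycle contributes two arcs but only one undirected edge; the final inequality is unaffected either way.
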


\begin{proof} Assume that $s=r(A)$.
Then $A$ contains an $s\times s$ sub-matrix $A_0$
such that $\det (A_0)\ne 0$.
Let $R=\{i_1,i_2,\cdots,i_s\}$ and  $C=\{j_1,j_2,\cdots,j_s\}$
be the set of row numbers and the set of column numbers of $A_0$
respectively.
Then,
\begin{equation} \label{ES1}
\det (A_0)
=\sum\limits_{\pi}{\rm{sgn}}(\pi)
\prod_{t=1}^{s}a_{i_t, j_{\pi(t)}},
\end{equation}
where the sum runs over all permutations
of $1, 2, \cdots, s$ and  ${\rm{sgn}}(\pi)$ is a number in $\{-1,1\}$.
As $\det (A_0)\ne 0$,
there exists a permutation $\pi$
of $1, 2, \cdots, s$ such that
$\prod_{t=1}^{s}a_{i_t, j_{\pi(t)}}
\neq 0$, i.e., $a_{i_t, j_{\pi(t)}} \neq 0$
for all $t= 1, 2, \cdots, s$.
By the definition of $A$,
$v_{i_t}$ is adjacent to $v_{j_{\pi(t)}}$ for all $t=1,2,\cdots,s$.

Let $E_0=\{v_{i_t}v_{j_{\pi(t)}}: t=1,2,\cdots,s\}\subseteq E$,
$G_0$ be the spanning subgraph with edge set $E_0$
and $W=\{(i_t,j_{\pi(t)}): t=1,2,\cdots,s\}$.

\noindent {\bf Claim 1}:
$G_0$ has exactly $s$ edges.

It follows from the definition of $G_0$.

\noindent {\bf Claim 2}:
For any $a\in \{1,2,\cdots,n\}$,
$|\{(a,j_p)\in W: 1\le p\le s\}|\le 1$
and  $|\{(i_p,a)\in W: 1\le p\le s\}|\le 1$.

If $(a,j_p)\in W$, then $a=i_t$ for some $t:1\le t\le s$ and $p=\pi(t)$.
As $i_1,i_2,\cdots,i_s$ are  pairwise distinct,
$t$ is unique.
As $\pi$ is a permutation of $1,2,\cdots,s$,  $p=\pi(t)$
is also unique, implying that
$|\{(a,j_p)\in W: 1\le p\le s\}|\le 1$ holds.
Similarly, if $(i_p,a)\in W$, then $a=j_{\pi(p)}$.
As $\pi$ is a permutation of $1,2,\cdots,s$
and $j_1,j_2,\cdots,j_s$ are  pairwise distinct,
$p$ is the unique number in $\{1,2,\cdots,s\}$ such that
$a=j_{\pi(p)}$.
Thus $|\{(i_p,a)\in W: 1\le t\le s\}|\le 1$ holds.

\noindent {\bf Claim 3}:
Each non-trivial component of $G_0$ is either a cycle or
a path of length at least $2$.

Note that
a component of $G_0$ is said to be trivial if it is an isolated
vertex of $G_0$.
To prove this claim,
it suffices to show that $\Delta(G_0)\le 2$ holds.
Suppose that $\Delta(G_0)\ge 3$.
%$d_{G_0}(v_q)\ge 3$ for some $q:1\le q\le n$.
Without loss of generality, assume that $d_{G_0}(v_1)\ge 3$ and
$v_1v_b\in E(G_0)$ for $b=2,3,4$.
Then $|\{(1,b),(b,1)\}\cap W|\ge 1$ for all $b=2,3,4$,
implying that
$$
|\{(1,j_p)\in W: 1\le p\le s\}|
+|\{(i_p,1)\in W: 1\le p\le s\}|\ge 3,
$$
which contradicts Claim 2.

\noindent {\bf Claim 4}:
For any $S\subset V$, if $G-S$ is bipartite, then
$s\le 2m(G)+|S|$.

As $s=|E_0|$, it suffices to show that $|E_0|\le 2m(G)+|S|$.
By Claim 3, each non-trivial component of $G_0$ is either a cycle
or a path.

If a cycle $C$ is a component of $G_0$, then either
$|E(C)|=2m(C)+1$ or $|E(C)|=2m(C)$,
where $|E(C)|=2m(C)+1$ if and only if $C$ is an odd cycle.

If a path $P$ is a component of $G_0$, then either
$|E(P)|=2m(P)-1$ or $|E(P)|=2m(P)$,
where $|E(P)|=2m(P)-1$ if and only if $|E(P)|$ is odd.
Thus,
\begin{equation} \label{ES2}
|E_0|\le 2m(G_0)+oc(G_0),
\end{equation}
where $oc(G_0)$ is the number of components in $G_0$
which are odd cycles.

Assume that $\alpha=oc(G_0)$ and $C_{1}, C_{2}, \cdots, C_{\alpha}$ are the
components of $G_0$ which are odd cycles.
Since $G-S$ is bipartite,
$S \cap V(C_{i}) \ne \emptyset$ holds for all $ i= 1, 2, \cdots, \alpha$, implying that $|S| \geq \alpha$.
Thus, by (\ref{ES2}),
\begin{equation} \label{ES3}
s\le 2m(G_0)+oc(G_0) \leq 2m(G)+\alpha \leq 2m(G)+|S|.
\end{equation}
Hence Claim 4 holds, and the result follows immediately.
\end{proof}

\iffalse
In this section, we present another sharp upper and lower bounds of the rank of the complex unit gain graph in terms of matching number and
other topological parameters.

\begin{proof} Note that
\begin{equation} \label{ES1}
\det (A)=\sum\limits_{\pi}{\rm{sgn}}(\pi)  \prod_{i=1}^{n}a_{i, \pi(i)},
\end{equation}
where the sum runs over all permutations of $1, 2, \cdots, n$. Assume that $\det (A)\neq 0$.  Then, there exists a permutation $\pi$
of $1, 2, \cdots, n$ such that $\prod_{i=1}^{n}a_{i, \pi(i)} \neq 0$, i.e., $a_{i, \pi(i)} \neq 0$ for all $i= 1, 2, \cdots, n$. By the
assumption of $A$, $v_{i}v_{\pi(i)}\in E(G)$ for all $i= 1, 2, \cdots, n$.

Let $F=\{ v_{i}v_{\pi(i)} :  i= 1, 2, \cdots, n \} \subset E(G)$. As $\pi$ is a permutation of $1, 2, \cdots, n$, the spanning subgraph
$G'=(V, F)$ has the property that each component is either $K_{2}$ or a cycle, implying that
\begin{equation} \label{ES2}
n=2m(G')+oc(G'),
\end{equation}
where $oc(G')$ is the number of components in $G'$ which are odd cycles.

Assume that $t=oc(G')$ and $C_{1}, C_{2}, \cdots, C_{t}$ are the odd cycles in $G'$. Since $G-S$ is bipartite,
$S \cap V(C_{i}) \neq 0$ holds for all $ i= 1, 2, \cdots, t$, implying that $|S| \geq t$. Thus, by (\ref{ES2}),
\begin{equation} \label{ES3}
n=2m(G')+oc(G') \leq 2m(G)+t \leq 2m(G)+|S|.
\end{equation}
\end{proof}
\fi

Let $b(G)$ be the minimum integer $|S|$ such that $G-S$ is bipartite for $S \subset V(G)$.
We are now ready to prove Theorem~\ref{T60}.

\vspace{0.3 cm}

{\bf The proof of Theorem \ref{T60}:}
%Observe that $b(G') \leq b(G)$ holds for subgraph $G'$ of $G$.
%By Lemma \ref{SL1}, the following conclusion is obtained,
Let $V_0$ be any proper subset of $V$ such that
$G-V_0$ is acyclic.
By Lemmas~\ref{L14} and~\ref{L15},
$r(G,\varphi)\ge r(G-V_0,\varphi)=2m(G-V_0)$.
Thus, the lower bound of $r(G,\varphi)$ in Theorem~\ref{T60} holds.
The upper bound of $r(G,\varphi)$ in Theorem~\ref{T60}
follows directly from Lemma~\ref{SL1}.
This completes the proof of Theorem \ref{T60}.
%\hfill $\Box$

\iffalse
\begin{corollary}\label{SC1}
Let $G=(V, E)$ be any simple graph with $V(G)=\{ v_{1}, v_{2}, \cdots, v_{n}  \}$ and $A$ be any $n \times n$ matrix $(a_{i, j})$ such that $a_{i, j}=0$ whenever $v_{i}v_{j} \notin E(G)$.
Then $r(A)\leq 2m(G)+b(G)$.
\end{corollary}

\begin{lemma}\label{ES4}
Let $G$ be a simple graph.
Then $$m(G-V_0) \geq m(G)-c(G),$$
where $V_0$ is a minimal vertex set of $G$ such that $G-V_0$ is acyclic.
\end{lemma}

\begin{proof}
It is obvious that $|V_0| \leq c(G)$. By Lemma \ref{L17}, one has that
$m(G-V_0) \geq m(G)-|V_0|\geq m(G)-c(G).$\end{proof}

From Lemma \ref{L15}, one has that $r(G, \varphi) \geq r(G-V_{0})=2m(G-V_{0})$, where $V_0$ is
a minimal vertex set of a connected complex unit gain graph $(G, \varphi)$ such that $G-V_0$ is acyclic. Thus, by Corollary \ref{SC1}, the
Theorem \ref{T60} can be derived immediately.
\fi

\iffalse
\begin{theorem}\label{T601}
For any connected complex unit gain graph $\Phi=(G,{\mathbb T},\varphi)$, one has
$$
2m(G-V_{0}) \leq r(G, \varphi) \leq 2m(G)+b(G).
$$
Where $b(G)$ is the minimum integer such that $G-S$ is bipartite for some $S \subset V(G)$ with $|S|=b(G)$ and $V_0$ is
a minimal vertex set of $G$ such that $G-V_0$ is acyclic.
\end{theorem}
\fi

\vskip0.2cm

$\mathbf{Remark.}$ In this paper, a lower bound and an upper bound for
$r(G,\varphi)$ in terms of $c(G)$ and $m(G)$ are obtained respectively.
Moreover, the properties of the extremal graphs which attain the lower and upper bounds are investigated.
Theorems~\ref{T30}, \ref{T50}, \ref{T40} generalize the corresponding results about undirected graphs, mixed graphs and signed graphs, which were obtained in \cite{WANGLONG}, \cite{LSC} and \cite{HSJ}, respectively.

It is not difficult to prove that $b(G)\le c(G)$ holds and
there exists $V_0\subseteq V(G)$ such that $m(G)-c(G)\le m(G-V_0)$.
Thus the lower bound and the upper bound of Theorem \ref{T60} are
better than the corresponding bounds of Theorem \ref{T30}.
The following is an example which compares the two results.

For the graph $G$ in Figure 1, $m(G)=3$ and $c(G)=2$.
By Theorem \ref{T30}, we have $r(G,\varphi)\le 2\times 3+2=8$
and $r(G,\varphi)\ge 2\times 3-2\times 2=2$.
As $G$ is bipartite, $b(G)=0$.
For $V_0=\{u,v\}$, $m(G-V_0)=3$.
Thus, $2m(G-V_0)=6=2m(G)-b(G)$, implying that $r(G,\varphi)=6$ by
Theorem~\ref{T60}.

\begin{center}   \setlength{\unitlength}{0.7mm}
\begin{picture}(45,60)

\put(25,10){\circle*{2}}
\put(25,30){\circle*{2}}
\put(-15,30){\circle*{2}}
\put(65,30){\circle*{2}}
\put(5,50){\circle*{2}}
\put(5,10){\circle*{2}}
\put(45,50){\circle*{2}}
\put(45,10){\circle*{2}}

\put(25,30){\line(0,-1){20}}
\put(25,30){\line(1,1){20}}
\put(25,30){\line(1,-1){20}}
\put(25,30){\line(-1,1){20}}
\put(25,30){\line(-1,-1){20}}

\put(-15,30){\line(1,-1){20}}
\put(-15,30){\line(1,1){20}}
\put(65,30){\line(-1,1){20}}
\put(65,30){\line(-1,-1){20}}

\put(4,52){$u$}
\put(44,52){$v$}

\put(-3,-5){Figure 1. The graph $G$}

\end{picture} \end{center}
\vskip0.3cm

Although the bounds for $r(G,\varphi)$
in Theorem \ref{T60} are better than the
corresponding bounds in  Theorem \ref{T30},
%it is the minimal vertex set $V_0$ of $(G, \varphi)$ such that $G-V_0$ is acyclic is more difficult to find than $b(G)$, especially for densely graphs. Furthermore,
it is difficult to characterize all the extremal graphs that achieve the lower bound and upper bound in Theorem \ref{T60}.
For example, the extremal graphs that achieve the lower bound
and upper bound of Theorem \ref{T30} also satisfy the corresponding lower and upper bounds of Theorem \ref{T60}; some bipartite graphs such as trees achieve the lower bound of Theorem \ref{T60}, but some bipartite graphs such as 4-cycle and 8-cycles do not. Furthermore, the graph obtained by identifying a vertex of two 3-cycle (or a 3-cycle and a 7-cycle) achieves the upper bound of Theorem \ref{T60}, but
the graph obtained by identifying a vertex of a 3-cycle and a 5-cycle does not. It would be meaningful to characterize all the extremal graphs that achieve the lower bound and upper bound of Theorem \ref{T60}.

\section*{Acknowledgments}

This work was supported by the National Natural Science Foundation of China
(Nos. 11971054, 11731002) and the 111 Project of China (B16002).

\end{document}